%% file: main.tex
\documentclass[final,a4paper, 11pt]{article}
\RequirePackage{etex} 
\usepackage{authblk}
\usepackage{blindtext}
\usepackage{comment}
\usepackage[utf8]{inputenc}
\usepackage{amsmath}
\usepackage{amsthm}
\usepackage{amsfonts}
\usepackage{lipsum}
\usepackage{amssymb}
\usepackage{mathrsfs}
\usepackage{empheq}
\usepackage{authblk}
\usepackage{mathtools}
\usepackage{siunitx}
\usepackage{soul} 
\usepackage{amsbsy}
\usepackage{braket}
\usepackage[font=footnotesize]{caption}
\usepackage[export]{adjustbox}
\usepackage{xcolor}
\usepackage{multicol}
\usepackage{multirow}
\usepackage{graphicx}
\graphicspath{{IMG/}}
\usepackage{tikz}
\usepackage{pgfplots}
\pgfplotsset{compat=1.6}
\usepackage{placeins}
\usepackage[english]{babel}
\usepackage[margin=0.8in]{geometry}

\usepackage{float}
\usepackage{longtable}
\usepackage{colortbl}
\usepackage{subcaption}
\usepackage{listings}
\usepackage{hyperref}
\usepackage{cleveref}
\usepackage{autonum}
\usepackage{realboxes}
\theoremstyle{plain}
\newtheorem{definition}{Definition}
\numberwithin{definition}{section}

\newtheorem{assumption}{Assumption}
\numberwithin{assumption}{section}

\newtheorem{theorem}{Theorem}
\numberwithin{theorem}{section}

\numberwithin{proposition}{section}
\theoremstyle{plain}
\newtheorem{remark}
{Remark}
\theoremstyle{plain}
\newtheorem{lemma}{Lemma}[section]
\usepackage{tikz}

\usepackage{enumitem}
\usepackage[makeroom]{cancel}
\usepackage{algorithm}
\usepackage{algpseudocode}

\usepackage[square, numbers, sort&compress]{natbib} 
\newcommand\T{\rule{0pt}{2.6ex}}
\newcommand\B{\rule[-1.2ex]{0pt}{0pt}}
\newcommand{\jump}[1]{\left[\mkern-1.5mu \left[#1\right] \mkern-1.5mu\right]}
\newcommand{\avg}[1]{\left\{ \mkern-5mu \left\{#1 \right\} \mkern-5mu \right\}}
\newcommand{\wavg}[1]{\left\{ \mkern-5mu \left\{#1 \right\} \mkern-5mu \right\}_{\omega}}
\newcommand{\wTavg}[1]{\left\{ \mkern-5mu \left\{#1 \right\} \mkern-5mu \right\}_{\omega_{\boldsymbol{\Theta}}}}
\newcommand{\wPavg}[1]{\left\{ \mkern-5mu \left\{#1 \right\} \mkern-5mu \right\}_{\omega_{\mathbf{K}}}}
\newcommand{\wUavg}[1]{\left\{ \mkern-5mu \left\{#1 \right\} \mkern-5mu \right\}_{\omega_{\mu}}}

\renewcommand{\div}[1]{\nabla \mkern-2.5mu \cdot \mkern-2.5mu {#1}}
\newcommand{\divh}[1]{\nabla_h \mkern-2.5mu \cdot \mkern-2.5mu {#1}}

\newcommand\nnfootnote[1]{%
  \begin{NoHyper}
  \renewcommand\thefootnote{}\footnote{#1}%
  \addtocounter{footnote}{-1}%
  \end{NoHyper}
}

\definecolor{myred}{rgb}{0.9, 0.0, 0.0}
\definecolor{myblue}{rgb}{0.0, 0.28, 0.67}
\definecolor{mygreen}{rgb}{0.0, 0.7, 0.0}
\definecolor{myyellow}{rgb}{1.0, 0.55, 0.0}

\begin{document}

\title{\textbf{Splitting strategies for the fully-coupled nonlinear thermo-hydro-mechanical problem}}
\author[a]{Stefano Bonetti\textsuperscript{*,}}
\author[a]{Michele Botti}
\author[a]{Paola F. Antonietti}
\affil[a]{\small{\textit{MOX, Dipartimento di Matematica, Politecnico di Milano, P.zza Leondardo da Vinci 32, 20133 Milano, Italy.}}}
\date{}

\maketitle

\begin{center}
\begin{minipage}[c]{0.8\textwidth}
We propose novel semi-decoupled and fully-decoupled iterative algorithms for efficiently solving the fully-coupled nonlinear four-field thermo-poroelastic model discretized in space by discontinuous Galerkin method on polytopal grids. We present the model problem, its four-field formulation, and the arbitrary-order weighted
symmetric interior penalty scheme exploited for its spatial discretization. Such a scheme is robust with respect to strong heterogeneities in the model coefficients. Then, we present the two solution strategies and prove that under suitable conditions both schemes are convergent. A wide set of numerical simulations is presented to assess the convergence and robustness properties of the proposed method. Moreover, we test the scheme with literature and physically sound test cases for \textit{proof-of-concept} applications in the geophysical context.
\medskip

\textbf{Key-words:}
iterative splitting schemes, polytopal discontinuous Galerkin, thermo-hydro-mechanics, nonlinear pdes, geophysics.
\medskip

\textbf{MSC:} 65M12, 65M60, 35L05, 74F05, 76S05
\end{minipage}
\end{center}

\nnfootnote{* Corresponding author}
\nnfootnote{\textit{E-mail addresses:} \texttt{stefano.bonetti@polimi.it} (Stefano Bonetti), \texttt{michele.botti@polimi.it} (Michele Botti), \texttt{paola.antonietti@polimi.it} (Paola F. Antonietti)}

\section{Introduction}
\label{sec:Introduction}
Modeling the thermo-poroelastic (TPE) coupling -- that refers to the interactions between temperature, fluid flow, and mechanical deformations -- is of crucial importance for modeling many natural and engineered systems. Some notable applications in which it turns out to be necessary to take into account the effects of the temperature to the classical poroelastic equations (cf. \cite{Biot1941, Terzaghi1943}) belong to the fields of environmental sustainability, civil engineering, and materials science; with direct impact also on public safety and economy. Relevant examples include, e.g., $CO_2$ sequestration, geothermal energy production, and seismicity/induced seismicity studies.

In the context of geophysical applications, as the ones previously mentioned, the subsoil is often modeled as a fully-saturated poroelastic material; then, the TPE problem is often formulated starting from the poroelasticity theory, which describes how the fluid flow and the elastic deformations interact within a porous medium. The temperature variation plays a key role in the description of the physical phenomena and for understanding its evolution. Thence, the temperature field is added to the physical model via an energy conservation equation that leads to a fully-coupled TPE system of equations \cite{Brun2019, Brun2020, AntoniettiBonetti2022}. The introduction of the energy conservation equation means that, within the classic equations of mass and momentum conservation, a reaction term and a contribution to the volumetric component of stress are added, respectively. Last, the energy conservation for the temperature field is similar to the mass conservation equation for the pressure, but with the presence of a nonlinear convective term, which considers the product between the temperature gradient and the filtration velocity. To ensure inf-sup stability of the problem, we consider the four-field formulation of the problem \cite{AntoniettiBonetti2022, Bonetti2024} obtained by introducing one additional scalar equation that is solved in the so-called total pressure auxiliary variable \cite{Oyarzua2016}. Moreover, this formulation gives the robustness of the scheme for quasi-incompressible media, i.e. with respect to volumetric locking. Alternative formulations of the TPE problem are discussed, e.g., in \cite{VanDuijn2019}  under different assumptions on the deformations rates and in \cite{Gatmiri1997,Lee1997} where the nonlinear convective term is neglected as it is assumed that the energy is balanced only by conduction. Depending on the different applications, in the literature the TPE problem can be found in the quasi-static regime, where the small deformations regime is assumed \cite{Brun2019, Brun2020, AntoniettiBonetti2022}, or in its fully-dynamic form \cite{Bonetti2023}, where seismic effects are the main focus. In this  work, we consider the problem in its steady formulation \cite{Bonetti2024}, that can be seen as one step of an implicit time advancing scheme (e.g. backward Euler method) applied to the quasi-static problem. We remark that, in this case, for establishing a connection between the steady problem and the quasi-static one, the physical parameters must be scaled by the time-step. 

For the spatial discretization of the problem, we propose a discontinuous Galerkin finite element method on polytopal grids (PolyDG \cite{Cangiani2014}). PolyDG schemes are appealing in this context because their geometrical flexibility facilitates local mesh refinement and coarsening and allow to easily handle highly heterogeneous media by better representing inner discontinuities. PolyDG schemes are also suited for arbitrary-order approximations and $p$-adaptivity. To further enhance robustness with respect to heterogeneities we consider a Weighted Symmetric Interior Penalty (WSIP) version of the PolyDG scheme, in which weighted averages \cite{Heinrich2002, Heinrich2003, Stenberg1998} are introduced in place of the standard averages operators of the discontinuous Galerkin methods \cite{Ern2009}. Examples of PolyDG schemes can be found in  \cite{Antonietti2013, Bassi2012, Antonietti2019} for elliptic problems,  in \cite{Cangiani2017} for parabolic problems, and in   \cite{Antonietti.Botti.ea:21} for poroelasticity. In \cite{AntoniettiBonetti2022, Bonetti2023, Bonetti2024} and in \cite{Bonetti2025} a PolyDG method for thermo-hydro-mechanics and thermo/poro-viscoelasticity is analyzed, respectively. 
In the literature, other discretization strategies for the TPE and closely related models in geophysics include, e.g., Finite Volume methods \cite{Berge2020, Stefansson2020, Schneider2017, Stefansson2021}, Hybrid Finite Volume method \cite{Droniou2024},  Hybrid High-Order \cite{BottiDiPietro2020}, Hybridizable Discontinuous Galerkin \cite{Fu2019}, lowest order Raviart-Thomas coupled with Lagrangian finite element methods \cite{Brun2020}, and eXtended finite elements \cite{Jafari2022}.
\smallskip

When solving a multiphysics differential system, it is very important to design the right strategy for its algebraic solution. In fact, these problems are often associated with large-scale issues, especially when considering realistic simulations in three-dimensional settings. Considering the monolithic resolution of the system in these contexts can lead to very large, ill-conditioned, broadband linear systems and, in three-dimensional (3D) cases, could also lead to memory problems. For this reason, a valid candidate for the numerical solution consists in a iterative coupling scheme, in which one (or more) problems are solved separately, in parallel or sequentially. These problems are solved faster, as they are smaller in size and generally have better properties (e.g. symmetric positive definite) than the monolithic system, which usually exhibits a perturbed saddle-point structure. Given the simpler structure of the different subproblems, it is also simpler to design effective (possibly robust) preconditioning techniques for them rather then for the whole system. The drawback of such a solution strategy is that an iterative procedure -- along with a suitable stopping criterion -- is introduced. However, since the TPE problem considered in this paper is nonlinear, an iterative strategy has to be introduced even when a monolithic approach is used. In previous works \cite{Brun2019, AntoniettiBonetti2022} the nonlinear transport term has been addressed with a fixed-point iterative strategy, and in \cite{Bonetti2024} a robust linearization of the advection term has been proposed. 
\smallskip

In this work, we propose and analyze two novel splitting schemes (one partially-decoupled and one fully-decoupled) where the pressure equation is always decoupled from the energy equation. In this way, at each iteration of the splitting scheme, we need to solve three linear problems, without the need to introduce an additional linearization procedure. In the context of geophysics, iterative decoupling strategies have been extensively studied in the literature for the poroelasticity problem \cite{Ahmed2020, Both2017, Castelletto2015, Iliev2016, Kim2009, Kolesov2017, Mikelic2013, Mikelic2014, Tran2005}; moveover, in \cite{Kolesov2014} a strategy for decoupling the thermoelasticity problem is investigated. For what concerns the TPE problem, in \cite{Brun2020} different splitting strategies have been studied for the nonlinear problem in its five-fields formulation, while in \cite{Cai2025, Li2025} partially-decoupled algorithms have been studied for the four-field formulation of the linear problem discretized by stable pairs of Lagrangian finite elements. The major highlights of this paper are: \textit{(a)} novel partially-decoupled and fully-decoupled solution strategies for the nonlinear TPE prolem \textit{(b)} a convergence analysis of the corresponding numerical strategies based on top of a PolyDG-WSIP discretization of the problem \textit{(c)} a numerical verification of the convergence and robustness properties of the proposed methods both in a two- and three-dimensional setting; and \textit{(d)} a verification test where we apply the schemes in a physically-sound test case (inspired by geothermal energy production procedures \cite{AntoniettiBonetti2022}) in a three-dimensional setting, showing the applicability of these schemes for real-life applications.
\smallskip

The rest of the paper is organized as follows: the model problem, the assumptions on the data, and the four-field formulation are presented in Section~\ref{sec:ModelProblem}. In Section~\ref{sec:Discretization}, we design the PolyDG-WSIP spatial discretization. In Section~\ref{sec:SolutionStrategy}, we introduce the solution strategies for the four-field TPE problem and in Section~\ref{sec:ConvMainResults} we report the main results regarding the stability of the discrete problems and the convergence of the method. Then, in Section~\ref{sec:NumericalResults} we present numerical results assessing the convergence and robustness properties of the splitting algorithms, while in Section~\ref{sec:geo3D} we present the numerical results related to the application of the proposed schemes to a physically-inspired geothermal model problem in three-dimensions. Last, in Section~\ref{sec:ConvProof}, we report the preliminaries, the auxiliary results, and the proof of the theoretical results presented in Section~\ref{sec:ConvMainResults}.

\section{Model Problem}
\label{sec:ModelProblem}
Let $\Omega \subset \mathbb{R}^d$, $d \in \{2,3\}$, be an open bounded Lipschitz polygonal/polyhedral domain. We consider the coupled thermo-hydro-mechanical problem reading: \textit{find $(\mathbf{u}, p, T)$ such that it holds} 
\begin{subequations}
	\label{eq:TPE_system}
	\begin{empheq}[left=\empheqlbrace]{align}
		& a_0 T - b_0 p + \beta \div{\mathbf{u}} - c_f \nabla T \cdot (\mathbf{K} \nabla p) - \div{(\boldsymbol{\Theta} \nabla T )} = H && \text{in} \ \Omega, \label{eq:TPE_energy_cons} \\
		& c_0 p - b_0 T + \alpha \div{\mathbf{u}} - \div{(\mathbf{K} \nabla p)} = g && \text{in} \ \Omega, \label{eq:TPE_mass_cons} \\
		& -\div{\boldsymbol{\sigma}(\mathbf{u},p,T)} = \mathbf{f} && \text{in} \ \Omega. \label{eq:TPE_momentum_cons}
	\end{empheq}
\end{subequations}
Here $T$ represents the variation of the temperature with respect to a reference value \cite{Coussy2003} and $\mathbf{u}$, $p$ represent the solid displacement and the pressure, respectively. The source terms $H$, $g$, $\mathbf{f}$ are a heat source, a fluid mass source, and a body force, respectively. 
The constitutive law for the stress tensor $\boldsymbol{\sigma}$ (in \eqref{eq:TPE_momentum_cons}) in the linear elasticity regime is given by
\begin{equation}
	\label{eq:TPE_const_law_stress}
	\boldsymbol{\sigma}(\mathbf{u},p,T) = 2 \mu \boldsymbol{\epsilon}(\mathbf{u}) + \lambda \div{\mathbf{u}} \mathbf{I} - \alpha p \mathbf{I} - \beta T \mathbf{I}, 
\end{equation}
where $\mathbf{I} \in \mathbb{R}^{d \times d}$ is the identity tensor and $\boldsymbol{\epsilon}(\mathbf{u})= \frac{1}{2}(\nabla \mathbf{u} + \nabla \mathbf{u}^T)$ is the strain tensor. For the sake of simplicity, we supplement \eqref{eq:TPE_system} 
with homogeneous Dirichlet conditions, namely $\mathbf{u} = \boldsymbol{0}$, $p = 0$, and $T=0$ on $\partial\Omega$. More general boundary can be considered up to minor modifications in the variational formulation, see for instance the ones considered in the numerical validation of Section~\ref{sec:geo3D} below.

In Table~\ref{tab:TPE_params} we detail the parameters characterizing problem \eqref{eq:TPE_system}-\eqref{eq:TPE_const_law_stress} specifying their physical interpretation and their corresponding unit of measure. For a detailed discussion on the parameters and the relations among them we refer to \cite{AntoniettiBonetti2022}.
\begin{table}[ht]
	\centering 
	\footnotesize
	\begin{tabular}{ c | c | l }
		\textbf{Notation} & \textbf{Quantity} & \textbf{Unit} \\[3pt]
		$a_0$ & thermal capacity & \si[per-mode = symbol]{\pascal \per \kelvin \squared} \\
		$b_0$ & thermal dilatation coefficient & \si{\per \kelvin} \\
		$c_0$ & specific storage coefficient & \si{\per\pascal} \\
		$\alpha$ & Biot--Willis constant & - \\
		$\beta$ & thermal stress coefficient & \si[per-mode = symbol]{\pascal \per \kelvin} \\
		$c_f$  & fluid volumetric heat capacity divided by reference temperature & \si[per-mode = symbol]{\pascal \per \kelvin\squared} \\ 
		$\mu, \lambda$ & Lamé parameters & \si{\pascal} \\
		$\tilde{\mathbf{K}}$ & permeability divided by fluid viscosity & \si[per-mode = symbol]
		{\metre\squared \per \pascal \per \second} \\
		$\tilde{\boldsymbol{\Theta}}$ & effective thermal conductivity & \si[per-mode = symbol]{\metre\squared \pascal \per \kelvin\squared \per \second} \\
		$\phi$ & porosity & - \\
	\end{tabular}
	\caption{Thermo-hydro-mechanics coefficients appearing in \eqref{eq:TPE_system}, \eqref{eq:TPE_const_law_stress}.}
	\label{tab:TPE_params}
\end{table}

\begin{remark}
	Problem~\ref{eq:TPE_system} can be seen as one step of an implicit time advancing scheme (e.g. backward Euler method) applied to the quasi-static problem \cite{AntoniettiBonetti2022}. In this case, the conductivity tensors $\mathbf{K}$ in \eqref{eq:TPE_energy_cons} and $\boldsymbol{\Theta}$ in \eqref{eq:TPE_mass_cons} are scaled by the time-step $\delta t$, namely $\mathbf{K}=\delta t \tilde{\mathbf{K}}$ and $\boldsymbol{\Theta}=\delta t \tilde{\boldsymbol{\Theta}}$, where $\tilde{\mathbf{K}}$ and $\tilde{\boldsymbol{\Theta}}$ are the actual hydraulic mobility and heat conductivity of the medium, respectively. For a detailed derivation of the quasi-static model we refer the reader to \cite{Brun2018}.    
\end{remark}

\subsection{Notation and assumptions}
For $X\subseteq\Omega$, we denote by $L^p(X)$ the standard Lebesgue space of index $p\in [1, \infty]$ and by $H^q(X)$ the Sobolev space of index $q \geq 0$ of real-valued functions defined on $X$, with the convention that $H^0(X)=L^2(X)$. 
The notation $\mathbf{L}^2(X)$ and $\mathbf{H}^q(X)$ is adopted in place of $\left[ L^2(X) \right]^d$ and $\left[ H^q(X) \right]^d$, respectively. 
In addition, we denote by $\mathbf{H}(\textrm{div},X)$ the space of $\mathbf{L}^2(X)$ vector fields whose divergence is square integrable. These spaces are equipped with natural inner products and norms denoted by $(\cdot, \cdot)_X = (\cdot, \cdot)_{L^2(X)}$ and $||\cdot||_X = ||\cdot||_{L^2(X)}$, with the convention that the subscript can be omitted in the case $X=\Omega$.
For the sake of brevity, in what follows, we make use of the symbol $x \lesssim y$ to denote $x \le C y$, where $C$ is a positive constant independent of the discretization parameters.
\smallskip

Following \cite{Brun2019}, we introduce suitable assumptions on the problem's data:
\begin{assumption}[Assumptions on the problem data]
	\hspace{0pt}
	We assume that:
	\label{assumption:TPE_model_problem}
	\begin{enumerate}
		\item the hydraulic mobility $\textup{\textbf{K}}= [K_{ij}]^{d \times d}_{i,j=1}$ and heat conductivity $\boldsymbol{\Theta}=[\Theta_{ij}]^{d \times d}_{i,j=1}$ are symmetric tensor fields which, for strictly positive real numbers $k_M>k_m$ and $\theta_M> \theta_m$, satisfy
		\begin{equation}
			k_m |\zeta|^2 \leq \zeta^T \textup{\textbf{K}}(x)\zeta \leq k_M |\zeta|^2
			\quad \text{and} \quad
			\theta_m |\zeta|^2 \leq \zeta^T \boldsymbol{\Theta}(x) \zeta \leq \theta_M |\zeta|^2 \quad\forall\zeta \in \mathbb{R}^d, \ \text{a.e.} \ x \in \Omega;
		\end{equation}
		\item The shear modulus $\mu$ and the fluid heat capacity $c_f$ are scalar fields such that $\mu:\Omega\to[\mu_m,\mu_M]$ and $c_f:\Omega\to[0, c_{fM}]$ with $0<\mu_m\le\mu_M$ and $0\le c_{fM}$;
		\item 
		The coupling parameters $\alpha:\Omega\to (\phi,1]$ and $\beta:\Omega\to (0,\beta_M]$ are strictly positive;
		\item The scalar fields $\lambda$, $c_0$, $b_0$, $a_0 \in \mathbb{R}$ are such that $\lambda\ge 0$ and $a_0, c_0 \geq b_0\geq 0$;
		\item The forcing terms are chosen such that $g,H \in L^2(\Omega)$ and $\mathbf{f} \in \mathbf{L}^2(\Omega)$.
	\end{enumerate}
\end{assumption}

\subsection{Four-field formulation}
\label{sec:TPE_4field_form}

As in \cite{AntoniettiBonetti2022}, we refer to the four-field formulation of the THM problem obtained by introducing the total pressure auxiliary variable $\varphi = \lambda\div{\mathbf{u}}-\alpha p -\beta T$ (\cite{Oyarzua2016, BottiDiPietro2020}), in which we include all the volumetric contributions to the stress tensor.

We introduce the functional spaces $\mathbf{V} = \mathbf{H}^1_0(\Omega),\ V = H^1_0(\Omega)$, and $Q = L^2(\Omega)$. Then, the weak formulation of \eqref{eq:TPE_system} reads: \textit{find $(\mathbf{u},p,T,\varphi) \in \mathbf{V} \times V \times V \times Q$ such that:}
\begin{equation}
	\label{eq:TPE_weakform_4fields}
	\begin{aligned}
		& \mathcal{M}((p, T, \varphi), (q, S, \psi)) + (\boldsymbol{\Theta} \nabla T,\nabla S) - (c_f \nabla T \cdot (\mathbf{K} \nabla p),S) + (\mathbf{K} \nabla p,\nabla q) +  (2 \mu \boldsymbol{\epsilon}(\mathbf{u}),\boldsymbol{\epsilon}(\mathbf{v})) \\
		& + (\varphi,\nabla \cdot \mathbf{v}) - (\nabla \cdot \mathbf{u},\psi) = (H,s) + (g,q) + (\mathbf{f},\mathbf{v}),
	\end{aligned}
\end{equation}
for all $(\mathbf{v},q,S,\psi) \in \mathbf{V} \times V \times V \times Q$ where the bilinear form $\mathcal{M}:V\times V \times Q\to\mathbb{R}$ is given by:
$$
\begin{aligned}
	\mathcal{M}((p, T, \varphi), (q, S, \psi)) = & \, \mathcal{M}_{p}(p,q) + \mathcal{M}_{T}(T,S) + \mathcal{M}_{\varphi}(\varphi,\psi) + \mathcal{M}_{pT}(T, q) + \mathcal{M}_{pT}(S, p) \\
	& + \mathcal{M}_{p\varphi}(q, \varphi) + \mathcal{M}_{p\varphi}(p, \psi) + \mathcal{M}_{T\varphi}(S, \varphi) + \mathcal{M}_{T\varphi}(T, \psi).
\end{aligned}
$$
with:
$$
\begin{aligned}
	&\mathcal{M}_{p}(p,q) = \left(c_{\alpha} \, p, q\right), \, &&\mathcal{M}_{T}(T,S) = \left(a_{\beta} \, T, S\right), \, && \mathcal{M}_{\varphi}(\varphi,\psi) = \frac{1}{\lambda}(\varphi,\psi), \\
	&\mathcal{M}_{pT}(S,q) = \left(b_{\alpha\beta} \, S, q\right), \, &&\mathcal{M}_{p\varphi}(q,\psi) = \left(\frac{\alpha}{\lambda}q, \psi\right), \, && \mathcal{M}_{T\varphi}(S,\psi) = \left(\frac{\beta}{\lambda}S, \psi\right).
\end{aligned}
$$
where:
$$
c_{\alpha} = c_0 + \frac{\alpha^2}{\lambda} \qquad a_{\beta} = a_0 + \frac{\beta^2}{\lambda} \qquad b_{\alpha\beta} = -b_0 + \frac{\alpha\beta}{\lambda}.
$$
\begin{remark}
	The convection term $c_f \nabla T \cdot (\mathbf{K} \nabla p)$ should not be tested by an $H^1$-regular function, since it is only in $L^1(\Omega)$. However, it can be inferred from the thermal energy equation \eqref{eq:TPE_energy_cons} and the assumption on the problem data that $c_f \nabla T \cdot (\mathbf{K} \nabla p) \in H^{-1}(\Omega)$, where $H^{-1}(\Omega)$ is the dual space of $V$. Therefore, the third term in the left-hand side of \eqref{eq:TPE_weakform_4fields} has to be intended as the duality product 
	$$
	\langle c_f \nabla T \cdot (\mathbf{K} \nabla p),S \rangle_{H^{-1}(\Omega),H^1_0(\Omega)}.
	$$
	For additional details on the well-posedness of coupled Darcy and heat equations, we refer to \cite{Bernardi.Dib:18}.
\end{remark}

\section{Discretization}
\label{sec:Discretization}
The aim of this section is to derive the PolyDG approximation of problem \eqref{eq:TPE_system}. We start by introducing some preliminaries, then, we discuss the PolyDG scheme, in which we exploit the Weighted Symmetric Interior Penalty formulation (WSIP) \cite{Ern2009}, in order to make the method able to cope with strong heterogeneities in the physical coefficients. A particular focus will be devoted to the linearization and stabilization procedures.

\subsection{Preliminaries}
\label{sec:TPE_DG_preliminaries}
The aim of this section is to introduce some instrumental assumptions and results on the PolyDG method. For designing the PolyDG discretization of Problem~\eqref{eq:TPE_system} we start by introducing a polytopal subdivision $\mathcal{T}_h$ of the computational domain $\Omega$ and its features. An interface is defined as a planar, simplicial subset of the intersection of the boundaries of any two neighboring elements of $\mathcal{T}_h$. 
In the following, we denote with $\mathcal{F}$, $\mathcal{F}_I$, and $\mathcal{F}_B$ the set of faces, interior faces, and boundary faces, respectively.
In what follows, we introduce the main assumptions on the mesh $\mathcal{T}_h$ \cite{Cangiani2014, CangianiDong:17}.
\begin{definition}[Polytopic regular mesh]
	\label{def:unif_regular}
	A mesh $\mathcal{T}_h$ is said to be polytopic regular if $\forall \kappa \in \mathcal{T}_h$, there exist a set of non-overlapping $d$-dimensional simplices contained in $\kappa$, denoted by $\{S_{\kappa}^F\}_{F \subset \partial \kappa}$, such that, for any face $F \subset \partial \kappa$ the following condition holds: $h_{\kappa} \lesssim d \ |S_{\kappa}^F| \ |F|^{-1}$.
\end{definition}

\begin{assumption}
	\label{ass:TPE_mesh_Th1}
	Given $\{\mathcal{T}_h\}_h, h>0$, we assume that the following properties are uniformly satisfied:
	\begin{enumerate}[start=1,label={\bfseries A.\arabic* }]
		\item \label{ass:TPE_A1} $\mathcal{T}_h$ is uniformly polytopic-regular;
		\item \label{ass:TPE_A3} For any neighbouring elements $\kappa^+, \kappa^- \in \mathcal{T}_h$, an hp-local bounded variation property holds, i.e. \\$h_{\kappa^+} \lesssim h_{\kappa^-} \lesssim h_{\kappa^+}$.
	\end{enumerate}
\end{assumption}
Note that the bounded variation hypothesis \ref{ass:TPE_A3} is introduced to avoid technicalities. Under \ref{ass:TPE_A1} the following inequality (\textit{trace-inverse} inequality) holds \cite{Cangiani2017}: 
\begin{equation}
	\label{eq:TPE_trace_inverse_ineq}
	\| v\|_{L^2(\partial \kappa)} \le C_{\mathrm{tr}}  
	h_{\kappa}^{-\frac12}\, \ell\, \|v\|_{L^2(\kappa)} \quad \forall v \in \mathbb{P}^{\ell}(\kappa) \quad \forall \kappa \in \mathcal{T}_h,
\end{equation}
where $\mathbb{P}^{\ell}(\kappa)$ is the space of polynomials of maximum degree equal to $\ell$ in $\kappa$, $h_{\kappa}$ is the diameter of $\kappa \in \mathcal{T}_h$, and $C_{\mathrm{tr}}>0$ is independent of $\ell, h_{\kappa}$, the number of faces per element, and the relative size of a face compared to the diamater of the element it belongs to.

For the sake of simplicity, we assume that the parameters $\boldsymbol{\Theta}, \mathbf{K}$, $\mu$, and $c_f$ are element-wise constant. Then, we can introduce the following quantities:
\begin{equation}
	\Theta_{\kappa} = \left(|\sqrt{\boldsymbol{\Theta}\rvert_{\kappa}}|_2^2 \right), \qquad K_{\kappa} = \left(|\sqrt{\mathbf{K}\rvert_{\kappa}}|_2^2 \right), \qquad \mu_{\kappa} = \mu \rvert_{\kappa},
	\qquad c_{f,\kappa} = c_f \rvert_{\kappa},
\end{equation}
where $|\cdot|_2$ is the $\ell^2$-norm in $\mathbb{R}^{d \times d}$. We remark that this assumption is reasonable in the context of groundwater flow models, where the data are derived through local measurements.

\subsection{The PolyDG-WSIP discrete problem}
\label{sec:TPE_DG_problem}
In this section, we present the WSIP method \cite{Ern2009} and discuss its application to the THM problem. The key ingredient of the method is to use weighted averages instead of arithmetic ones. The use of weighted averages has been introduced for elliptic problems in \cite{Stenberg1998} and then developed for discontinuous Galerkin methods (dG-WSIP) for dealing with advection-diffusion problems with locally vanishing diffusion \cite{Ern2009}. As one of the aims of this work is to inspect the robustness with respect to the model's coefficients, we use this modification of the standard PolyDG scheme. 

For the definition of the WSIP method we introduce the weight function $\omega^+:\mathcal{F}_I\to [0,1]$ \cite{Heinrich2002, Heinrich2003, Heinrich2005, Stenberg1998, Ern2009}. Given an interior face $F \in \mathcal{F}_I$, we denote the values taken by $\omega^+$ and $\omega^- = 1-\omega^+$ on the face $F$ as $\omega\rvert_F^{+}$ and $\omega\rvert_F^{-}$, respectively. Given the function $\omega$ we can introduce the notion of weighted averages and jump operators, denoted with $\wavg{\cdot}$ and $\jump{\cdot}$, and of normal jump, denoted by $\jump{\cdot}_n$ \cite{Arnold2002, Ern2009}:
\begin{equation}
	\label{eq:TPE_avg_jump_operators}
	\begin{aligned}
		& \jump{a} = a^+ \mathbf{n^+} + a^- \mathbf{n^-}, \quad && \jump{\mathbf{a}} = \mathbf{a}^+ \odot \mathbf{n^+} + \mathbf{a}^- \odot \mathbf{n^-}, \quad &&\jump{\mathbf{a}}_n = \mathbf{a}^+ \cdot \mathbf{n^+} + \mathbf{a}^- \cdot \mathbf{n^-}, \\ 
		& \wavg{a} = \omega^+ a^+ + \omega^- a^, \quad && \wavg{\mathbf{a}} = \omega^+ \mathbf{a}^+ + \omega^- \mathbf{a}^-, \quad && \wavg{\mathbf{A}} = \omega^+ \mathbf{A}^+ + \omega^- \mathbf{A}^-,
	\end{aligned}
\end{equation}
where $\mathbf{a} \odot \mathbf{n} = \mathbf{a}\mathbf{n}^T$, and $a, \ \mathbf{a}, \ \mathbf{A}$ are (regular enough) scalar-valued, vector-valued, and tensor-valued functions, respectively. The subscript $\omega$ in the weighted-average operator is omitted whenever $\omega^+ = \omega^- = 1/2$. On boundary faces $F\in\mathcal{F}_B$, we set
$ \jump{a} = a \mathbf{n},\ \wavg{a} = a,\ \jump{\mathbf{a}} = \mathbf{a} \odot \mathbf{n},\ \wavg{\mathbf{a}} = \mathbf{a},\ \jump{\mathbf{a}}_n = \mathbf{a} \cdot \mathbf{n},\ \wavg{\mathbf{A}} = \mathbf{A}.$ For the averages, this corresponds to consider $\omega^\pm$ single-valued and equal to $1$.

We start deriving the PolyDG-WSIP formulation of problem \eqref{eq:TPE_weakform_4fields} by intoducing the discrete spaces that are used in the following. Given $m,\ell \geq 1$, we define:
\begin{equation}
	\begin{aligned}
		Q_h^{m} &= \left\{ \psi \in L^2(\Omega) : \psi |_{\kappa} \in \mathbb{P}^{m}(\kappa) \  \forall \kappa \in \mathcal{T}_h \right\}\hspace{-0.5mm}, \;
		V_h^{\ell} = \left\{ v \in L^2(\Omega) : v |_{\kappa} \in \mathbb{P}^{\ell}(\kappa) \  \forall \kappa \in \mathcal{T}_h \right\}\hspace{-0.5mm}, \;
		\mathbf{V}_h^{\ell} = \left[V_h^{\ell}\right]^d\hspace{-1mm}.
	\end{aligned}
\end{equation}
The PolyDG-WSIP discretization of problem \eqref{eq:TPE_weakform_4fields} reads:
\textit{find $(\mathbf{u}_h,p_h,T_h,\varphi_h) \in \mathbf{V}_h^{\ell} \times V_h^{\ell} \times V_h^{\ell} \times Q_h^m$ such that for all $(\mathbf{v}_h,q_h,S_h,\varphi_h) \in \mathbf{V}_h^{\ell} \times V_h^{\ell} \times V_h^{\ell} \times Q_h^m$ it holds}
\begin{equation}
	\label{eq:TPE_discrete_4fields}
	\begin{aligned}
		& \mathcal{M}((p_h, T_h, \varphi_h),(q_h,S_h,\psi_h)) + \mathcal{A}_h^{T}(T_h,S_h) + \mathcal{C}_h(T_h,p_h,S_h)
		+ \mathcal{A}_h^{p}(p_h,q_h) + \mathcal{A}_h^{e}(\mathbf{u}_h,\mathbf{v}_h) \\
		&\; - \mathcal{B}_h(\varphi_h,\mathbf{v}_h) +  \mathcal{B}_h(\psi_h, \mathbf{u}_h) +  \mathcal{D}_h(\varphi_h,\psi_h)
		= \left((\mathbf{f}, g, H),(\mathbf{v}_h, q_h, S_h)\right),
	\end{aligned}
\end{equation}
where the bilinear and trilinear forms are defined by:
\begin{equation}
	\label{eq:TPE_bilinear_forms_discr}
	\begin{aligned}
		& \mathcal{A}_h^T(T,S) = \left(\boldsymbol{\Theta}\nabla_h T, \nabla_h S\right) - \sum_{F \in \mathcal{F}} \int_F \left(\wTavg{\boldsymbol{\Theta}\nabla_h T} \mkern-2.5mu \cdot \mkern-2.5mu \jump{S} + \jump{T} \mkern-2.5mu \cdot \mkern-2.5mu \wTavg{\boldsymbol{\Theta}\nabla_h S} - \sigma \jump{T} \mkern-2.5mu \cdot \mkern-2.5mu \jump{S}\right),\\
		& \mathcal{A}_h^p(p,q) = (\mathbf{K} \nabla_h p,\nabla_h q) - \sum_{F \in \mathcal{F}} \int_F \left(\wPavg{\mathbf{K} \nabla_h p} \mkern-2.5mu \cdot \mkern-2.5mu \jump{q} + \jump{p} \mkern-2.5mu \cdot \mkern-2.5mu \wPavg{\mathbf{K} \nabla_h q} - \xi \jump{p} \mkern-2.5mu \cdot \mkern-2.5mu \jump{q}\right),\\
		& \mathcal{A}_h^e(\mathbf{u},\mathbf{v}) = (2 \mu\boldsymbol{\epsilon}_h(\mathbf{u}),\boldsymbol{\epsilon}_h(\mathbf{v}))
		- \sum_{F \in \mathcal{F}} \int_F \left( \wUavg{2 \mu\boldsymbol{\epsilon}_h(\mathbf{u})} \mkern-2.5mu : \mkern-2.5mu \jump{\mathbf{v}} + \jump{\mathbf{u}} \mkern-2.5mu : \mkern-2.5mu \wUavg{2 \mu\boldsymbol{\epsilon}_h(\mathbf{v})} -  \zeta \jump{\mathbf{u}} \mkern-2.5mu : \mkern-2.5mu \jump{\mathbf{v}}\right),\\
		& \mathcal{B}_h(\varphi,\mathbf{v}) = - (\varphi,\nabla_h \mkern-2.5mu \cdot \mkern-2.5mu \mathbf{v}) + \sum_{F \in \mathcal{F}} \int_F \avg{ \varphi} \mkern-2.5mu \cdot \mkern-2.5mu \jump{\mathbf{v}}_n,\\
		& \begin{aligned}
			\mathcal{C}_h (T,p,S) = \ & \big( -c_f \left(\mathbf{K} \ \nabla_h p\right) \cdot \nabla_h T, S \big) - \sum_{F \in \mathcal{F}_I} \int_{F} \left( \avg{ - c_f \ \mathbf{K} \nabla_h p } \cdot \jump{T} \right) \avg{S} \\
			& + \frac{1}{2}\sum_{F \in \mathcal{F}} \int_F \ \left| \avg{ - c_f \ \mathbf{K} \nabla_h p} \cdot \mathbf{n} \right| \jump{T} \mkern-2.5mu \cdot \mkern-2.5mu \jump{S} - \frac{1}{2}\sum_{F \in \mathcal{F}_B} \int_F \  (- c_f \ \mathbf{K} \nabla_h p) \cdot \mathbf{n} \, T \, S \end{aligned} \\
		&\mathcal{D}_h(\varphi,\psi) = \sum_{F \in \mathcal{F}_I} \int_F \varrho \jump{\varphi} \mkern-2.5mu \cdot \mkern-2.5mu \jump{\psi}.
	\end{aligned}
\end{equation}
For all $w\in V_h^{\ell}$ and $\mathbf{w}\in \mathbf{V}_h^{\ell}$, $\nabla_h w$ and $\divh{\mathbf{w}}$ denote the broken differential operators whose restrictions to each element $\kappa \in \mathcal{T}_h$ are defined as $\nabla w_{|\kappa}$ and $\div{\mathbf{w}}_{|\kappa}$, respectively, and $\boldsymbol{\epsilon}_h(\mathbf{u}) = \left(\nabla_h \mathbf{u} + \nabla_h \mathbf{u}^T\right)/2$. In \eqref{eq:TPE_bilinear_forms_discr} we set:
\begin{equation}
	\omega_{\boldsymbol{\Theta}}^{\pm} = \frac{\delta_{\boldsymbol{\Theta}_n}^{\mp}}{\delta_{\boldsymbol{\Theta}_n}^{+} + \delta_{\boldsymbol{\Theta}_n}^{-}}, \qquad \omega_{\mathbf{K}}^{\pm} = \frac{\delta_{\mathbf{K}_n}^{\mp}}{\delta_{\mathbf{K}_n}^{+} + \delta_{\mathbf{K}_n}^{-}}, \qquad 
	\omega_{\mu}^{\pm} = \frac{\mu^{\mp}}{\mu^{+} + \mu^{-}},
\end{equation}
where $\delta_{\boldsymbol{\Theta}_n}^{\pm} = \mathbf{n}^{{\pm}^T} \, \boldsymbol{\Theta}^{\pm} \, \mathbf{n}^{{\pm}}$, $\delta_{\mathbf{K}_n}^{\pm} = \mathbf{n}^{{\pm}^T} \, \mathbf{K}^{\pm} \, \mathbf{n}^{{\pm}}$. Note that, the PolyDG-WSIP method requires also a different definition of penalty coefficients with respect to standard IP method \cite{Arnold1982, Wheeler1978, Ern2021, Cangiani2017}. Thus, the stabilization functions $\sigma, \xi, \zeta, \varrho \in L^{\infty}(\mathcal{F}_h)$ appearing in \eqref{eq:TPE_bilinear_forms_discr} are defined according to \cite{Ern2009} as:
\begin{equation}
	\label{eq:TPE_stabilization_func}
	\begin{aligned}
		\sigma &= \left\{\begin{aligned}
			&\alpha_1 \gamma_{\boldsymbol{\Theta}} \underset{\kappa \in \{\kappa^+,\kappa^-\}}{\mbox{max}} \left(\frac{\ell^2}{h_{\kappa}}\right) \quad & F \in \mathcal{F}_I,\\
			&\alpha_1 \overline{\Theta}_{\kappa} \frac{\ell^2}{h_{\kappa}} \quad & F \in \mathcal{F}_B,\\
		\end{aligned}
		\right.
		\qquad
		\xi &&= \left\{\begin{aligned}
			&\alpha_2 \gamma_{\mathbf{K}} \underset{\kappa \in \{\kappa^+,\kappa^-\}}{\mbox{max}}\left(\frac{\ell^2}{h_{\kappa}}\right) \quad & F \in \mathcal{F}_I,\\
			&\alpha_2 \overline{K}_{\kappa} \frac{\ell^2}{h_{\kappa}} \quad & F \in \mathcal{F}_B,\\
		\end{aligned}
		\right.\\
		\zeta &= \left\{\begin{aligned}
			&\alpha_3 \gamma_{\mu} \underset{\kappa \in \{\kappa^+,\kappa^-\}}{\mbox{max}}\left(\frac{\ell^2}{h_{\kappa}}\right) \quad & F \in \mathcal{F}_I,\\
			&\alpha_3 \mu_{\kappa} \frac{\ell^2}{h_{\kappa}} \quad & F \in \mathcal{F}_B,\\
		\end{aligned}
		\right.
		\qquad
		\varrho &&= \left\{\begin{aligned}
			&\alpha_4 \underset{\kappa \in \{\kappa^+,\kappa^-\}}{\mbox{min}}\left(\frac{h_{\kappa}}{m}\right) \quad & F \in \mathcal{F}_I,\\
			&\alpha_4 \frac{h_{\kappa}}{m} \quad & F \in \mathcal{F}_B,\\
		\end{aligned}
		\right.\\
	\end{aligned}   
\end{equation}
where $\alpha_1, \alpha_2, \alpha_3, \alpha_4 \in \mathbb{R}$ are positive constants to be properly defined, $h_{\kappa}$ is the diameter of the element $\kappa \in \mathcal{T}_h$, and the coefficients $\gamma_{\boldsymbol{\Theta}}$, $\gamma_{\mathbf{K}}$, $\gamma_{\mu}$ are given by:
\begin{equation}
	\gamma_{\boldsymbol{\Theta}} = \frac{ \delta_{\boldsymbol{\Theta}_n}^{+} \, \delta_{\boldsymbol{\Theta}_n}^{-}}{\delta_{\boldsymbol{\Theta}_n}^{+} + \delta_{\boldsymbol{\Theta}_n}^{-}}, \qquad \gamma_{\mathbf{K}} = \frac{\delta_{\mathbf{K}_n}^{+} \, \delta_{\mathbf{K}_n}^{-}}{\delta_{\mathbf{K}_n}^{+} + \delta_{\mathbf{K}_n}^{-}}, \qquad \gamma_{\mu}^{\pm} = \frac{\mu^{+} \, \mu^{-}}{\mu^{+} + \mu^{-}}.
\end{equation}
We point out that in the discrete formulation above, we have decided to consider the same polynomial degree for the spaces $V_h^{\ell}$ and $\mathbf{V}_h^{\ell}$, because we are mainly interested in approximation schemes yielding the same accuracy for all the primary variables.
\begin{remark}
	Note that, in \eqref{eq:TPE_discrete_4fields}, we have added an additional weakly consistent stabilization term $\mathcal{D}_h$ for the total pressure following the dG discretization of the Stokes problem analyzed in \cite{antonietti2020_stokesDG}.
\end{remark}

\section{Solution strategies}
\label{sec:SolutionStrategy}
The aim of this section is to present three different solution strategies we can exploit for solving problem \eqref{eq:TPE_discrete_4fields}. We first present the fixed-point strategy described in \cite{Bonetti2024}, then we extend two splitting strategies presented in \cite{Brun2020, Cai2025} to the PolyDG approximation of the four-fields formulation of the nonlinear TPE problem. All these algorithms are based on an iterative fashion, where the solution is computed using the sequence $(\mathbf{u}_h^k, p_h^k, T_h^k, \varphi_h^k)$ for $k \geq 0$, where the
iterate $(\mathbf{u}_h^0, p_h^0, T_h^0, \varphi_h^0)$  is a (possibly educated) initial guess.
\smallskip

The proposed algorithms rely on the decoupling of the subproblems involved in the physical phenomena. In the following, we use the letters \textbf{F}, \textbf{H}, and \textbf{M} to denote the flow \eqref{eq:TPE_mass_cons}, heat \eqref{eq:TPE_energy_cons}, and mechanics \eqref{eq:TPE_momentum_cons} problems, respectively. We remark that, as a first approach, in the mechanics problem we consider both the displacement $\mathbf{u}$ and the total-pressure $\varphi$ as unknowns. Then, the subproblems in the continuous form read:
$$
\begin{aligned}
	& \mathbf{F} && c_0 p - \div{(\mathbf{K} \nabla p)} = g + b_0 T - \alpha \div{\mathbf{u}} \\
	& \mathbf{H} && a_0 T - c_f \nabla T \cdot (\mathbf{K} \nabla p) - \div{(\boldsymbol{\Theta} \nabla T )} = H + b_0 p - \beta \div{\mathbf{u}} \\
	& \mathbf{M} && \left\{ \begin{aligned}
		&-\div{\left(2 \mu \boldsymbol{\epsilon}(\mathbf{u}) + \varphi \mathbf{I}\right)} = \mathbf{f} \\
		&\varphi - \lambda \div{\mathbf{u}} = - \alpha p - \beta T 
	\end{aligned}
	\right.
\end{aligned}
$$

We start by presenting the fully-coupled scheme \textbf{FHM} -- presented in Algorithm~\ref{alg:FHM} -- which only results in a linearization procedure, coupled with a fixed-point iteration scheme, in which we approximate the nonlinear convective term as $\mathcal{C}_h(T_h^k,p_h^{k-1},S_h)$. The main challenge presented by this algorithm is that is does require the solution of a large linear system generated by \textbf{FHM} at each iteration $k \geq 0$. We remark that, the \textbf{FHM} scheme is the solution strategy presented in \cite{Bonetti2024}.

\begin{algorithm}[!ht]
	\small
	\caption{FHM: the monolithic scheme}\label{alg:FHM}
	\begin{algorithmic}
		
		\State \textbf{Input:} 
		\begin{itemize}
			\item $\mathbf{X}_h^{0} = (\mathbf{u}_h^0, p_h^0, T_h^0, \varphi_h^0) \rightarrow$ initial condition, \Comment{zero solution or solution to the linear problem}
			\item $\mathrm{toll}_{\text{abs}}, \, \mathrm{toll}_{\text{rel}} \rightarrow$ tolerance for the absolute and relative errors (between successive iterations),
			\item $\mathrm{nmax} \rightarrow$ maximum number of iterations.
		\end{itemize}
		
		\State \textbf{Output:} 
		\begin{itemize}
			\item $\mathbf{X}_h = (\mathbf{u}_h, p_h, T_h, \varphi_h) \rightarrow$ solution,
			\item $k \rightarrow$ iterations count for convergence
		\end{itemize}
		
		\State Initialize $k \gets 0$, $E_{\text{abs}} \gets \mathrm{toll}_{\text{abs}} + 1$, and $E_{\text{rel}} \gets \mathrm{toll}_{\text{rel}} + 1$
		\vspace{0.1cm}
		\While{$E_{\text{abs}} > \mathrm{toll}_{\text{abs}} \,\,\,\,\text{and}\,\,\,\, E_{\text{rel}} > \mathrm{toll}_{\text{rel}} \,\,\,\,\text{and}\,\,\,\, k < \mathrm{nmax}$}
		\vspace{0.1cm}
		\State given $p_h^k$, find $\mathbf{X}_h^{k+1} \in \mathbf{V}_h^{\ell} \times V_h^{\ell} \times V_h^{\ell} \times Q_h^m$ such that
		$$
		\begin{aligned}
			& \mathcal{M}((p_h^{k+1}, T_h^{k+1}, \varphi_h^{k+1}),(q_h,S_h,\psi_h)) + \mathcal{A}_h^{T}(T_h^{k+1},S_h) + \mathcal{C}_h(T_h^{k+1},p_h^{k},S_h) + \mathcal{A}_h^{p}(p_h^{k+1},q_h) \\
			& + \mathcal{A}_h^{e}(\mathbf{u}_h^{k+1},\mathbf{v}_h) - \mathcal{B}_h(\varphi_h^{k+1},\mathbf{v}_h) +  \mathcal{B}_h(\psi_h, \mathbf{u}_h^{k+1}) + \mathcal{D}_h(\varphi_h^{k+1},\psi_h) = \left((\mathbf{f}, g, H),(\mathbf{v}_h, q_h, S_h)\right)
		\end{aligned}
		$$
		\State compute $E_{\text{abs}} \gets \|\mathbf{u}_h^{k+1} - \mathbf{u}_h^{k}\| + \|p_h^{k+1} - p_h^{k}\| +  \|T_h^{k+1} - T_h^{k}\| +  \|\varphi_h^{k+1} - \varphi_h^{k}\|$
		\vspace{0.1cm}
		\If {$\|\mathbf{u}_h^{k}\| \neq 0 \,\,\,\,\text{and}\,\,\,\, \|p_h^{k}\| \neq 0 \,\,\,\,\text{and}\,\,\,\, \|T_h^{k}\| \neq 0 \,\,\,\,\text{and}\,\,\,\, \|\varphi_h^{k}\| \neq 0$}
		\State compute $E_{\text{rel}} \gets \frac{\|\mathbf{u}_h^{k+1} - \mathbf{u}_h^{k}\|}{\|\mathbf{u}_h^{k}\|} + \frac{\|p_h^{k+1} - p_h^{k}\|}{\|p_h^{k}\|} + \frac{\|T_h^{k+1} - T_h^{k}\|}{\|T_h^{k}\|} + \frac{\|\varphi_h^{k+1} - \varphi_h^{k}\|}{\|\varphi_h^{k}\|}$
		\Else 
		\State assign $E_{\text{rel}} \gets \mathrm{toll}_{\text{rel}} + 1$ 
		\vspace{0.1cm}
		\EndIf
		\vspace{0.1cm}
		\State update $\mathbf{X}_h^{k} \gets \mathbf{X}_h^{k+1}$, $k \gets k + 1$
		\vspace{0.1cm}
		\EndWhile
		\vspace{0.1cm}
		\State Set $\mathbf{X}_h \gets \mathbf{X}_h^k$
	\end{algorithmic}
\end{algorithm}
\smallskip

We consider splitting schemes in which we either decouple all the subproblems and solve each separately at every iteration (\textit{three-step} algorithm) or decouple only one subproblem from the other two which are then solved together (\textit{two-step} algorithm). Following this notation, we refer to the fixed-point strategy in which we solve the linearized problem monolithically at every iteration as a \textit{one-step} algorithm. For instance, a \textit{two-step} algorithm where the flow and mechanics subproblems are solved together decoupled from the heat subproblem is referred to as \textbf{FM-H} and similarly for other combinations of coupling/decoupling of the subproblems.

The two splitting schemes we propose are the \textbf{FM-H} and the \textbf{F-H-M} schemes and are presented in Algorithm~\ref{alg:FM-H} and Algorithm~\ref{alg:F-H-M}, respectively. We remark that, in these schemes the \textbf{F} and \textbf{H} problems are always decoupled. Following this strategy, we observe that the problem in which the temperature is involved is always a linear problem, then we do not need to consider two nested iterative procedures due to the splitting iterations together with the linearization algorithm to deal with the convective nonlinear term. 

\begin{algorithm}[ht!]
	\small
	\caption{FM-H: coupled flow and mechanics}
	\label{alg:FM-H}
	\begin{algorithmic}
		\State \textbf{Input:} 
		\begin{itemize}
			\item $\mathbf{X}_h^{0} = (\mathbf{u}_h^0, p_h^0, T_h^0, \varphi_h^0) \rightarrow$ initial condition, \Comment{zero solution or solution to the linear problem}
			\item $\mathrm{toll}_{\text{abs}}, \, \mathrm{toll}_{\text{rel}} \rightarrow$ tolerance for the absolute and relative errors (between successive iterations),
			\item $\mathrm{nmax} \rightarrow$ maximum number of iterations.
		\end{itemize}
		
		\State \textbf{Output:} 
		\begin{itemize}
			\item $\mathbf{X}_h = (\mathbf{u}_h, p_h, T_h, \varphi_h) \rightarrow$ solution,
			\item $k \rightarrow$ iterations count for convergence
		\end{itemize}
		
		\State Initialize $k \gets 0$, $E_{\text{abs}} \gets \mathrm{toll}_{\text{abs}} + 1$, and $E_{\text{rel}} \gets \mathrm{toll}_{\text{rel}} + 1$
		\vspace{0.1cm}
		\While{$E_{\text{abs}} > \mathrm{toll}_{\text{abs}} \,\,\,\,\text{and}\,\,\,\, E_{\text{rel}} > \mathrm{toll}_{\text{rel}} \,\,\,\,\text{and}\,\,\,\, k < \mathrm{nmax}$}
		\vspace{0.1cm}
		\Statex \textbf{Step 1:} given $T_h^k$, find $(\mathbf{u}_h^{k+1}, p_h^{k+1}, \varphi_h^{k+1}) \in \mathbf{V}_h^{\ell} \times V_h^{\ell} \times Q_h^m$ such that
		$$
		\begin{aligned}
			& \mathcal{M}_{p}(p_h^{k+1},q_h) + \mathcal{M}_{\varphi}(\varphi_h^{k+1},\psi_h) + \mathcal{M}_{p\varphi}(p_h^{k+1}, \psi_h) + \mathcal{M}_{p\varphi}(q_h, \varphi_h^{k+1}) \\
			& + \mathcal{A}_h^{p}(p_h^{k+1},q_h) + \mathcal{A}_h^{e}(\mathbf{u}_h^{k+1},\mathbf{v}_h) - \mathcal{B}_h(\varphi_h^{k+1},\mathbf{v}_h) +  \mathcal{B}_h(\psi_h, \mathbf{u}_h^{k+1}) \\
			& +  \mathcal{D}_h(\varphi_h^{k+1},\psi_h) = \left((\mathbf{f}, g),(\mathbf{v}_h, q_h)\right) - \mathcal{M}_{pT}(T_h^k, q_h) - \mathcal{M}_{T\varphi}(T_h^{k}, \psi) 
		\end{aligned}
		$$
		\State \textbf{Step 2:} given $(\mathbf{u}_h^{k+1}, p_h^{k+1}, \varphi_h^{k+1})$, find $T_h^{k+1} \in V_h^{\ell}$ such that
		$$
		\begin{aligned}
			& \mathcal{M}_{T}(T_h^{k+1}, S_h) + \mathcal{A}_h^{T}(T_h^{k+1},S_h) + \mathcal{C}_h(T_h^{k+1},p_h^{k+1},S_h) \\
			& = (H, S_h) - \mathcal{M}_{pT}(S_h, p_h^{k+1}) - \mathcal{M}_{T\varphi}(S_h, \varphi_h^{k+1}).
		\end{aligned}
		$$
		\State compute $E_{\text{abs}} \gets \|\mathbf{u}_h^{k+1} - \mathbf{u}_h^{k}\| + \|p_h^{k+1} - p_h^{k}\| +  \|T_h^{k+1} - T_h^{k}\| +  \|\varphi_h^{k+1} - \varphi_h^{k}\|$
		\vspace{0.1cm}
		\If {$\|\mathbf{u}_h^{k}\| \neq 0 \,\,\,\,\text{and}\,\,\,\, \|p_h^{k}\| \neq 0 \,\,\,\,\text{and}\,\,\,\, \|T_h^{k}\| \neq 0 \,\,\,\,\text{and}\,\,\,\, \|\varphi_h^{k}\| \neq 0$}
		\State compute $E_{\text{rel}} \gets \frac{\|\mathbf{u}_h^{k+1} - \mathbf{u}_h^{k}\|}{\|\mathbf{u}_h^{k}\|} + \frac{\|p_h^{k+1} - p_h^{k}\|}{\|p_h^{k}\|} + \frac{\|T_h^{k+1} - T_h^{k}\|}{\|T_h^{k}\|} + \frac{\|\varphi_h^{k+1} - \varphi_h^{k}\|}{\|\varphi_h^{k}\|}$
		\Else 
		\State assign $E_{\text{rel}} \gets \mathrm{toll}_{\text{rel}} + 1$ 
		\vspace{0.1cm}
		\EndIf
		\vspace{0.1cm}
		\State update $\mathbf{X}_h^{k} \gets \mathbf{X}_h^{k+1}$, $k \gets k + 1$
		\vspace{0.1cm}
		\EndWhile
		\vspace{0.1cm}
		\State Set $\mathbf{X}_h \gets \mathbf{X}_h^k$
	\end{algorithmic}
\end{algorithm}

\begin{algorithm}[ht!]
	\small
	\caption{F-H-M: decoupled flow, heat, and mechanics}\label{alg:F-H-M}
	\begin{algorithmic}
		
		\State \textbf{Input:} 
		\begin{itemize}
			\item $\mathbf{X}_h^{0} = (\mathbf{u}_h^0, p_h^0, T_h^0, \varphi_h^0) \rightarrow$ initial condition, \Comment{zero solution or solution to the linear problem}
			\item $\mathrm{toll}_{\text{abs}}, \, \mathrm{toll}_{\text{rel}} \rightarrow$ tolerance for the absolute and relative errors (between successive iterations),
			\item $\mathrm{nmax} \rightarrow$ maximum number of iterations.
		\end{itemize}
		
		\State \textbf{Output:} 
		\begin{itemize}
			\item $\mathbf{X}_h = (\mathbf{u}_h, p_h, T_h, \varphi_h) \rightarrow$ solution,
			\item $k \rightarrow$ iterations count for convergence
		\end{itemize}
		
		\State Initialize $k \gets 0$, $E_{\text{abs}} \gets \mathrm{toll}_{\text{abs}} + 1$, and $E_{\text{rel}} \gets \mathrm{toll}_{\text{rel}} + 1$
		\vspace{0.1cm}
		\While{$E_{\text{abs}} > \mathrm{toll}_{\text{abs}} \,\,\,\,\text{and}\,\,\,\, E_{\text{rel}} > \mathrm{toll}_{\text{rel}} \,\,\,\,\text{and}\,\,\,\, k < \mathrm{nmax}$}
		\vspace{0.1cm}
		\State \textbf{Step 1:} given $(\mathbf{u}_h^{k}, T_h^{k}, \varphi_h^{k})$, find $p_h^{k+1} \in V_h^{\ell}$ such that
		$$ 
		\label{eq:F_H_M_splitting_1}
		\begin{aligned}
			& \mathcal{M}_{p}(p_h^{k+1},q_h) + \mathcal{A}_h^{p}(p_h^{k+1},q_h) = (g,q_h) - \mathcal{M}_{pT}(T_h^k, q_h) - \mathcal{M}_{p\varphi}(q_h, \varphi_h^{k})  
		\end{aligned}
		$$
		\State \textbf{Step 2:} given $(\mathbf{u}_h^{k}, \varphi_h^{k})$ and $p_h^{k+1}$, find $T_h^{k+1} \in V_h^{\ell}$ such that
		$$
		\label{eq:F_H_M_splitting_2}
		\begin{aligned}
			& \mathcal{M}_{T}(T_h^{k+1}, S_h) + \mathcal{A}_h^{T}(T_h^{k+1},S_h) + \mathcal{C}_h(T_h^{k+1},p_h^{k+1},S_h) \\
			& = (H, S_h) - \mathcal{M}_{pT}(S_h, p_h^{k+1}) - \mathcal{M}_{T\varphi}(S_h, \varphi_h^{k})
		\end{aligned}
		$$
		\State \textbf{Step 3:} given $(p_h^{k+1}, T_h^{k+1})$, find $(\mathbf{u}_h^{k+1}, T_h^{k+1}) \in \mathbf{V}_h^{\ell} \times V_h^{\ell}$ such that
		$$
		\label{eq:F_H_M_splitting_3}
		\begin{aligned}
			& \mathcal{M}_{T}(T_h^{k+1}, S_h) + \mathcal{A}_h^{T}(T_h^{k+1},S_h) + \mathcal{C}_h(T_h^{k+1},p_h^{k+1},S_h) \\
			& = (H, S_h) - \mathcal{M}_{pT}(S_h, p_h^{k+1}) - \mathcal{M}_{T\varphi}(S_h, \varphi_h^{k+1}).
		\end{aligned}
		$$
		\State compute $E_{\text{abs}} \gets \|\mathbf{u}_h^{k+1} - \mathbf{u}_h^{k}\| + \|p_h^{k+1} - p_h^{k}\| +  \|T_h^{k+1} - T_h^{k}\| +  \|\varphi_h^{k+1} - \varphi_h^{k}\|$
		\vspace{0.1cm}
		\If {$\|\mathbf{u}_h^{k}\| \neq 0 \,\,\,\,\text{and}\,\,\,\, \|p_h^{k}\| \neq 0 \,\,\,\,\text{and}\,\,\,\, \|T_h^{k}\| \neq 0 \,\,\,\,\text{and}\,\,\,\, \|\varphi_h^{k}\| \neq 0$}
		\State compute $E_{\text{rel}} \gets \frac{\|\mathbf{u}_h^{k+1} - \mathbf{u}_h^{k}\|}{\|\mathbf{u}_h^{k}\|} + \frac{\|p_h^{k+1} - p_h^{k}\|}{\|p_h^{k}\|} + \frac{\|T_h^{k+1} - T_h^{k}\|}{\|T_h^{k}\|} + \frac{\|\varphi_h^{k+1} - \varphi_h^{k}\|}{\|\varphi_h^{k}\|}$
		\Else 
		\State assign $E_{\text{rel}} \gets \mathrm{toll}_{\text{rel}} + 1$ 
		\vspace{0.1cm}
		\EndIf
		\vspace{0.1cm}
		\State update $\mathbf{X}_h^{k} \gets \mathbf{X}_h^{k+1}$, $k \gets k + 1$
		\vspace{0.1cm}
		\EndWhile
		\vspace{0.1cm}
		\State Set $\mathbf{X}_h \gets \mathbf{X}_h^k$
	\end{algorithmic}
\end{algorithm}

For Algorithms~\ref{alg:FHM}-~\ref{alg:F-H-M}, we employ the following stopping criterion: 
$$
\sum_{x \in \{ \mathbf{u}_h, p_h, T_h, \varphi_h\}} \hspace{-0.6cm} \| x^{k+1} - x^{k} \| \leq \text{toll}_a \quad \text{or} \quad \sum_{x \in \{ \mathbf{u}_h, p_h, T_h, \varphi_h\}} \hspace{-0.6cm} \frac{\| x^{k+1} - x^{k} \|}{\|x^k\|} \leq \text{toll}_r,
$$
where $\text{toll}_a$, $\text{toll}_r$ are suitable positive tolerances that can vary according to the employed solution strategy.

\section{Convergence analysis: F-H-M algorithm}
\label{sec:ConvMainResults}
The aim of this section is to present the main theoretical results related to the \textbf{F-H-M} splitting solution strategy. We provide a discrete stability estimate and establish the convergence theorem of the iterative procedure. The auxiliary lemmas and the proof of Theorem~\ref{thm:stability}, Theorem~\ref{thm:convergence} are postponed to Section~\ref{sec:ConvProof} for the sake of presentation. First, we introduce the $dG$-norms that appear in the results:
\begin{equation}
	\label{eq:RobQSTPE_DG_norms}
	\begin{aligned}
		&\|S\|^2_{dG,T} = \|\sqrt{\boldsymbol{\Theta}} \ \nabla_h S\|^2 +\sum_{F\in\mathcal{F}} \|\sigma^{1/2} \jump{S} \|_F^2 \quad &&  \forall \ S \in V_h^{\ell},\\ 
		&\|q\|^2_{dG,p} = \|\sqrt{\mathbf{K}} \ \nabla_h q\|^2 + \sum_{F\in\mathcal{F}} \|\xi^{1/2} \jump{q} \|_F^2 \quad && \forall \ q \in V_h^{\ell},\\ 
		&\|\mathbf{v}\|^2_{dG,e} = \|\sqrt{2 \mu} \ \boldsymbol{\epsilon}_h(\mathbf{v})\|^2 + \sum_{F\in\mathcal{F}}\|\zeta^{1/2} \jump{\mathbf{v}} \|_F^2 \quad && \forall \ \mathbf{v} \in \mathbf{V}_h^{\ell}.
	\end{aligned}
\end{equation}

\begin{theorem}[Stability of the discrete problem]
	\label{thm:stability}
	Let the assumptions of the Lemmata~\ref{lem:boundcoerc_bil_forms}, ~\ref{lem:gen_inf_sup}, ~\ref{lem:bound_Ch}, ~\ref{lem:aux_stab_bound} below be satisfied and let the transport velocity $\boldsymbol{\eta} = -c_f \mathbf{K} \nabla_h p_h \in \mathbf{V}_h^{\ell}$ be such that
	$$
	|\boldsymbol{\eta}|_{dG,\infty} \lesssim a_1 < a_0,
	$$
	with $a_1 > 0$ defined in Lemma~\ref{lem:aux_stab_bound} and hidden constant independent of $\mathbf{K}$, $\boldsymbol{\Theta}$. Moreover, let us define the solution at step $k$ as $\mathbf{X}_h^{k} = (\mathbf{u}_h^k, p_h^k, T_h^k, \varphi_h^k) \in \mathbf{V}_h^{\ell} \times V_h^{\ell} \times V_h^{\ell} \times Q_h^{m}$. Then, the solution $\mathbf{X}_h^{k+1} \in \mathbf{V}_h^{\ell} \times V_h^{\ell} \times V_h^{\ell} \times Q_h^{m}$ to problem \eqref{eq:TPE_discrete_4fields} satisfies the a-priori bound
	$$
	(a_0 - a_1)\|T_h\|^2 + c_0 \|p_h\|^2 + \mathbb{B}\|\varphi_h\|^2 + \frac{1}{2}\|\mathbf{u}_h\|_{dG,e}^2 + \|p_h\|_{dG,p}^2 + \|T_h\|_{dG,T}^2 \lesssim \mathcal{RHS}(\mathbf{f}, g, H, \mathbf{X}_h^k)
	$$
	where $\mathcal{RHS}(\cdot)$ is a suitable positive function and the hidden constant is independent of the conductivity tensors $\mathbf{K}$, $\boldsymbol{\Theta}$ and the mesh size $h$.
\end{theorem}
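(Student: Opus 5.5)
We interpret the statement as a stability bound for one sweep of the \textbf{F-H-M} iteration. Given $\mathbf{X}_h^k$, the new iterate $\mathbf{X}_h^{k+1}=(\mathbf{u}_h,p_h,T_h,\varphi_h)$ (superscripts dropped) is produced by Steps 1--3. In each step the old data are moved to the right-hand side, and all such terms are collected into $\mathcal{RHS}(\mathbf{f},g,H,\mathbf{X}_h^k)$. The plan is to test each substep with its own unknown, use coercivity and inf-sup, and absorb cross terms through Young's inequality with weights independent of $\mathbf{K}$, $\boldsymbol{\Theta}$ and $h$.

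\textbf{Flow step.} Take $q_h=p_h$ in Step 1. Lemma~\ref{lem:boundcoerc_bil_forms} gives $\mathcal{A}_h^p(p_h,p_h)\gtrsim\|p_h\|_{dG,p}^2$, with a constant independent of $\mathbf{K}$ because of the WSIP weights and the penalty $\xi$. We also have $\mathcal{M}_p(p_h,p_h)=\|c_\alpha^{1/2}p_h\|^2\ge c_0\|p_h\|^2$. The right-hand side terms are $(g,p_h)$, $-\mathcal{M}_{pT}(T_h^k,p_h)$ and $-\mathcal{M}_{p\varphi}(p_h,\varphi_h^k)$. They are bounded by Cauchy--Schwarz and Young as $\epsilon\|c_\alpha^{1/2}p_h\|^2+C_\epsilon(\|g\|^2+\|T_h^k\|^2+\|\varphi_h^k\|^2)$, using $c_\alpha\ge\alpha^2/\lambda$ so that the $\alpha/\lambda$ and $b_{\alpha\beta}$ coefficients are controlled. (If $c_0$ is allowed to vanish, $(g,p_h)$ is instead bounded via a discrete Poincar\'e inequality.) The result is a bound on $c_0\|p_h\|^2+\|p_h\|_{dG,p}^2$ in terms of data and $\mathbf{X}_h^k$.

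\textbf{Heat step.} This is the main obstacle: it contains the convective form $\mathcal{C}_h(T_h,p_h,T_h)$, where $p_h$ is the freshly computed pressure. Take $S_h=T_h$ in Step 2. The upwind-type terms of $\mathcal{C}_h$ give a nonnegative jump contribution. After integrating the volume term by parts elementwise, the remaining part is bounded through Lemma~\ref{lem:bound_Ch} and Lemma~\ref{lem:aux_stab_bound} by $a_1\|T_h\|^2$, under the hypothesis $|\boldsymbol{\eta}|_{dG,\infty}\lesssim a_1$ on the transport velocity. This leaves $\mathcal{M}_T(T_h,T_h)+\mathcal{A}_h^T(T_h,T_h)+\mathcal{C}_h(T_h,p_h,T_h)\gtrsim (a_0-a_1)\|T_h\|^2+\|T_h\|_{dG,T}^2$, and the assumption $a_1<a_0$ is exactly what makes this positive. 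The right-hand side involves $(H,T_h)$, $\mathcal{M}_{pT}(T_h,p_h)$ and $\mathcal{M}_{T\varphi}(T_h,\varphi_h^k)$. These are absorbed by Young, with $\|p_h\|$ already controlled by the flow step and $\varphi_h^k$ entering $\mathcal{RHS}$.

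\textbf{Mechanics step.} Here $p_h$ and $T_h$ are data, and the unknowns are $\mathbf{u}_h,\varphi_h$ (the displayed Step 3 is evidently meant to be the mechanics problem). Test with $(\mathbf{v}_h,\psi_h)=(\mathbf{u}_h,\varphi_h)$; the $\mathcal{B}_h$ terms cancel, leaving $\|\mathbf{u}_h\|_{dG,e}^2+\lambda^{-1}\|\varphi_h\|^2+\mathcal{D}_h(\varphi_h,\varphi_h)$. The right-hand side contains $(\mathbf{f},\mathbf{u}_h)$ together with the terms $(\lambda^{-1}(\alpha p_h+\beta T_h),\varphi_h)$. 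Control of $\|\varphi_h\|$ uniformly in $\lambda$ comes from the generalized inf-sup condition of Lemma~\ref{lem:gen_inf_sup}: $\|\varphi_h\|\lesssim\sup_{\mathbf{v}_h}\mathcal{B}_h(\varphi_h,\mathbf{v}_h)/\|\mathbf{v}_h\|_{dG,e}+|\varphi_h|_{\mathcal{D}_h}$. Substituting the momentum equation yields $\|\varphi_h\|\lesssim\|\mathbf{u}_h\|_{dG,e}+\|\mathbf{f}\|+|\varphi_h|_{\mathcal{D}_h}$. Multiplying this by a small weight $\mathbb{B}$ and adding it to the energy estimate produces the term $\mathbb{B}\|\varphi_h\|^2$ and leaves $\tfrac12\|\mathbf{u}_h\|_{dG,e}^2$ on the left. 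Discrete Korn and Poincar\'e inequalities handle $(\mathbf{f},\mathbf{u}_h)$.

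Summing the three estimates gives the claimed bound, with hidden constants depending only on the coercivity, continuity and inf-sup constants, which are robust in $\mathbf{K}$, $\boldsymbol{\Theta}$ and $h$.
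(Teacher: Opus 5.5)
Your proposal is correct and follows essentially the same route as the paper. You test Step 1 with $p_h^{k+1}$, test Step 2 with $T_h^{k+1}$ (using Lemma~\ref{lem:bound_Ch} together with $a_1<a_0$ to control the convective term), and for the mechanics step (mis-displayed in the algorithm) combine the energy test with $(\mathbf{u}_h^{k+1},\varphi_h^{k+1})$ with the inf-sup bound of Lemma~\ref{lem:gen_inf_sup} obtained from the momentum equation, then chain the three estimates. One small remark: your parenthetical Poincar\'e fallback for $(g,p_h)$ is unnecessary and would introduce a $k_m^{-1}$-dependent (hence $\mathbf{K}$-dependent) constant, whereas the paper simply uses $c_\alpha=c_0+\alpha^2/\lambda>0$ and bounds $(g,p_h)\le\|c_\alpha^{-1/2}g\|\,\|c_\alpha^{1/2}p_h\|$.
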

\begin{theorem}[Convergence of the iterative scheme]
	\label{thm:convergence}
	Let the assumptions of Theorem \ref{thm:stability} hold. Additionally, assume that for all $k\ge 1$ 
	\begin{equation}
		\label{eq:convergece_ass}
		\frac{\beta^2}{\lambda^2} \gtrsim \frac{a_1}{4}-2, \quad b_1 \lesssim 1, \quad c_1 \geq \frac{b_{\alpha\beta}^2}{2}+\frac{\alpha^2}{\lambda^2}, \quad 
		\lvert T_h^k\rvert_{dG,\infty} \lesssim  c_{fM}^{-1} \sqrt{\frac{a_1\left(c_1 - \frac{b_{\alpha\beta}^2}{2}-\frac{\alpha^2}{\lambda^2}\right)}{2(1 + C_{tr}^4)}},
	\end{equation}
	with $a_1,\, b_1,\, c_1 >0$ defined in Lemma~\ref{lem:aux_stab_bound} and hidden constant independent of the model parameters. Then, the \textbf{F-H-M} splitting strategy defined in Section~\ref{sec:SolutionStrategy} converges, namely
	$$
	\mathbf{V}_h^\ell\times V_h^\ell\times V_h^\ell\times Q_h^m \ni
	(\mathbf{e}_{\mathbf{u}}^k,e_p^k,e_T^k,e_\varphi^k)\to\mathbf{0}\;\text{ as }\;k\to\infty.
	$$
\end{theorem}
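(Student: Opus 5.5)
We will argue by a contraction (energy) argument on the iteration errors. Let $(\mathbf{u}_h,p_h,T_h,\varphi_h)$ be the discrete solution of \eqref{eq:TPE_discrete_4fields}, whose existence is guaranteed by Theorem~\ref{thm:stability}. Define the errors $e_x^k = x_h^k - x_h$ for $x\in\{\mathbf{u},p,T,\varphi\}$. Subtracting \eqref{eq:TPE_discrete_4fields}, tested with suitable components, from Steps~1--3 of Algorithm~\ref{alg:F-H-M} gives three error equations.

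\begin{enumerate}
\item \textbf{Flow error equation.} It reads $\mathcal{M}_p(e_p^{k+1},q_h)+\mathcal{A}_h^p(e_p^{k+1},q_h) = -\mathcal{M}_{pT}(e_T^k,q_h)-\mathcal{M}_{p\varphi}(q_h,e_\varphi^k)$.
\item \textbf{Heat error equation.} The nonlinear term is split as
\[
\mathcal{C}_h(T_h^{k+1},p_h^{k+1},S_h)-\mathcal{C}_h(T_h,p_h,S_h)=\mathcal{C}_h(e_T^{k+1},p_h^{k+1},S_h)+\big[\mathcal{C}_h(T_h,p_h^{k+1},S_h)-\mathcal{C}_h(T_h,p_h,S_h)\big].
\]
The second piece is linear in $e_p^{k+1}$ through the transport velocity (the upwind absolute value is handled as a Lipschitz perturbation).
\item \textbf{Mechanics error equation.} The $(\mathbf{u},\varphi)$ saddle-point system has right-hand side $-\mathcal{M}_{p\varphi}(e_p^{k+1},\psi_h)-\mathcal{M}_{T\varphi}(e_T^{k+1},\psi_h)$.
\end{enumerate}

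The key steps are as follows.

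\begin{enumerate}
\item Test the flow equation with $q_h=e_p^{k+1}$. Using coercivity from Lemma~\ref{lem:boundcoerc_bil_forms}, Cauchy--Schwarz and Young, this yields
\[
c_\alpha\|e_p^{k+1}\|^2+\|e_p^{k+1}\|_{dG,p}^2 \le \tfrac{b_{\alpha\beta}^2}{2}\text{-weighted }\|e_T^k\|^2+\tfrac{\alpha^2}{\lambda^2}\text{-weighted }\|e_\varphi^k\|^2 + \text{(absorbed terms)}.
\]
This is where the constant $c_1$ of Lemma~\ref{lem:aux_stab_bound} enters, via the condition $c_1\ge b_{\alpha\beta}^2/2+\alpha^2/\lambda^2$.
\item Test the heat equation with $S_h=e_T^{k+1}$. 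The term $\mathcal{C}_h(e_T^{k+1},p_h^{k+1},e_T^{k+1})$ is bounded from below as in the proof of Theorem~\ref{thm:stability}, using the upwind positivity together with the assumption $|\boldsymbol\eta|_{dG,\infty}\lesssim a_1<a_0$; this leaves $(a_0-a_1)\|e_T^{k+1}\|^2+\|e_T^{k+1}\|_{dG,T}^2$ on the left.
\item Bound the perturbation term $\mathcal{C}_h(T_h,\cdot)$ via Lemma~\ref{lem:bound_Ch}. Applying the trace-inverse inequality \eqref{eq:TPE_trace_inverse_ineq} to the face terms gives a bound of the form $c_{fM}|T|_{dG,\infty}(1+C_{tr}^2)\|e_p^{k+1}\|_{dG,p}\|e_T^{k+1}\|$. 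The factor $1+C_{tr}^4$ under the square root in \eqref{eq:convergece_ass} comes from squaring after Young's inequality.
\item For the mechanics step, use the generalized inf-sup of Lemma~\ref{lem:gen_inf_sup}, with stabilization $\mathcal{D}_h$, to control $\|e_\varphi^{k+1}\|$ and $\|e_\mathbf{u}^{k+1}\|_{dG,e}$. These are bounded by $\tfrac{\alpha}{\lambda}\|e_p^{k+1}\|+\tfrac{\beta}{\lambda}\|e_T^{k+1}\|$, with $b_1\lesssim1$ used for the boundedness constants. Testing also with $\psi_h=e_\varphi^{k+1}$ produces the $\frac1\lambda\|e_\varphi\|^2$ coercive term; the condition on $\beta^2/\lambda^2$ relative to $a_1$ balances the $e_T$ contribution here.
\end{enumerate}

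Summing the three estimates should give
\[
\mathcal{E}^{k+1}:=A\|e_T^{k+1}\|^2+B\|e_\varphi^{k+1}\|^2+\|e_p^{k+1}\|_{dG,p}^2+\ldots \;\le\; \rho\,\big(A\|e_T^{k}\|^2+B\|e_\varphi^{k}\|^2\big)
\]
with $\rho<1$, using that $e_T^k,e_\varphi^k$ are the only lagged quantities. Iterating gives $\mathcal{E}^k\le\rho^k\mathcal{E}^0\to0$. Since $e_p^{k+1}$ and $e_\mathbf{u}^{k+1}$ are controlled by lagged errors, all four errors tend to zero in the dG norms, and hence in $L^2$ by the discrete Poincaré inequality. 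Finite dimensionality makes the choice of norm immaterial for the final convergence statement.

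The main obstacle is step 3 combined with the bookkeeping of constants: the nonlinear cross term couples $e_p^{k+1}$ in the dG norm with $e_T^{k+1}$. It must be absorbed using part of $\|e_p^{k+1}\|_{dG,p}^2$, whose available surplus is exactly $c_1-b_{\alpha\beta}^2/2-\alpha^2/\lambda^2$ (after converting via Lemma~\ref{lem:aux_stab_bound}), and part of $(a_0-a_1)\|e_T^{k+1}\|^2$. I would first choose Young weights $\epsilon=a_1/2$ in the cross term and check that the smallness hypothesis on $|T_h^k|_{dG,\infty}$, applied to the limit $T_h$ or uniformly to the iterates, makes the absorbed coefficient at most half of each surplus. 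Then I would verify that the leftover amplification factor on $(e_T^k,e_\varphi^k)$ is strictly less than one.
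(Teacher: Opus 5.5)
There is a genuine gap: nothing in your argument shows the amplification factor is below one, and the hypotheses \eqref{eq:convergece_ass} are never actually used. You state that the three estimates ``should give'' $\mathcal{E}^{k+1}\le\rho\,(A\|e_T^k\|^2+B\|e_\varphi^k\|^2)$ with $\rho<1$, and then stop. Because you test each subproblem separately, your estimates generate only $c_\alpha$, $a_\beta-a_1$, $\mathbb{B}$, the coupling coefficients $|b_{\alpha\beta}|$, $\alpha/\lambda$, $\beta/\lambda$, and norms of $\mathbf{K}$ in the convective term. The constants $b_1,c_1$ of Lemma~\ref{lem:aux_stab_bound}, and $a_1$ in that role (you use $a_1$ only as the convection threshold), cannot enter. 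That lemma bounds the full form $\mathcal{M}((p,T,\varphi),(p,T,\varphi))+\mathcal{D}_h(\varphi,\varphi)$, and this form never sits on the left of a single-subproblem estimate. So the following remarks are assertions, not derivations:
``this is where $c_1$ enters'' (in your Step 1 the coercivity constant is $c_\alpha$);
``$b_1\lesssim 1$ is used for the boundedness constants'';
``the condition on $\beta^2/\lambda^2$ balances the $e_T$ contribution'' (a lower bound on $\beta^2/\lambda^2$ cannot help absorb anything).

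If you actually chain the bounds, you get:
flow, $\|e_p^{k+1}\|\le c_\alpha^{-1}(|b_{\alpha\beta}|\|e_T^k\|+\tfrac{\alpha}{\lambda}\|e_\varphi^k\|)$;
heat, $\|e_T^{k+1}\|\le (a_\beta-a_1)^{-1}(|b_{\alpha\beta}|\|e_p^{k+1}\|+\tfrac{\beta}{\lambda}\|e_\varphi^k\|+\dots)$;
mechanics, $\|e_\varphi^{k+1}\|\lesssim (1+\mathbb{B}^2\lambda)^{-1}(\alpha\|e_p^{k+1}\|+\beta\|e_T^{k+1}\|)$.
The factor in front of $\|e_\varphi^k\|$ is then $(1+\mathbb{B}^2\lambda)^{-1}$ times a bracket containing $\frac{\alpha^2/\lambda}{c_\alpha}+\frac{\beta^2/\lambda}{a_\beta-a_1}$. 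Each of these terms is close to $1$ when $c_0$ and $a_0-a_1$ are small relative to $\alpha^2/\lambda$ and $\beta^2/\lambda$. Chaining therefore gives a contraction only if $\mathbb{B}^2\lambda$ is large, which is not a hypothesis.

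Your ``main obstacle'' paragraph has the same defect. The convective term is controlled by $\|e_p^{k+1}\|_{dG,p}\|e_T^{k+1}\|$, whereas $c_1$ multiplies an $L^2$ norm of the pressure. Moreover, in a sequential chain $e_p^{k+1}$ is already fixed by the flow step, so it cannot be absorbed. It can only be replaced by the flow bound, which brings in $c_\alpha^{-1}$ or $k_m^{-1}$.

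The missing idea is the one the paper uses: work with the increments $\delta_x^k=x_h^{k+1}-x_h^k$ and test the increment system \eqref{eq:error_eq_2} with $(\boldsymbol{\delta}_{\mathbf{u}}^k,\delta_p^k,\delta_T^k,\delta_\varphi^k)$ all at once. After adding and subtracting the coupling terms, the left side holds $\mathcal{M}$ evaluated on $(\delta_p^k,\delta_T^k,\delta_\varphi^k)$, together with $\mathcal{D}_h(\delta_\varphi^k,\delta_\varphi^k)$ and the diagonal $\mathcal{A}_h^p$, $\mathcal{A}_h^T$, $\mathcal{A}_h^e$ terms. Lemma~\ref{lem:aux_stab_bound} then supplies $a_1,b_1,c_1$. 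The splitting leaves only these terms on the right:
$\mathcal{M}_{pT}(\delta_T^k-\delta_T^{k-1},\delta_p^k)$, $\mathcal{M}_{p\varphi}(\delta_p^k,\delta_\varphi^k-\delta_\varphi^{k-1})$, $\mathcal{M}_{T\varphi}(\delta_T^k,\delta_\varphi^k-\delta_\varphi^{k-1})$ and $\mathcal{C}_h(T_h^k,\delta_p^k,\delta_T^k)$.
Young's inequality with $\epsilon_c=2$, $\epsilon_{pT}=b_{\alpha\beta}^2/2$, $\epsilon_{p\varphi}=\alpha^2/\lambda^2$, $\epsilon_{T\varphi}=\beta^2/\lambda^2$ gives \eqref{eq:error_eq_14}. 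Its coefficients are exactly the quantities constrained in \eqref{eq:convergece_ass}. The paper then asserts the contraction tersely, but its bookkeeping targets the right quantities. Adding, rather than chaining, your three tested error equations would reproduce this structure for $e^k$.

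Working with increments also repairs two further points in your setup. First, Theorem~\ref{thm:stability} only bounds one F-H-M iterate by the previous one and gives no existence for the nonlinear problem \eqref{eq:TPE_discrete_4fields}. Geometric decay of the increments instead makes $(\mathbf{X}_h^k)$ Cauchy in a finite-dimensional space and produces the limit. Second, your splitting of $\mathcal{C}_h$ needs smallness of $|T_h|_{dG,\infty}$ for the unknown limit, while \eqref{eq:convergece_ass} controls only the iterates $T_h^k$. The paper's $\pm\,\mathcal{C}_h(T_h^k,p_h^{k+1},S_h)$ splitting uses exactly that.
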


\section{Numerical results}
\label{sec:NumericalResults}
\begin{figure}[ht]
	\centering
	\input{Fig/Splitting_F_H_M.tikz}
	\caption{Graphical representation of the \textbf{F-H-M} splitting solution strategy}
\end{figure}

The aim of this section is to assess the performance of the scheme in terms of accuracy and robustness. For the two-dimensional simulations, the sequences of polygonal Voronoi meshes has been generated with \texttt{Polymesher} \cite{Talischi2012}. All the penalty coefficients $\alpha_i$, $i=1,\dots,4$ in \eqref{eq:TPE_stabilization_func} are set equal to $10$. The simulations in the two-dimensional setting have been carried out in \texttt{lymph} \cite{Antonietti2025}, while we have used \texttt{FEniCS} \cite{Alnaes2015} for the three-dimensional ones.

Thanks to the introduction of the total pressure stabilization term, we can use equal order approximations for the four unknowns of the problem. Thus, in every test we set $\ell = m$ and, for the sake of simplicity, we make use only of the symbol $\ell$ to denote the polynomial degree.


\subsection{Convergence Test in 2D}
\label{sec:ConvTest_2D}
We start the analysis by assessing the performance of the method in terms of accuracy. We consider problem \eqref{eq:TPE_system} in the domain $\Omega = (0,1)^2$ with the following manufactured analytical solution:
\begin{equation}
	\begin{aligned}
		\mathbf{u}(x,y) & \ = \left( \begin{aligned}
			& \sin (2 \pi y) \, (\cos(2 \pi x)-1) + \frac{1}{\mu + \lambda}\, \sin(\pi x) \sin(\pi y) \\
			& \sin (2 \pi x) \, (1-\cos(2 \pi y)) + \frac{1}{\mu + \lambda}\, \sin(\pi x) \sin(\pi y)
		\end{aligned} \right), \\[5pt]
		p(x,y) & \ = \sin(\pi x) \sin(\pi y), \ \
		T(x,y) = \sin(\pi x) \sin(\pi y),
	\end{aligned}
\end{equation}
through which we infer both the boundary conditions and forcing terms. The model coefficients are reported in Table~\ref{tab:TPE_params_convtest}, see e.g., \cite{Li2025}. 
\begin{table}[ht]
	\centering 
	\footnotesize
	\begin{tabular}{l | c c l | c c l | c}
		$a_0 \ [\si[per-mode = symbol]{\pascal \per \kelvin\squared}]$ & 0.2 & & $\alpha \ [-]$ & 0.1 & & $\mu, \lambda \ [\si{\pascal}]$ & 1, 20  \\
		$b_0 \ [\si{\per \kelvin}]$ & 0.1 & & $\beta \ [\si{\pascal \per \kelvin}]$ & 0.1 & & $\mathbf{K} \ [\si{\meter \squared \per \pascal \per \second}]$ & 1 \\
		$c_0 \ [\si{\per \pascal}]$ & 0.3 & & $c_f \ [\si{\pascal \per \kelvin\squared}]$ & 0.1 & & $\boldsymbol{\Theta} \ [\si{\meter \squared \pascal \per \kelvin\squared \per \second}]$ & 1
	\end{tabular}
	\caption{Test case of Section~\ref{sec:ConvTest_2D}: problem parameters for the convergence analysis.}
	\label{tab:TPE_params_convtest}
\end{table}

The convergence of the PolyDG scheme is tested both with respect to the mesh size $h$ and to the polynomial degree $\ell$. For the $h$-convergence a sequence of polygonal Voronoi meshes is considered and we set the polynomial degree $\ell = 2$. For what concerns the convergence with respect to $\ell$ and for a fixed mesh size, we fix a computational mesh of $50$ elements and vary the polynomial degree $\ell = 1,2,\dots,8$.
In Figure~\ref{fig:ConvH_2D}, Figure~\ref{fig:ConvP} we show the computed errors in the $L^2$- and $dG$-norms defined as in \eqref{eq:RobQSTPE_DG_norms} versus $h$ and $\ell$, respectively. In both cases, we observe that the results match the predicted convergence rates in the framework of PolyDG spatial discretizations \cite[Theorem 5.3]{AntoniettiBonetti2022}. 

\input{Fig/ConvH_2D.tikz}
\input{Fig/ConvP.tikz}

For what concerns the convergence vs $h$ we observe that, by using $\ell = 2$, the $L^2$- and $dG$-errors for all the three unknowns go to zero as $h^3$ and $h^2$, respectively, as predicted by the theory (see e.g., \cite{AntoniettiBonetti2022}). We point out that the lines representing the error trends for pressure and temperature are superimposed. For what concerns the $L^2$-errors, we achieve $h^{\ell+1}$ convergence. Looking at the convergence with respect to $\ell$ we see that in the two cases, both for the $L^2$- and the $dG$-errors, we observe an exponential decrease of the error.

We conclude this section by investigating the iteration counts that are needed for achieving the desired convergence of the three different schemes. We observe that a trivial comparison of the iteration counts of the three methods is not sufficient, since the specific iteration will have a different computational cost. For the three methods we consider the same stopping criterion: we stop the scheme when the norm of the absolute difference of two successive iterations is below $\num[exponent-product=\ensuremath{\cdot}, print-unity-mantissa=false]{1e-6}$ or when the norm of the relative difference of two successive iterations is below $\num[exponent-product=\ensuremath{\cdot}, print-unity-mantissa=false]{1e-6}$ as in \cite{Brun2020}. In Table~\ref{tab:ConvH_itertimes}, Table~\ref{tab:ConvP_itertimes} we display the iteration counts for the three methods and the corresponding speed-ups (for the splitting solution strategies) in the $h$-convergence test and in the $\ell$-convergence test, respectively. The correspondiing computational times [\si{\second}] can be found in \ref{sec:appendix}. First, we observe that all the three solution strategy converge for every refinement considered and that the iteration counts are independent of the number of elements in the mesh. We remark that this makes the solution strategies extremely scalable with respect to $N_{el}$. For what concerns the convergence with respect to the mesh size $h$, we also notice a significant speed-up for the two splitting solution strategies -- that perform in a similar way -- with respect to \textbf{FHM}, while the speed-up is less evident in the convergence test vs the polynomial degree of approximation. We observe that one of the reasons for which we obtain this speed-up relies in the fact that \texttt{lymph} solves the resulting linear systems in a direct fashion. Indeed, in the splitting solution strategies it is possible to store the factorization of the matrices related to \textbf{F}, \textbf{M}, and \textbf{FM} problems computed at the first iteration and then exploit them in the remaining iterations.

\begin{table}[ht]
	\centering 
	\footnotesize
	\begin{tabular}{c c c c c c c c c c}
		& $N_{\text{el}}$ & $50$ & $100$  & $310$ & $1000$  & $3100$ & $10000$  & $31000$ & $100000$ \B\\
		\hline
		\textbf{FHM} 
		& \#it & $4$ & $4$ & $4$ & $4$ & $4$ & $4$ & $4$ & $4$ \T\B \\
		\hline
		\multirow{2}{*}{\textbf{FM-H}} 
		& \#it & $4$ & $4$ & $4$ & $4$ & $4$ & $4$ & $4$ & $4$ \T\\
		& Speed-up & 1.19 & 1.31 & 1.49 & 1.73 & 2.13 & 2.58 & 3.30 & 4.83 \B\\
		\hline
		\multirow{2}{*}{\textbf{F-H-M}} 
		& \#it & $3$ & $3$ & $3$ & $3$ & $3$ & $3$ & $3$ & $3$ \T\\
		& Speed-up & 0.46 & 1.31 & 1.59 & 1.91 & 2.29 & 2.86 & 3.69 & 4.93 \B\\ 
		\hline
	\end{tabular}
	\caption{Convergence test vs $h$ in 2D: iteration counts of the three solution algorithms for the convergence and speed-ups of the splitting solution algorithms with respect to the \textbf{FHM} scheme versus the  number of elements. The computational times [\si{\second}] are reported in \ref{sec:appendix}.}
	\label{tab:ConvH_itertimes}
\end{table}

\begin{table}[H]
	\centering 
	\footnotesize
	\begin{tabular}{c c c c c c c c c c}
		& $\ell$ & $1$  & $2$ & $3$  & $4$ & $5$  & $6$ & $7$ & $8$ \B\\
		\hline
		\textbf{FHM}
		& \#it & $4$ & $4$ & $4$ & $4$ & $4$ & $4$ & $4$ & $4$ \T\B\\
		\hline
		\multirow{2}{*}{\textbf{FM-H}} 
		& \#it  & $4$ & $4$ & $4$ & $4$ & $4$ & $4$ & $4$ & $4$ \T\\
		& Speed-up & 3.19 & 1.45 & 1.49 & 1.59 & 1.80 & 1.82 & 1.87 & 1.96 \T\B \\
		\hline
		\multirow{2}{*}{\textbf{F-H-M}} 
		& \#it  & $4$ & $4$ & $4$ & $4$ & $4$ & $4$ & $4$ & $4$ \T\\
		& Speed-up & 3.49 & 1.41 & 1.37 & 1.44 & 1.58 & 1.51 & 1.17 & 1.13 \T\B \\
		\hline
	\end{tabular}
	\caption{Convergence test vs $\ell$ in 2D: iteration counts of the three solution algorithms for the convergence and speed-ups of the splitting solution algorithms with respect to the \textbf{FHM} scheme versus the polynomial degree of approximation. The computational times [\si{\second}] are reported in \ref{sec:appendix}.}
	\label{tab:ConvP_itertimes}
\end{table}

\subsection{Convergence Test in 3D}
\label{sec:ConvTest_3D}
We now assess the performance of the solution strategies, in terms of accuracy and computational costs, in a three-dimensional setting. We consider problem \eqref{eq:TPE_system} in the domain $\Omega = (0,1)^3$ with the following manufactured analytical solution:
\begin{equation}
	\begin{aligned}
		\mathbf{u}(x,y) & \ = \left( \begin{aligned}
			& 2 (\cos(2 \pi x)-1) \, \sin (2 \pi y) \, \sin (2 \pi z) + \frac{1}{\mu + \lambda}\, \sin(\pi x) \sin(\pi y) \sin(2\pi z)\\
			& \sin (2 \pi x) \, (1-\cos(2 \pi y)) \, \sin(2 \pi z) + \frac{1}{\mu + \lambda}\, \sin(\pi x) \sin(\pi y) \sin(2\pi z) \\
			& \sin (2 \pi x) \, \sin(2\pi y) \, (1-\cos(2 \pi z)) + \frac{1}{\mu + \lambda}\, \sin(\pi x) \sin(\pi y) \sin(2\pi z) \\
		\end{aligned} \right), \\[5pt]
		p(x,y) & \ = \sin(\pi x) \sin(\pi y) \sin(\pi z), \ \
		T(x,y) = \sin(\pi x) \sin(\pi y) \sin(\pi z),
	\end{aligned}
\end{equation}
through which we infer the boundary conditions and forcing terms. The model coefficients are reported in Table~\ref{tab:TPE_params_convtest} and are the same of Section~\ref{sec:ConvTest_2D}. The convergence of the DG scheme is tested with respect to the mesh size $h$. To this aim, a sequence of theatrahedral meshes is considered and we set the polynomial degree $\ell = 1$. In Figure~\ref{fig:ConvH_3D} we show the computed errors in the $L^2$- and $dG$-norms defined as in \eqref{eq:RobQSTPE_DG_norms} versus $h$ and and we observe that the results match the predicted convergence rates in the framework of PolyDG spatial discretizations \cite[Theorem 5.3]{AntoniettiBonetti2022}. Indeed, we observe that, by using $\ell = 1$, the $L^2$- and $dG$-errors for all the three unknowns go to zero as $h^2$ and $h$, respectively.

\input{Fig/ConvH_3D.tikz}

As before, we conclude this section by investigating the iteration counts for the three different schemes and the associated speed-ups. We consider the same stopping criterion as in Section~\ref{sec:ConvTest_2D}. In Table~\ref{tab:ConvH3D_itertimes}, we display the iteration counts needed for the convergence of the three methods and the speed-ups (for the splitting solution strategies) in the $h$-convergence test. The corresponding computational times [\si{\second}] can be found in \ref{sec:appendix}. As observed for the two-dimensional setting, we can appreciate the scalability of the solution algorithms with respect to the considered number of elements. Also in this case we observe a good speed-up, especially for the \textbf{F-H-M} strategy, even if it is less evident with respect to the the two-dimensional setting.

\begin{remark}
	In this setting, the linear systems cannot be solved with a direct method, but an iterative solver needs to be introduced. To compare the different solution strategies without influencing the results by the solvers of the individual subproblems, we consider the solution of all linear systems using \texttt{GMRES} with an \texttt{ILU} preconditioner. We note that a more extensive investigation could be conducted for the individual subproblems, also taking into account the robustness of the solution strategies with respect to the model’s physical parameters (cf. \ref{sec:RobTest}). The implementation of these techniques would improve the performance of splitting methods compared to the \textbf{FHM} strategy, allowing us to achieve a larger speed-up. This will be the subject of future work. 
\end{remark}

\begin{table}[H]
	\centering 
	\footnotesize
	\begin{tabular}{c c c c c c c c c c}
		& $N_{\text{el}}$ & $48$ & $1296$  & $6000$ & $16464$  & $34992$ & $63888$  & $105456$ & $162000$ \B\\
		\hline
		\textbf{FHM} 
		& \#it & $3$ & $3$ & $3$ & $4$ & $4$ & $4$ & $4$ & $4$ \T\B\\
		\hline
		\multirow{2}{*}{\textbf{FM-H}} 
		& \#it & $3$ & $3$ & $3$ & $3$ & $3$ & $3$ & $3$ & $3$ \T\\
		& Speed-up & $1.05$ & $1.39$ & $1.39$ & $1.69$ & $1.73$ & $1.73$ & $1.64$ & $1.60$ \T\B\\  
		\hline
		\multirow{2}{*}{\textbf{F-H-M}} 
		& \#it & $3$ & $3$ & $3$ & $3$ & $3$ & $3$ & $3$ & $3$ \T\\
		& Speed-up & $1.00$ & $1.82$ & $1.91$ & $2.38$ & $2.48$ & $2.41$ & $2.56$ & $3.07$ \T\B\\
		\hline
	\end{tabular}
	\caption{Convergence test vs $h$ in 3D: iteration counts of the three solution algorithms for the convergence and speed-ups of the splitting solution algorithms with respect to the \textbf{FHM} scheme versus the  number of elements. The computational times [\si{\second}] are reported in \ref{sec:appendix}.}
	\label{tab:ConvH3D_itertimes}
\end{table}

\subsection{Robustness test}
\label{sec:RobTest}
The aim of this Section is to investigate the robustness properties of the solution strategies. In \cite{AntoniettiBonetti2022} the robustness of the four-fields PolyDG approximation of the TPE problem, coupled with the fixed-point iteration strategy (cf. \textbf{FHM} scheme) has already been addressed for the quasi-static problem. Moreover, in \cite{Bonetti2024} the robustness of this formulation has been studied for the steady problem, with a particular focus on the nonlinear convection term. We focus on the four test (inspired by \cite{AntoniettiBonetti2022, Li2025}) listed in Table~\ref{tab:TPE_params_robtest}, while the parameters $\alpha$, $\beta$, $\mu$, and $c_f$ are chosen as in Table~\ref{tab:TPE_params_convtest}. The space discretization parameters and the stopping criterion are chosen as in the $h$-convergence test case presented in Section~\ref{sec:ConvTest_2D}.

\begin{table}[H]
	\centering 
	\footnotesize
	\begin{tabular}{l c c c c}
		\textbf{Coefficient} & \textbf{Test $\mathbf{(i)}$} & \textbf{Test $\mathbf{(ii)}$} & \textbf{Test $\mathbf{(iii)}$} & \textbf{Test $\mathbf{(iv)}$} \B\\
		\hline 
		$a_0 \ [\si{\pascal \per \kelvin\squared}]$ & 0 & 0.2 & 0.2 & 0.2 \T\\
		$b_0 \ [\si{\per \kelvin}]$ & 0 & 0.1 & 0.1 & 0.1  \\
		$c_0 \ [\si{\per \pascal}]$ & 0 & 0.3 & 0.3 & 0.3  \\
		$\lambda \ [\si{\pascal}]$ & \num[exponent-product=\ensuremath{\cdot}, print-unity-mantissa=false]{e9} & 1 & 1 & 1  \\
		$\mathbf{K} \ [\si{\meter \squared \per \pascal \per \second}]$ & $\mathbf{I}$ & \num[exponent-product=\ensuremath{\cdot}, print-unity-mantissa=false]{e-9}$\mathbf{I}$  & $\mathbf{I}$ & \num[exponent-product=\ensuremath{\cdot}, print-unity-mantissa=false]{e-9}$\mathbf{I}$  \\
		$\boldsymbol{\Theta} \ [\si{\meter \squared \pascal \per \kelvin\squared \per \second}]$ & $\mathbf{I}$  & \num[exponent-product=\ensuremath{\cdot}, print-unity-mantissa=false]{e-9}$\mathbf{I}$  & \num[exponent-product=\ensuremath{\cdot}, print-unity-mantissa=false]{e-9}$\mathbf{I}$ & $\mathbf{I}$
	\end{tabular}
	\caption{Test cases of Section~\ref{sec:RobTest}: problem parameters for the robustness analysis.}
	\label{tab:TPE_params_robtest}
\end{table}

We recall that increasing the value of the first Lamè coefficient $\lambda \gg 1$ means going towards the quasi-incompressible limit. In Figure~\ref{fig:Rob} we report the results of the robustness tests for the three solution strategies. Along the rows of Figure~\ref{fig:Rob} we report the dG-errors for the displacement, pressure, and temperature fields, respectively, while along the columns we report the errors for the same field in different model parameters' configurations. Then, for every subplot of Figure~\ref{fig:Rob} we compare the performance of the three solution strategies keeping fixed the test case and the unknown of interest. First, we notice that for (almost) all the test cases and for all the unknowns the computed results for the three schemes are very similar. The only exception is the first test case $\mathbf{(iv)}$ for which the \textbf{FHM} fails in solving the last refinement. However, the results for previous mesh levels are the same for the three schemes. In terms of robustness of the method, the results are coherent with our expectations and with what has been observed in \cite{AntoniettiBonetti2022, Bonetti2024}. Indeed, the schemes are robust in all the four tested scenarios and we always achieve convergence with the desired convergence rate, i.e. $h^{\ell}$ (as already shown in Section~\ref{sec:ConvTest_2D}, Section~\ref{sec:ConvTest_3D}). By looking at the absolute values of the errors, we observe that the displacement-error values are the same for all the tests. For what concerns the pressure and temperature errors, we notice that their values are also affected by the absolute values of the conductivity tensors, that enter in the definition of the $dG$-norms, cf. \eqref{eq:RobQSTPE_DG_norms}. We remark that, with the stabilized convective trilinear form, we have achieve convergence also in the advection-dominated regime $\mathbf{(iii)}$. The results are in agreement with what observed in \cite{AntoniettiBonetti2022, Bonetti2024}.
\input{Fig/Rob.tikz}

In Table~\ref{tab:Rob1_itertimes}--~\ref{tab:Rob4_itertimes} we report the iteration counts for the convergence of the three methods and the computed speed-ups (for the splitting schemes) versus the number of elements. The corresponding computational times [\si{\second}] can be found in \ref{sec:appendix}. As already observed for the convergence test, we see that all the solution strategies are scalable with respect to the number of elements, as the iteration counts for achieving convergence are independent with respect to the number of elements considered in the mesh. Second, we observe that in the first three tests -- especially in $\mathbf{(i)}$, $\mathbf{(iii)}$ -- we achieve a remarkable speed-up with the use of splitting strategies with respect to the \textbf{FHM} scheme. There is no clear evidence that one of the two splitting strategies can guarantee better results in terms of computational times, but it seems that when very small coefficients appear in the model, keeping the coupling and not splitting the overall problem can give better results with respect to solving the subproblems separately. We remark that these considerations are in agreement with the theory developed in \cite{AntoniettiBonetti2022}. This is more evident in the test case $\mathbf{(iv)}$, where the \textbf{FHM} performs better in the first refinements with respect to the splitted strategies. However, we observe that considering a splitting scheme gets more and more advantageous when increasing the number of elements. Indeed, in the last refinement, we are not able to solve the problem with the monolithic scheme, but we achieve convergence with the splitting ones.

\begin{table}[H]
	\centering 
	\footnotesize
	\begin{tabular}{c c c c c c c c c c}
		& $N_{\text{el}}$ & $50$ & $100$  & $310$ & $1000$  & $3100$ & $10000$  & $31000$ & $100000$ \B\\
		\hline
		\textbf{FHM}
		& \#it & $5$ & $3$ & $3$ & $3$ & $3$ & $3$ & $3$ & $3$ \T\B\\
		\hline
		\multirow{2}{*}{\textbf{FM-H}} 
		& \#it & $3$ & $3$ & $3$ & $3$ & $3$ & $3$ & $3$ & $3$ \T\\
		& Speed-up & 1.59 & 1.30 & 1.45 & 1.64 & 1.98 & 2.34 & 2.91 & 4.12  \B\\
		\hline
		\multirow{2}{*}{\textbf{F-H-M}} 
		& \#it & $2$ & $2$ & $2$ & $2$ & $2$ & $2$ & $2$ & $2$ \T\\
		& Speed-up & 0.57 & 1.39 & 1.66 & 2.00 & 2.40 & 2.99 & 3.91 & 5.23 \B\\
		\hline
	\end{tabular}
	\caption{Robustness test $\mathbf{(i)}$: iteration counts of the three solution algorithms for the convergence and speed-ups of the splitting solution algorithms with respect to the \textbf{FHM} scheme versus the  number of elements. The computational times [\si{\second}] are reported in \ref{sec:appendix}.}
	\label{tab:Rob1_itertimes}
\end{table}

\begin{table}[H]
	\centering 
	\footnotesize
	\begin{tabular}{c c c c c c c c c c}
		& $N_{\text{el}}$ & $50$ & $100$  & $310$ & $1000$  & $3100$ & $10000$  & $31000$ & $100000$ \B\\
		\hline
		\textbf{FHM} 
		& \#it & $2$ & $2$ & $2$ & $2$ & $2$ & $2$ & $2$ & $2$ \T\B\\
		\hline
		\multirow{2}{*}{\textbf{FM-H}} 
		& \#it & $4$ & $4$ & $4$ & $4$ & $4$ & $4$ & $4$ & $4$ \T\\
		& Speed-up & 0.83 & 0.88 & 0.97 & 1.07 & 1.24 & 1.46 & 1.79 & 2.86 \B\\
		\hline
		\multirow{2}{*}{\textbf{F-H-M}} 
		& \#it & $5$ & $5$ & $5$ & $5$ & $5$ & $5$ & $5$ & $5$ \T\\
		& Speed-up & 0.28 & 0.67 & 0.73 & 0.80 & 0.92 & 1.0656 & 1.29 & 1.93 \B\\
		\hline
	\end{tabular}
	\caption{Robustness test $\mathbf{(ii)}$: iteration counts of the three solution algorithms for the convergence and speed-ups of the splitting solution algorithms with respect to the \textbf{FHM} scheme versus the  number of elements. The computational times [\si{\second}] are reported in \ref{sec:appendix}.}
	\label{tab:Rob2_itertimes}
\end{table}

\begin{table}[H]
	\centering 
	\footnotesize
	\begin{tabular}{c c c c c c c c c c}
		& $N_{\text{el}}$ & $50$ & $100$  & $310$ & $1000$  & $3100$ & $10000$  & $31000$ & $100000$ \B\\
		\hline
		\textbf{FHM}
		& \#it & $5$ & $5$ & $5$ & $5$ & $5$ & $5$ & $5$ & $5$ \T\B\\
		\hline
		\multirow{2}{*}{\textbf{FM-H}} 
		& \#it & $5$ & $5$ & $5$ & $5$ & $5$ & $5$ & $5$ & $5$ \T\\
		& Speed-up & 1.17 & 1.33 & 1.53 & 1.81 & 2.27 & 2.82 & 3.67 & 5.59 \B\\
		\hline
		\multirow{2}{*}{\textbf{F-H-M}} 
		& \#it & $5$ & $4$ & $5$ & $5$ & $5$ & $5$ & $5$ & $5$ \T\\
		& Speed-up & 0.44 & 1.33 & 1.33 & 1.56 & 1.88 & 2.32 & 2.94 & 4.06 \B\\
		\hline
	\end{tabular}
	\caption{Robustness test $\mathbf{(iii)}$: iteration counts of the three solution algorithms for the convergence and speed-ups of the splitting solution algorithms with respect to the \textbf{FHM} scheme versus the  number of elements. The computational times [\si{\second}] are reported in \ref{sec:appendix}.}
	\label{tab:Rob3_itertimes}
\end{table}

\begin{table}[H]
	\centering 
	\footnotesize
	\begin{tabular}{c c c c c c c c c c}
		& $N_{\text{el}}$ & $50$ & $100$  & $310$ & $1000$  & $3100$ & $10000$  & $31000$ & $100000$ \B\\
		\hline
		\textbf{FHM}
		& \#it & $2$ & $2$ & $2$ & $2$ & $2$ & $2$ & $2$ & $-$\T\B\\
		\hline
		\multirow{2}{*}{\textbf{FM-H}} 
		& \#it & $3$ & $3$ & $3$ & $3$ & $3$ & $3$ & $3$ & $3$ \T\\
		& Speed-up & 0.48 & 0.49 & 0.54 & 0.60 & 0.71 & 0.86 & 1.21 & $-$ \B\\
		\hline
		\multirow{2}{*}{\textbf{F-H-M}} 
		& \#it & $2$ & $2$ & $2$ & $2$ & $2$ & $2$ & $2$ & $2$ \T\\
		& Speed-up & 0.21 & 0.39 & 0.42 & 0.45 & 0.50 & 0.57 & 0.74 & $-$ \B\\
		\hline
	\end{tabular}
	\caption{Robustness test $\mathbf{(iv)}$: iteration counts of the three solution algorithms for the convergence and speed-ups of the splitting solution algorithms with respect to the \textbf{FHM} scheme versus the  number of elements. The computational times [\si{\second}] are reported in \ref{sec:appendix}.}
	\label{tab:Rob4_itertimes}
\end{table}

\section{Geothermal test case in three-dimensions}
\label{sec:geo3D}
In this section we present the results for a realistic model problem inspired by geothermal energy production \cite{AntoniettiBonetti2022}. We consider a box domain $\Omega = (0,4\si{\kilo \meter}) \times (0,2\si{\kilo \meter}) \times (0,2\si{\kilo \meter})$ and we discretize it with a tetrahedral mesh. We consider different levels of refinement for the mesh: $96000$ tetrahedrons with mesh size $h \sim 0.17 \si{\kilo \meter}$, $324000$ tetrahedrons with mesh size $h \sim 0.12 \si{\kilo \meter}$, $768000$ tetrahedrons with mesh size $h \sim 0.09 \si{\kilo \meter}$, and $1500000$ tetrahedrons with mesh size $h \sim 0.07 \si{\kilo \meter}$. The approximation degree is $\ell = 1$. On top of these meshes, we compare the performance of the three algorithms \textbf{FHM}, \textbf{FM-H}, and \textbf{F-H-M}. In order to mimic the injection and extraction of a fluid (e.g. water) in the subsoil we impose the following set of boundary conditions:
\begin{equation}
	\begin{aligned}
		& \mathbf{u} = 0, && p = p_{\text{inj}}, && T = T_{\text{inj}}, \quad && \text{on} \ \Gamma_{\text{inj}}, \\
		& \mathbf{u} = 0, \quad && p = p_{\text{ext}}, \quad && \gamma (T-T_{\text{ref}}) + \boldsymbol{\Theta} \nabla T \cdot \mathbf{n} = 0, \quad && \text{on} \ \Gamma_{\text{ext}}, \\
		& \boldsymbol{\sigma} \mathbf{n} = 0, \quad && \mathbf{K} \nabla p \cdot \mathbf{n}  = 0, \quad && \gamma (T-T_{\text{ref}}) + \boldsymbol{\Theta} \nabla T \cdot \mathbf{n} = 0, \quad && \text{on} \ \partial \Omega \setminus \left(\Gamma_{\text{inj}} \cup \Gamma_{\text{ext}}\right), \\
	\end{aligned}
\end{equation}
where the parameter $\gamma$ is taken equal to $0.05$, $T_{\text{ref}} = 20\si{\celsius}$, $\Gamma_{\text{inj}} = \{0\} \times (0,2\si{\kilo \meter}) \times (0,2\si{\kilo \meter})$, and $\Gamma_{\text{ext}} = \{4\si{\kilo\meter}\} \times (0,2\si{\kilo \meter}) \times (0,2\si{\kilo \meter})$. The injection temperature profile is taken as and $T_{\text{in}}$ take the following general form
\begin{equation}
	\phi (y,z) = 
	\begin{cases}
		\phi_{m} & \text{if} \ 0 \leq y < a,\\
		\phi_{m} + \frac{\phi_{M}-\phi_{m}}{2}\left(1 - \cos\left(\pi \frac{x - a}{b-a}\right)\right)  & \text{if} \ a \leq y < b,\\
		\phi_{M} & \text{if} \ b \leq y < c,\\
		\phi_{m} + \frac{\phi_{M}-\phi_{m}}{2}\left(1 - \cos\left(\pi \frac{x - c}{d-c}\right)\right) & \text{if} \ c \leq y < d,\\
		\phi_{m} & \text{if} \ d \leq y \leq e,
	\end{cases}
\end{equation}
with the following choice of parameters 
\begin{equation}
	\begin{aligned}
		T_{\text{inj}}= -\phi(x) && \textrm{ with } (a,b,c,d,e,\phi_{m}, \phi_{M}) = (0.7\si{\kilo\meter}, 0.9\si{\kilo\meter}, 1.1\si{\kilo\meter}, 1.3\si{\kilo\meter}, 2\si{\kilo\meter}, 0\si{\celsius},100\si{\celsius}).
	\end{aligned}
\end{equation}

We supplement our problem with zero loading terms $\mathbf{f}, g, h$, zero initial conditions $(\mathbf{u}_0, p_0, T_0)$. We perform the simulations with a realistic choice of parameters, in particular they are taken identical to \cite{Brun2020, Bonetti2023, Bonetti2024}, their values are reported in Table~\ref{tab:geo_model_params}. 
\begin{table}[ht]
	\centering 
	\footnotesize
	\begin{tabular}{l | c c l | c c l | c}
		$a_0 \ [\si[per-mode = symbol]{\mega\pascal \per \kelvin\squared}]$ & \num[exponent-product=\ensuremath{\cdot}]{4.6e-5} & & $\alpha \ [-]$ & 1.0 & & $\mu, \lambda \ [\si{\mega\pascal}]$ & \num[exponent-product=\ensuremath{\cdot}]{2.475e+2}, \num[exponent-product=\ensuremath{\cdot}]{1.65e+2} \\
		$b_0 \ [\si{\per \kelvin}]$ & \num[exponent-product=\ensuremath{\cdot}]{3.03e-11} & & $\beta \ [\si{\mega\pascal \per \kelvin}]$ & \num[exponent-product=\ensuremath{\cdot}]{4.5e-1} & & $\mathbf{K} \ [\si{\kilo \m \squared \per \mega\pascal \per \milli\second}]$ & \num[exponent-product=\ensuremath{\cdot}]{9.87e-8}\textbf{I} \\
		$c_0 \ [\si{\per \mega\pascal}]$ & \num[exponent-product=\ensuremath{\cdot}]{3.03e-4} & & $c_f \ [\si{\mega \pascal \per \kelvin\squared}]$ & 4.186 & & $\boldsymbol{\Theta} \ [\si{\kilo \meter \squared \mega\pascal \per \kelvin\squared \per \milli\second}]$ & \num[exponent-product=\ensuremath{\cdot}]{1.7e-3}
	\end{tabular}
	\caption{Geothermal test case in three dimensions: problem parameters for the geothermal energy production test case}
	\label{tab:geo_model_params}
\end{table}

\begin{figure}[ht!]
	\centering
	
	\begin{subfigure}[b]{.45\textwidth}
		\centering
		\includegraphics[width=0.9\textwidth]{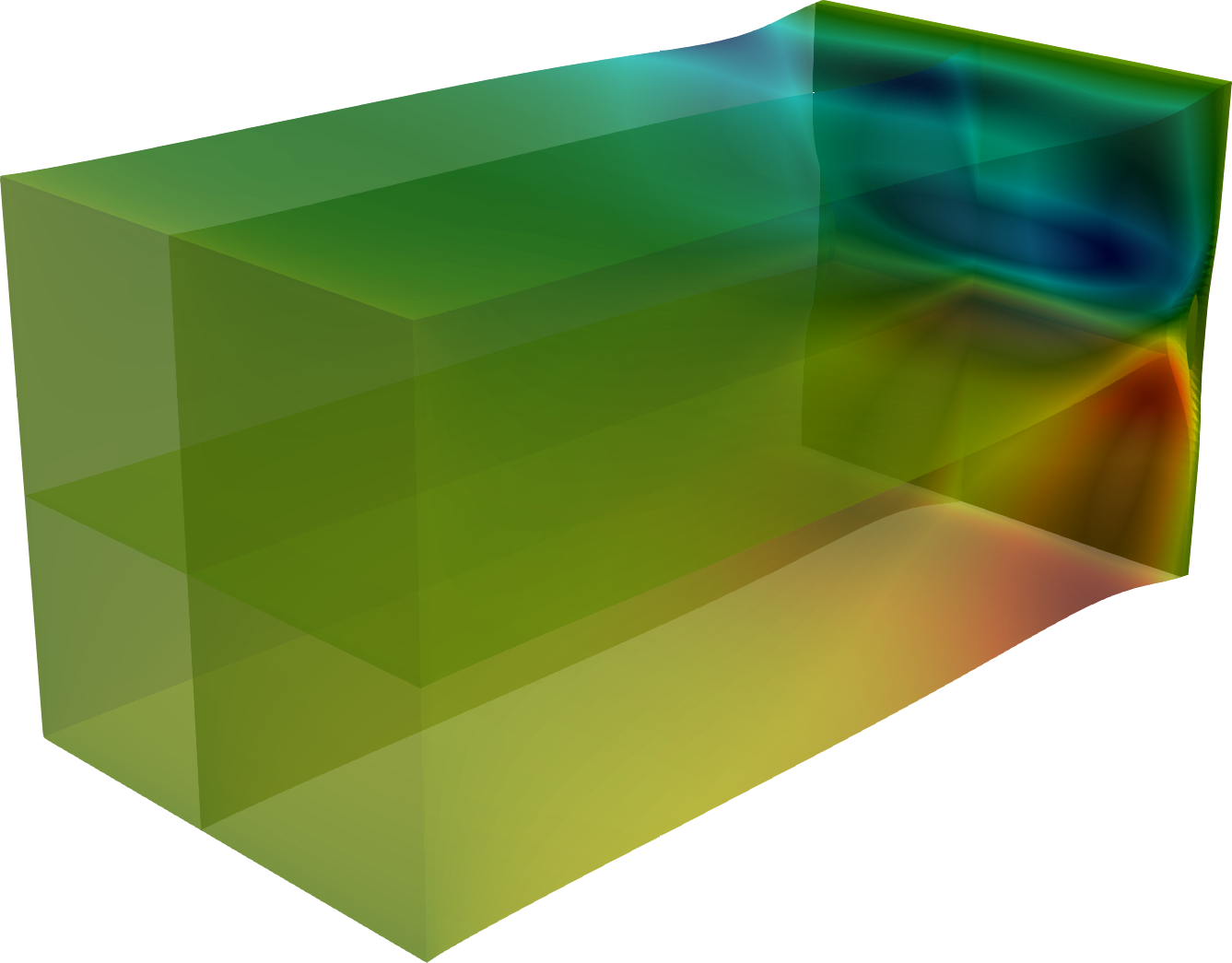}
		\label{fig:uy_F_H_M}
	\end{subfigure}
	\begin{subfigure}[b]{.45\textwidth}
		\centering
		\includegraphics[width=0.9\textwidth]{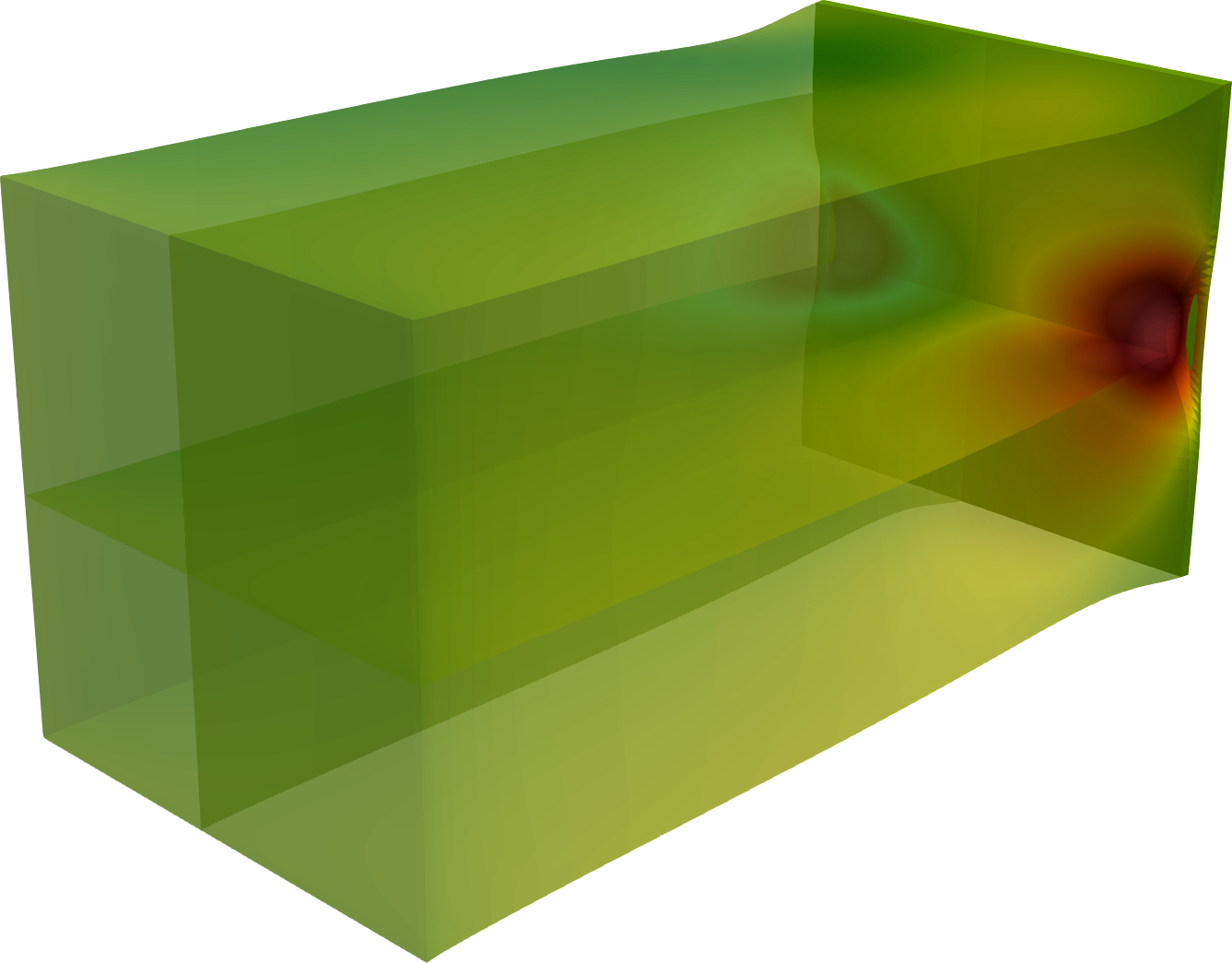}
		\label{fig:uz_F_H_M}
	\end{subfigure}
	
	\begin{subfigure}[b]{.45\textwidth}
		\centering
		\includegraphics[width=0.9\textwidth]{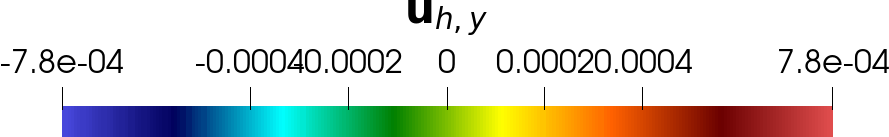}
		\label{fig:uy_F_H_M_cb}
	\end{subfigure}
	\begin{subfigure}[b]{.45\textwidth}
		\centering
		\includegraphics[width=0.9\textwidth]{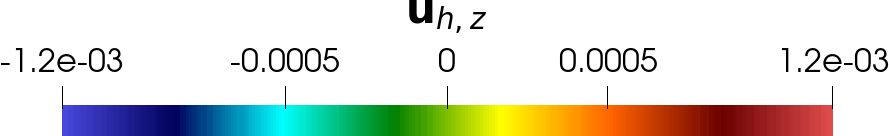}
		\label{fig:uz_F_H_M_cb}
	\end{subfigure}
	
	\vspace{0.5cm}
	
	\begin{subfigure}[b]{.45\textwidth}
		\centering
		\includegraphics[width=0.9\textwidth]{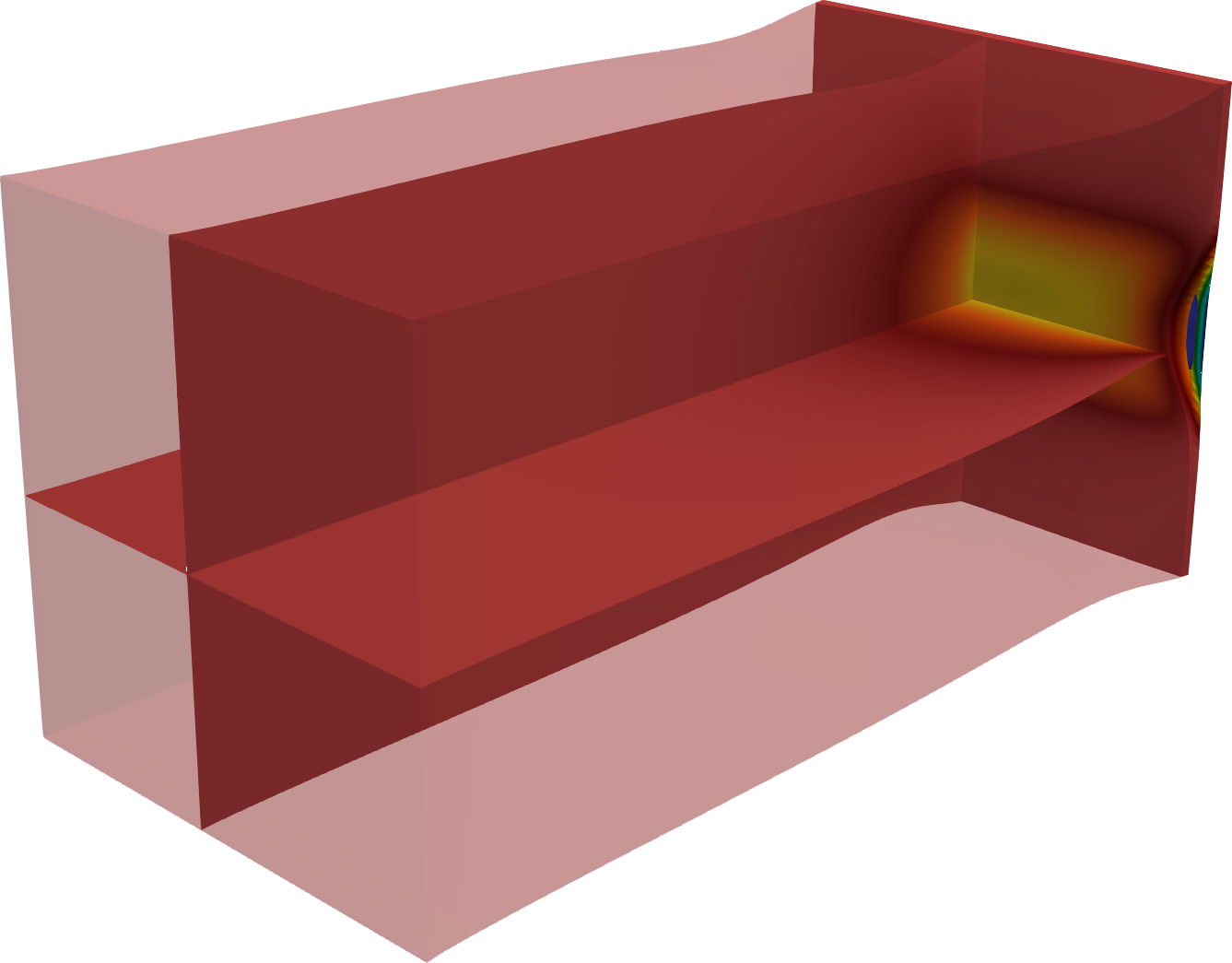}
		\label{fig:T_F_H_M}
	\end{subfigure}
	\begin{subfigure}[b]{.45\textwidth}
		\centering
		\includegraphics[width=0.9\textwidth]{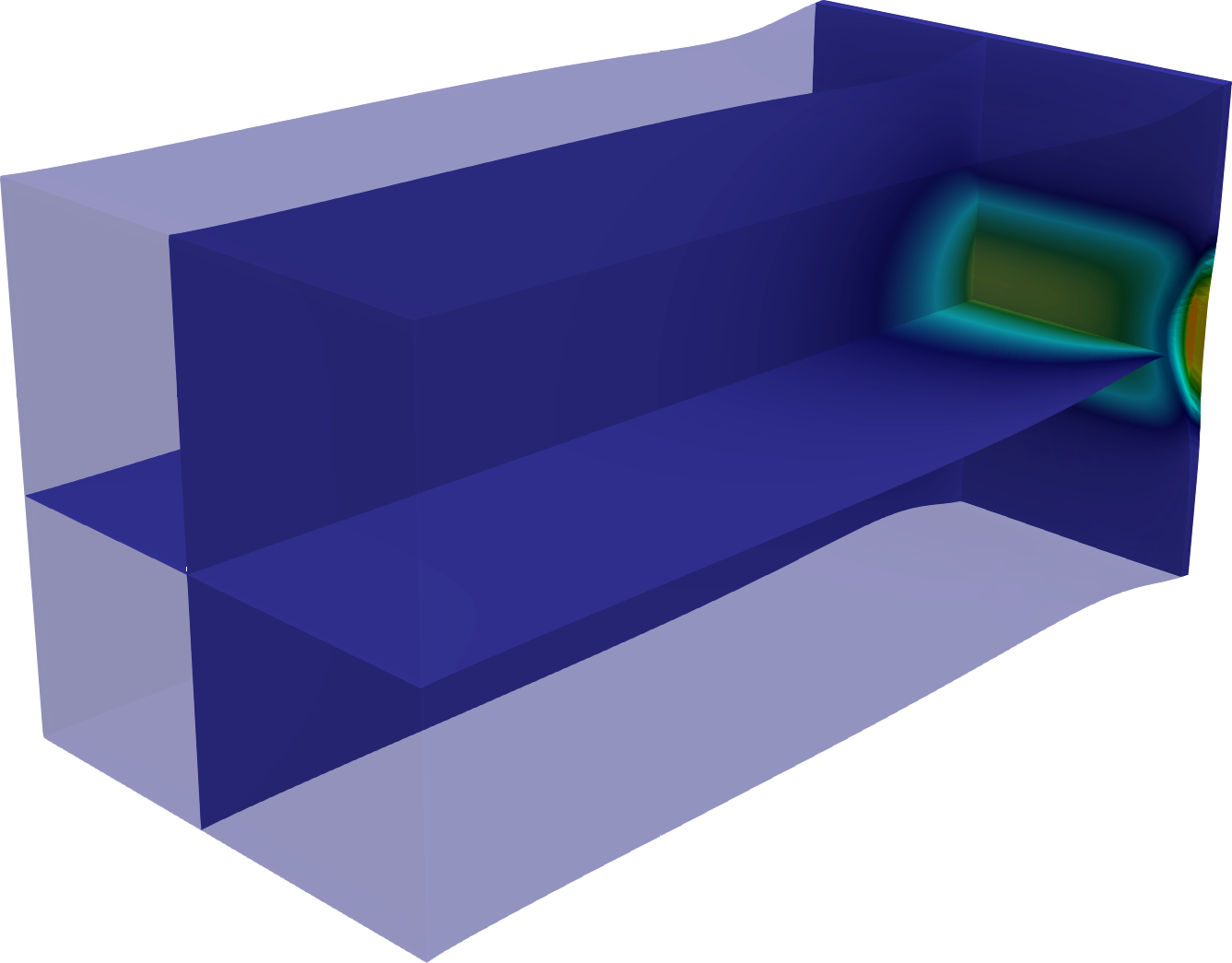}
		\label{fig:p_F_H_M}
	\end{subfigure}
	
	\begin{subfigure}[b]{.45\textwidth}
		\centering
		\includegraphics[width=0.9\textwidth]{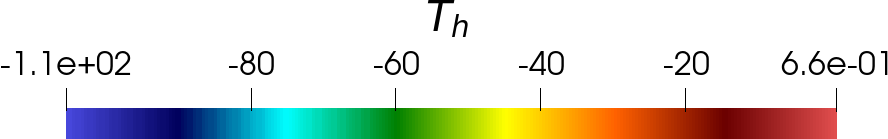}
		\label{fig:T_F_H_M_cb}
	\end{subfigure}
	\begin{subfigure}[b]{.45\textwidth}
		\centering
		\includegraphics[width=0.9\textwidth]{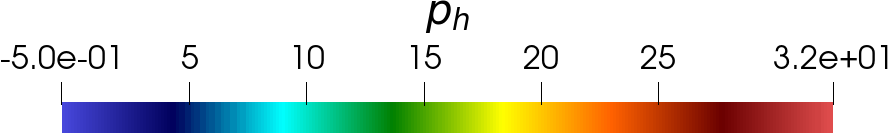}
		\label{fig:p_F_H_M_cb}
	\end{subfigure}
	\caption{Test case of Section~\ref{sec:geo3D}: computed displacement field in $y$- (top-left) and $z-$ (top-right) directions, computed temperature field (bottom-left), and pressure-field (bottom-right). The domain is clipped at $x=0.05$ and the two slices are at $y = 0.9$ and $z = 1.1$. The deformation is magnified by a factor $150$. The solution strategy is the \textbf{F-H-M} scheme and the solution is computed over $1.5$ millions of elements (total number of degrees of freedom $\num[exponent-product=\ensuremath{\cdot}, print-unity-mantissa=false]{3.6e+7}$).}
	\label{fig:geo_F_H_M_3D}
\end{figure}

\begin{figure}[ht!]
	\centering
	
	\begin{subfigure}[b]{.45\textwidth}
		\centering
		\includegraphics[width=0.9\textwidth]{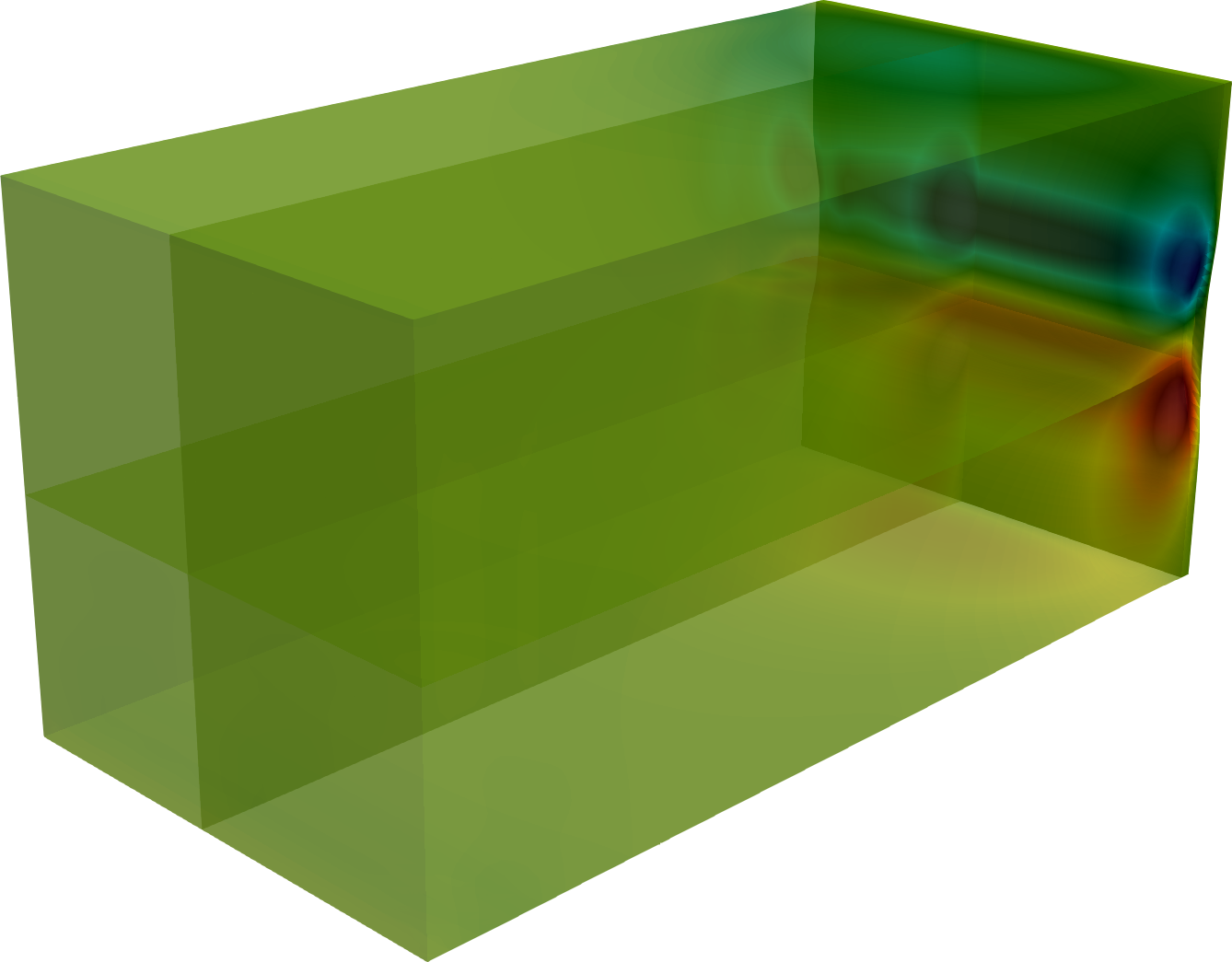}
		\label{fig:uy_FM_H}
	\end{subfigure}
	\begin{subfigure}[b]{.45\textwidth}
		\centering
		\includegraphics[width=0.9\textwidth]{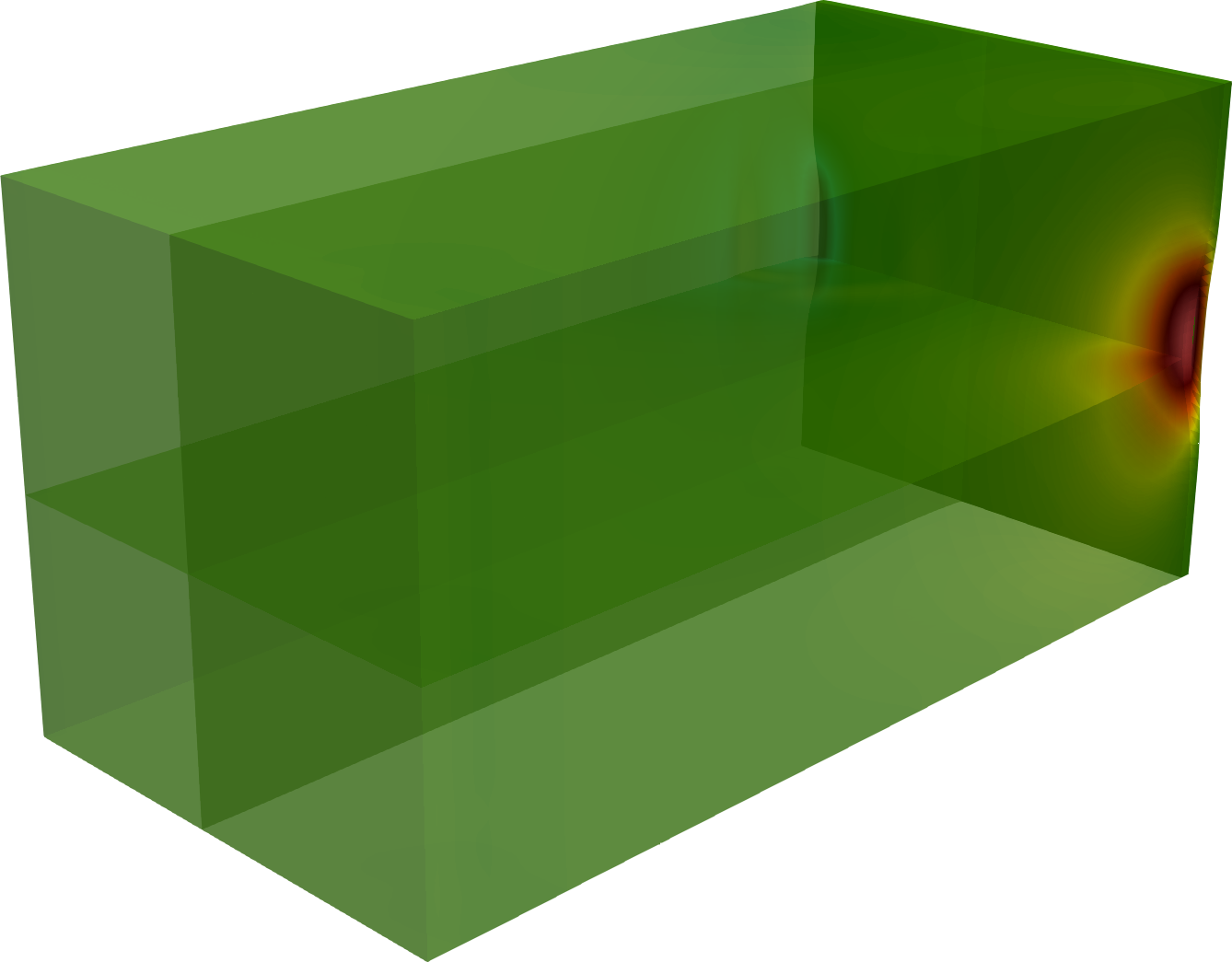}
		\label{fig:uz_FM_H}
	\end{subfigure}
	
	\begin{subfigure}[b]{.45\textwidth}
		\centering
		\includegraphics[width=0.9\textwidth]{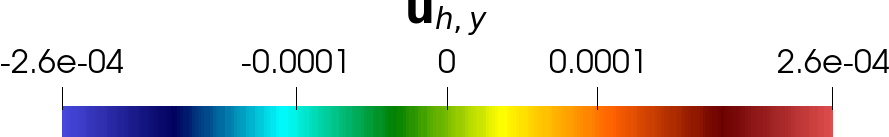}
		\label{fig:uy_FM_H_cb}
	\end{subfigure}
	\begin{subfigure}[b]{.45\textwidth}
		\centering
		\includegraphics[width=0.9\textwidth]{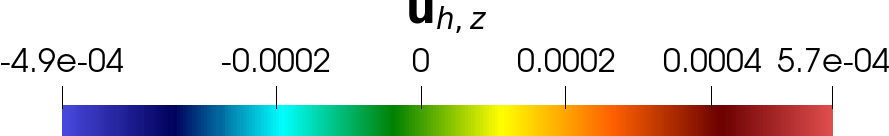}
		\label{fig:uz_FM_H_cb}
	\end{subfigure}
	
	\vspace{0.5cm}
	
	\begin{subfigure}[b]{.45\textwidth}
		\centering
		\includegraphics[width=0.9\textwidth]{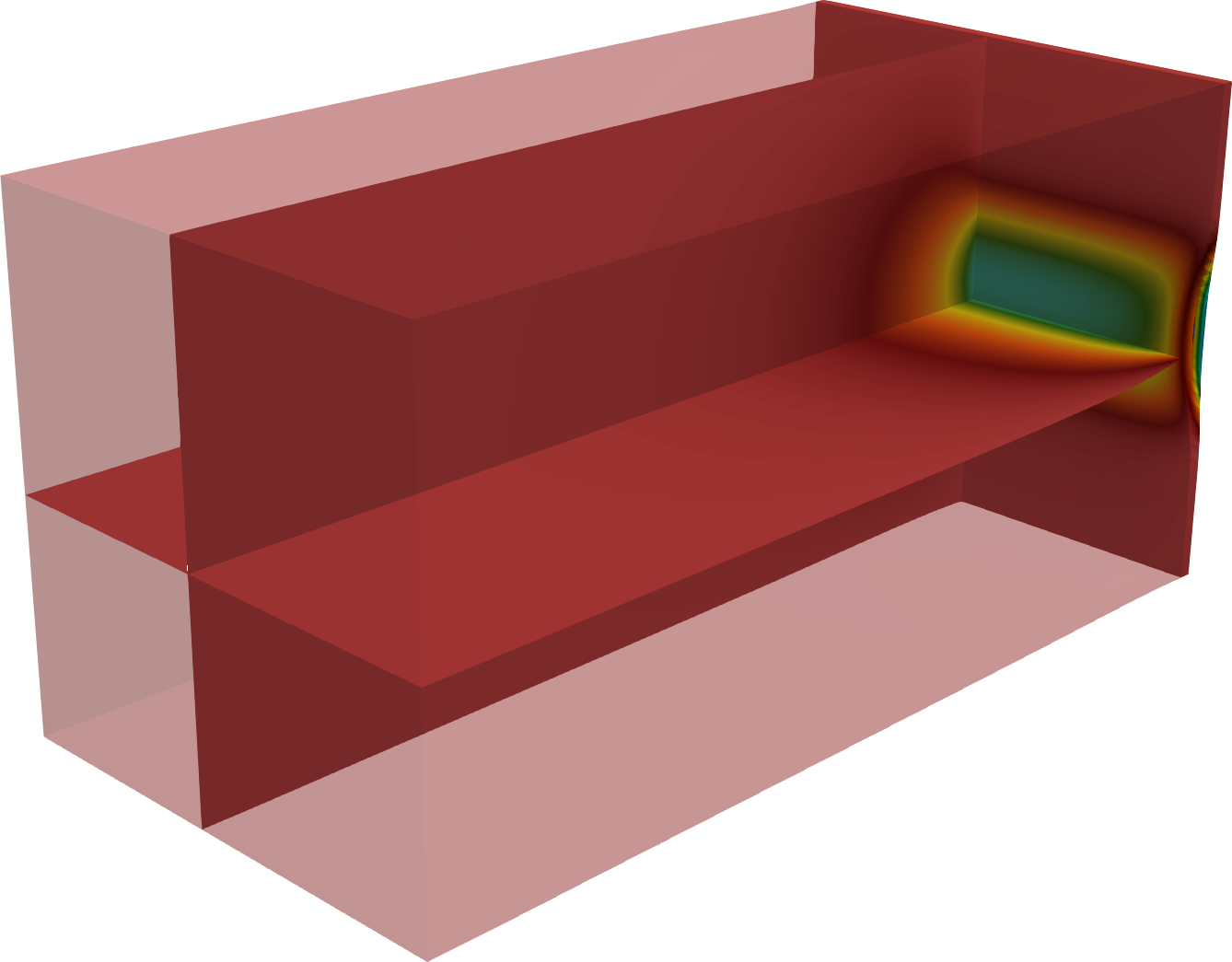}
		\label{fig:T_FM_H}
	\end{subfigure}
	\begin{subfigure}[b]{.45\textwidth}
		\centering
		\includegraphics[width=0.9\textwidth]{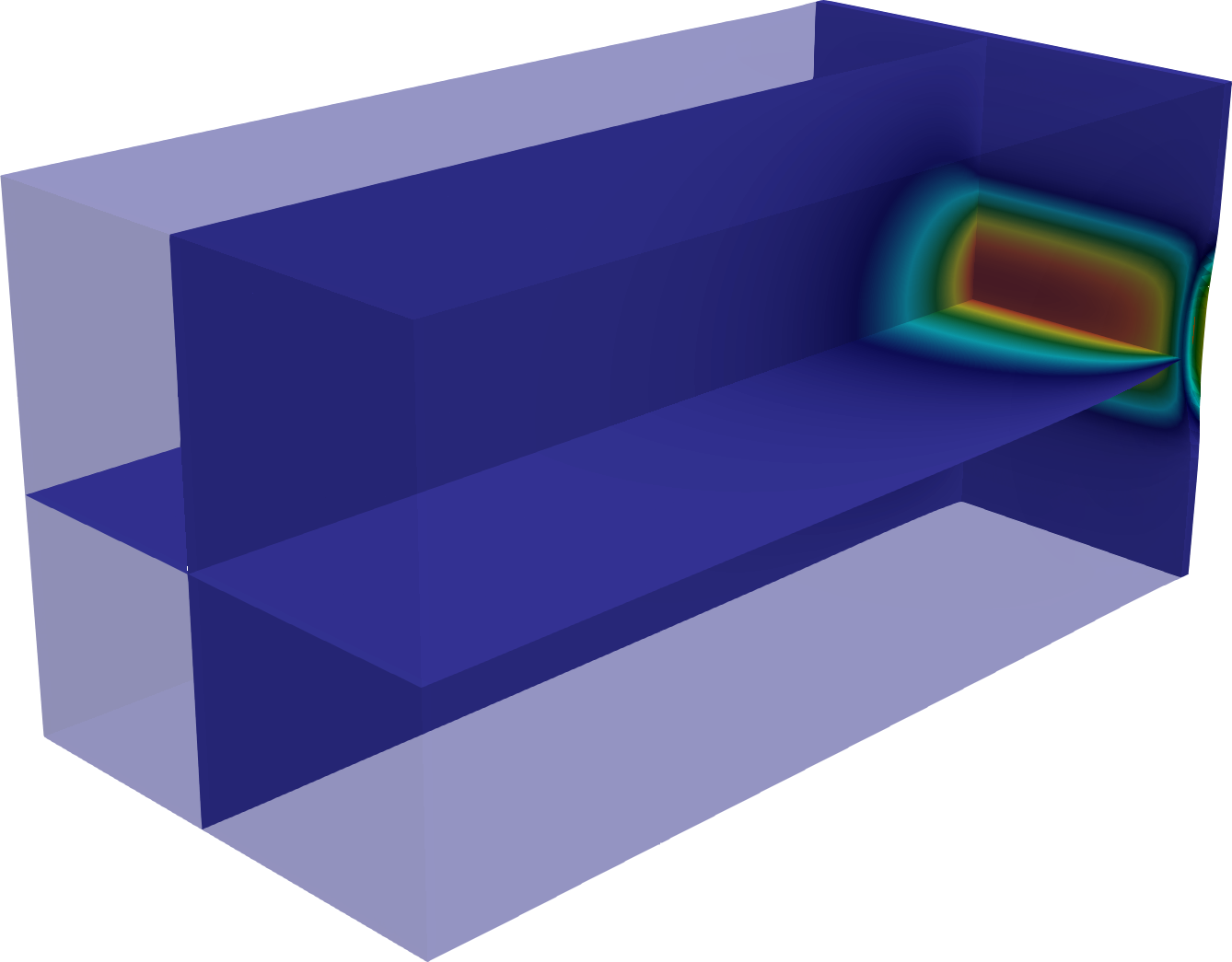}
		\label{fig:p_FM_H}
	\end{subfigure}
	
	\begin{subfigure}[b]{.45\textwidth}
		\centering
		\includegraphics[width=0.9\textwidth]{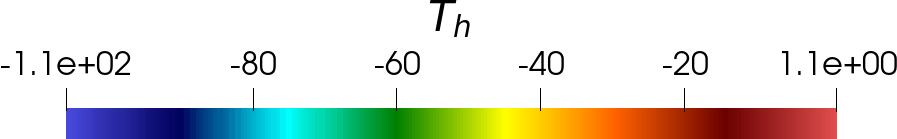}
		\label{fig:T_FM_H_cb}
	\end{subfigure}
	\begin{subfigure}[b]{.45\textwidth}
		\centering
		\includegraphics[width=0.9\textwidth]{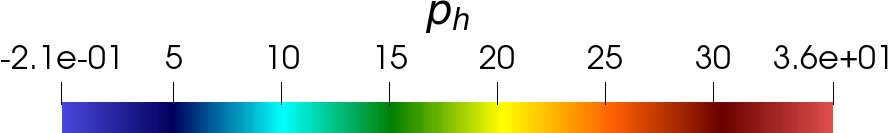}
		\label{fig:p_FM_H_cb}
	\end{subfigure}
	\caption{Test case of Section~\ref{sec:geo3D}: computed displacement field in $y$- (top-left) and $z-$ (top-right) directions, computed temperature field (bottom-left), and pressure-field (bottom-right). The domain is clipped at $x=0.05$ and the two slices are at $y = 0.9$ and $z = 1.1$. The deformation is magnified by a factor $150$. The solution strategy is the \textbf{FM-H} scheme and the solution is computed over $768$ thousands of elements.}
	\label{fig:geo_FM_H_3D}
\end{figure}

\begin{figure}[ht!]
	\centering
	
	\begin{subfigure}[b]{.45\textwidth}
		\centering
		\includegraphics[width=0.9\textwidth]{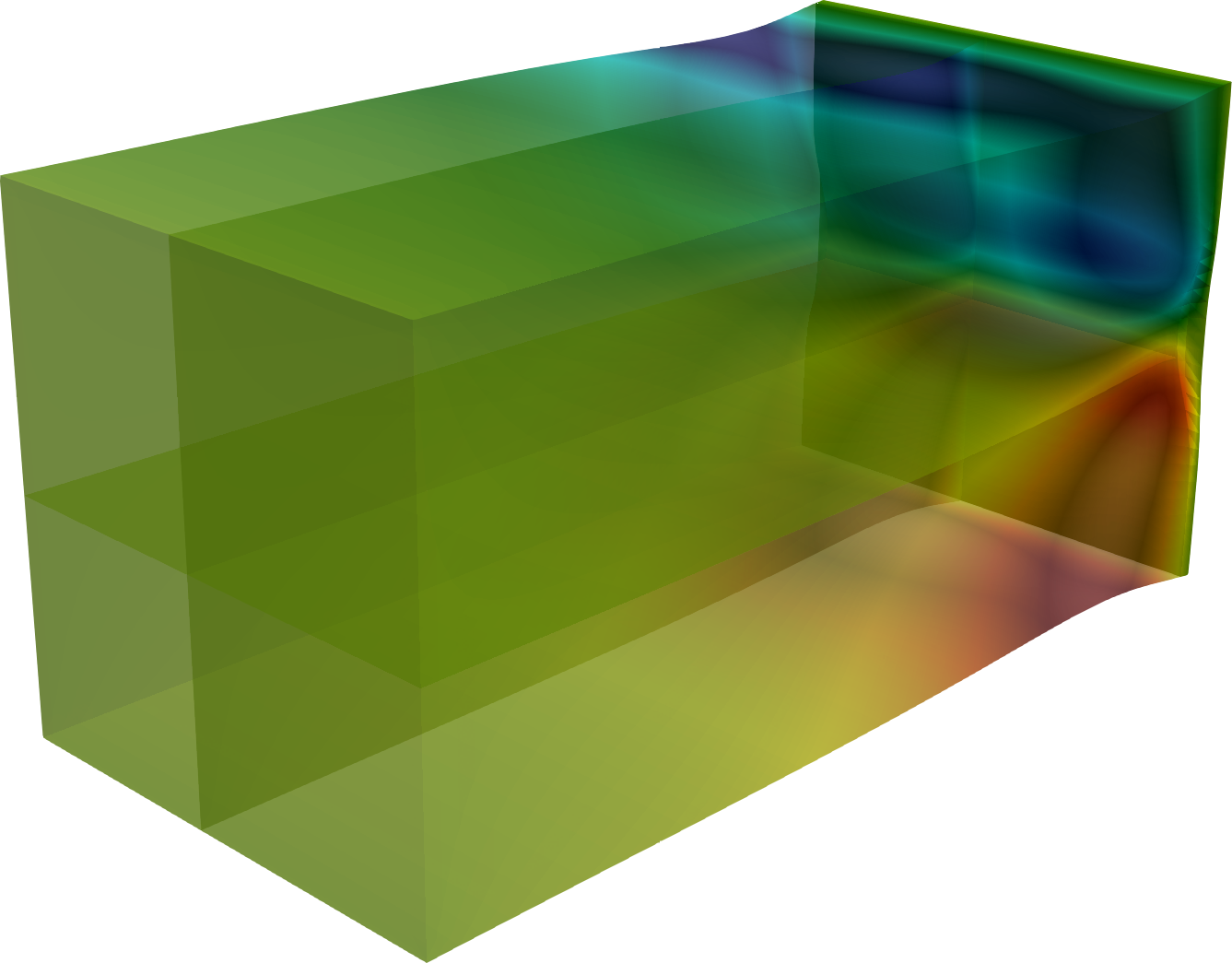}
		\label{fig:uy_fp}
	\end{subfigure}
	\begin{subfigure}[b]{.45\textwidth}
		\centering
		\includegraphics[width=0.9\textwidth]{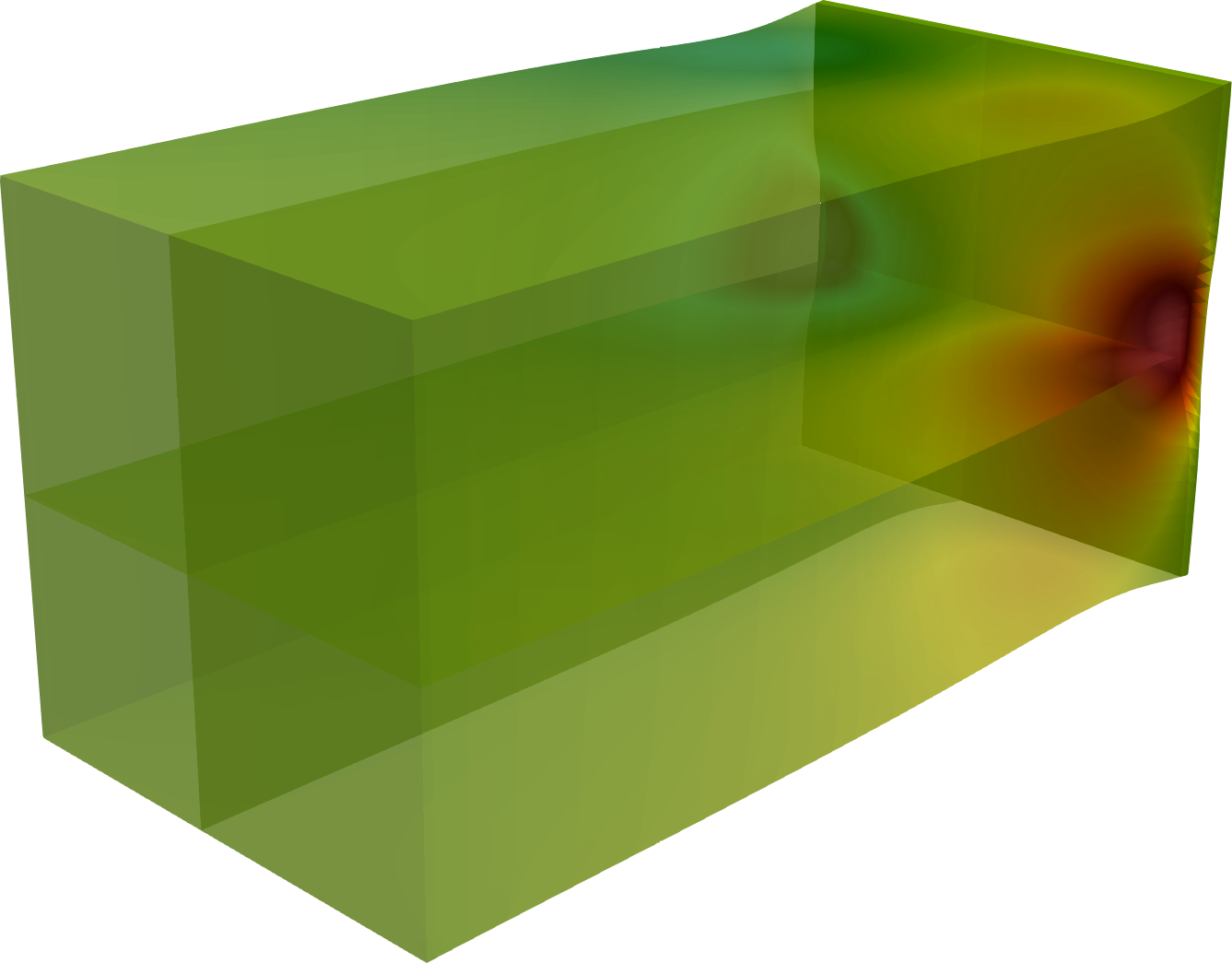}
		\label{fig:uz_fp}
	\end{subfigure}
	
	\begin{subfigure}[b]{.45\textwidth}
		\centering
		\includegraphics[width=0.9\textwidth]{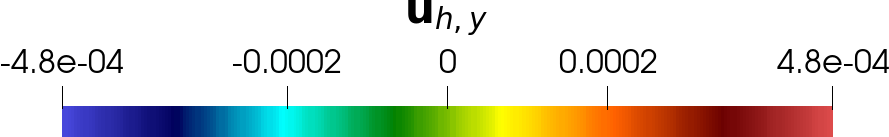}
		\label{fig:uy_fp_cb}
	\end{subfigure}
	\begin{subfigure}[b]{.45\textwidth}
		\centering
		\includegraphics[width=0.9\textwidth]{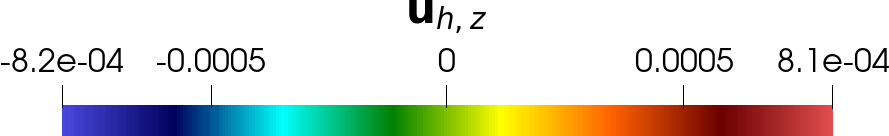}
		\label{fig:uz_fp_cb}
	\end{subfigure}
	
	\vspace{0.5cm}
	
	\begin{subfigure}[b]{.45\textwidth}
		\centering
		\includegraphics[width=0.9\textwidth]{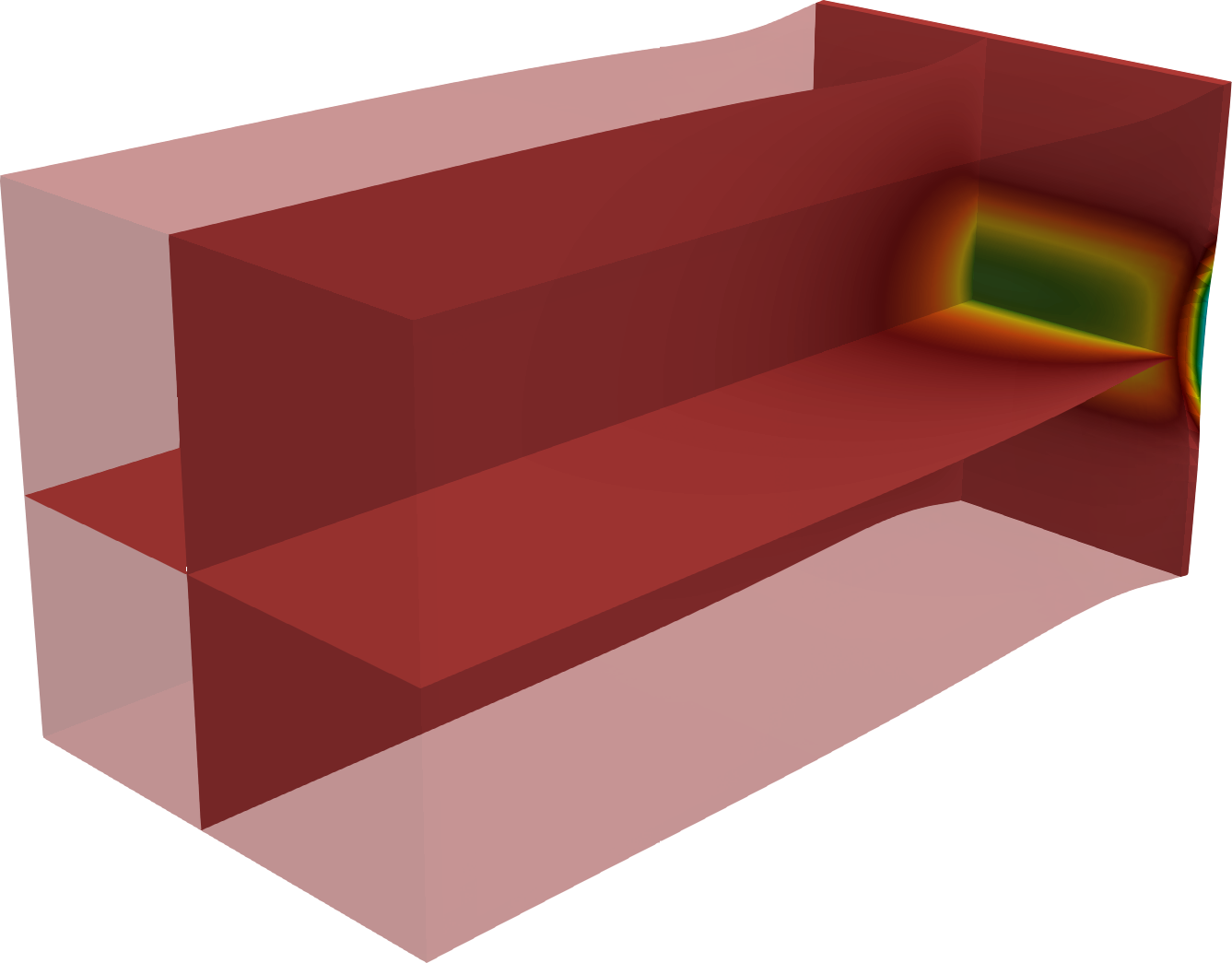}
		\label{fig:T_fp}
	\end{subfigure}
	\begin{subfigure}[b]{.45\textwidth}
		\centering
		\includegraphics[width=0.9\textwidth]{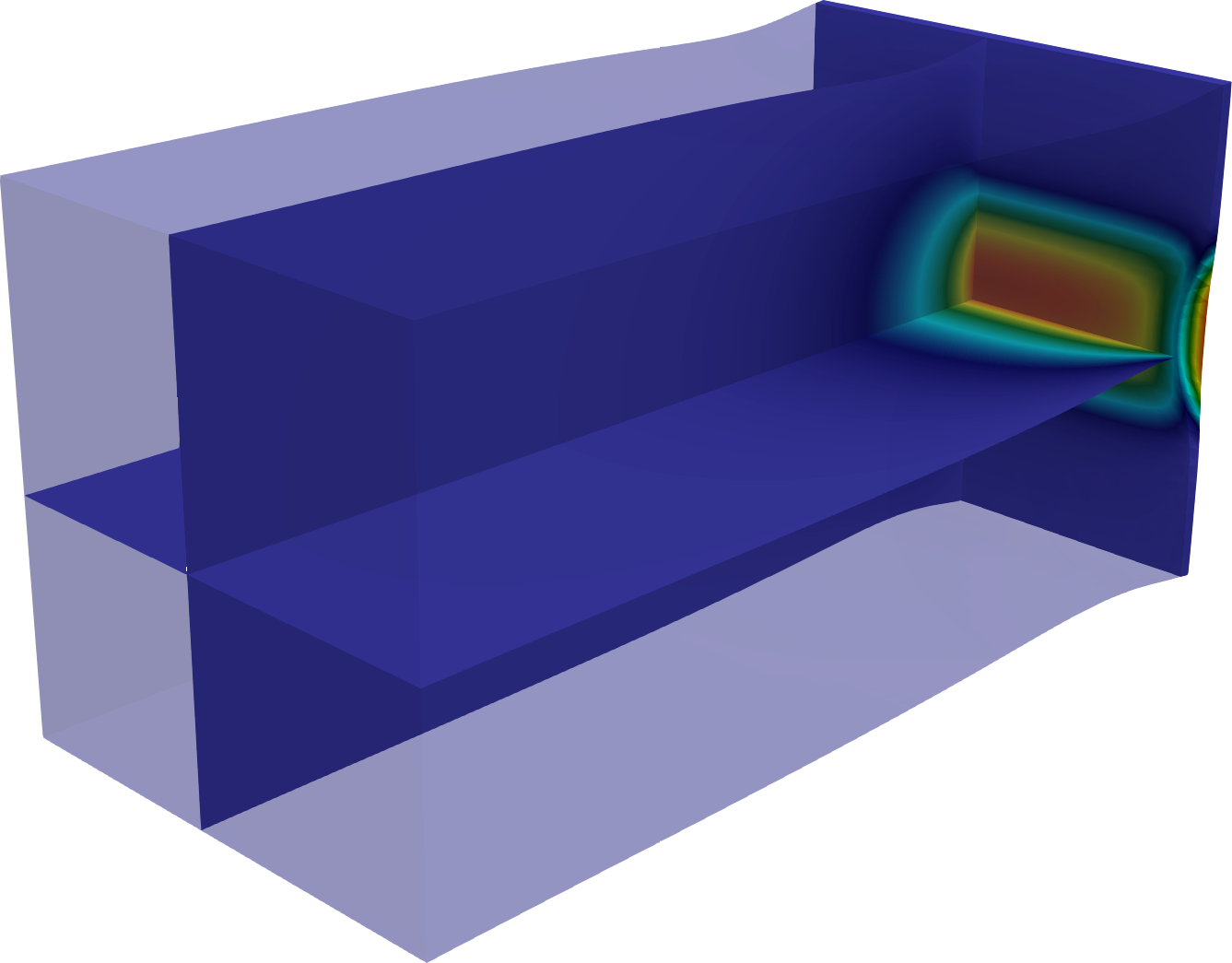}
		\label{fig:p_fp}
	\end{subfigure}
	
	\begin{subfigure}[b]{.45\textwidth}
		\centering
		\includegraphics[width=0.9\textwidth]{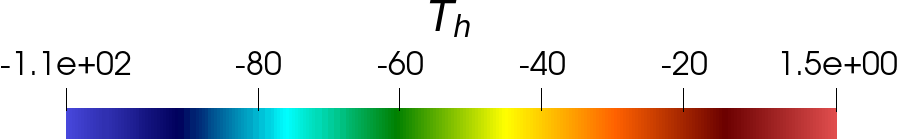}
		\label{fig:T_fp_cb}
	\end{subfigure}
	\begin{subfigure}[b]{.45\textwidth}
		\centering
		\includegraphics[width=0.9\textwidth]{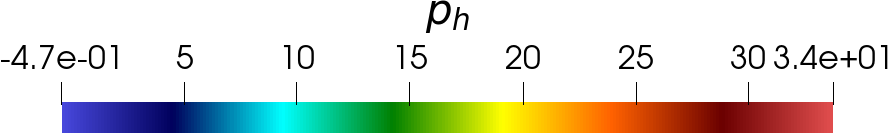}
		\label{fig:p_fp_cb}
	\end{subfigure}
	\caption{Test case of Section~\ref{sec:geo3D}: computed displacement field in $y$- (top-left) and $z-$ (top-right) directions, computed temperature field (bottom-left), and pressure-field (bottom-right). The domain is clipped at $x=0.05$ and the two slices are at $y = 0.9$ and $z = 1.1$. The deformation is magnified by a factor $150$. The solution strategy is the \textbf{FHM} scheme and the solution is computed over $96$ thousands of elements.}
	\label{fig:geo_fp_3D}
\end{figure}

In Figure~\ref{fig:geo_F_H_M_3D} we report the results for the geothermal test case in a three-dimensional setting solved with the \textbf{F-H-M} solution strategy on $1.5$ millions of elements (total number of degrees of freedom $\num[exponent-product=\ensuremath{\cdot}, print-unity-mantissa=false]{3.6e+7}$). First, we observe that the injected fluid is rapidly brought to the reference temperature by the system (cf. Figure~\ref{fig:geo_F_H_M_3D}~(bottom-left)). Second, by looking at the pressure field (cf. Figure~\ref{fig:geo_F_H_M_3D}~(bottom-right)) we observe a zone of high-pressure in the inflow region. Last, by looking at the displacement field, we observe that the injection of cold fluid induces a negative displacement in the $x$-direction in the inflow region and it induces a tighting of the porous media in the $y$- and $z$-direction. This particular phenomena is triggered by the temperature gradient of the fluid and it highlights the importance of taking into account the thermal coupling with the more classical poroelastic model. These results are in agreement with the ones presented in \cite{AntoniettiBonetti2022}. We remark that, for the sake of representation, in Figure~\ref{fig:geo_F_H_M_3D} the deformations of the domain are scaled by an appropriate factor (indicated in the figures caption). No particular differences are observed between the results of the \textbf{FHM} and the \textbf{F-H-M} schemes, consistent with what was observed in the previous Sections. For what concerns the \textbf{FM-H} scheme, we can notice that the simulation captures the most important behaviours of the solution (e.g., the shrinking of the domain in the inflow region), but the results are slightly different with respect to the other two solution strategies. 

\begin{table}[H]
	\centering 
	\footnotesize
	\begin{tabular}{c c c c c c}
		& $N_{\text{el}}$ & $96000$ & $324000$  & $768000$ & $1500000$ \B\\
		\hline
		\textbf{FHM}
		& \#it & $3$ & $3$ & $-$ &  $-$\T\B\\
		\hline
		\multirow{2}{*}{\textbf{FM-H}} 
		& \#it & $5$ & $5$ & $5$ & $-$ \T\\
		& Speed-up & $1.76$ & $1.89$ & $-$ & $-$ \B\\
		\hline
		\multirow{2}{*}{\textbf{F-H-M}} 
		& \#it & $18$ & $18$ & $18$ & $18$ \T\\
		& Speed-up & $0.24$ & $0.28$ & $-$ & $-$ \B\\
		\hline
	\end{tabular}
	\caption{Geothermal test case in three dimensions: iteration counts of the three solution algorithms for the convergence and speed-ups of the splitting solution algorithms with respect to the \textbf{FHM} scheme versus the number of elements. The computational times [\si{\second}] are reported in \ref{sec:appendix}.}
	\label{tab:geo_itertimes}
\end{table}

In Table~\ref{tab:geo_itertimes} we report the iteration counts and the computed speed-ups for the solution of the test case considered different number of elements and different solution strategies. The computational times [\si{\second}] can be found in \ref{sec:appendix}. First, we observe that the iteration counts needed by the schemes for solving the problem is independent by the number of elements of the mesh (moreover, we remark that the tolerances of the fixed-point and of the splitting strategies are independent of the mesh size). Second, we observe that the \textbf{FM-H} solution strategy is the one that performs the best in terms of computational times, while the \textbf{F-H-M} is the one that performs the worst. These results are in agreement with the theoretical results presented in \cite{AntoniettiBonetti2022}, where it is clear that the complete fully-coupling of the four-field PolyDG formulation for this problem can enhance the robustness properties of the method. However, it is important to highlight that, despite being the scheme with the worst performance in terms of computational times, the \textbf{F-H-M} is the only scheme that can be employed for the solution  of the last refinement considered, while the \textbf{FHM} and \textbf{FM-H} shows memory issues after 324000 and 768000 elements, respectively.

\section{Theoretical convergence analysis}
\label{sec:ConvProof}
We recall that the well-posedness of problem \eqref{eq:TPE_system}, \eqref{eq:TPE_discrete_4fields}, and the convergence of the \textbf{FHM} fixed-point strategy have already been addressed in \cite{Brun2019, AntoniettiBonetti2022, Bonetti2024}. The aim of this section is to prove the stability and the convergence of the proposed splitting schemes. More specifically, we focus on the \textbf{F-H-M} scheme, since the convergence of the \textbf{FM-H} scheme follows by applying similar arguments. We remark that the proof of convergence of these schemes can be easily extended to the quasi-static problem, where the splitting iterations are employed at every time-step. Similarly to the results obtained in \cite{Bonetti2024}, for the quasi-static problem the conditions on the the problem parameters for ensuring the convergence of the schemes, can be translated into a condition on the smallness of the time-step.

\subsection{Preliminaries}
For starting the analysis on the convergence of the \textbf{F-H-M} splitting scheme, we introduce some auxiliary lemmata; the proof, when not explicitly stated, can be found in \cite{antonietti2020_stokesDG, AntoniettiBonetti2022, Bonetti2024}.
\begin{lemma}
	\label{lem:boundcoerc_bil_forms}
	Let Assumptions~\ref{assumption:TPE_model_problem} and ~\ref{ass:TPE_mesh_Th1} be satisfied and assume that the parameters $\alpha_1$, $\alpha_2$, and $\alpha_3$ appearing in \eqref{eq:TPE_stabilization_func} are chosen large enough. Then, the following bounds hold:
	\begin{equation}
		\begin{aligned}
			\mathcal{A}_h^T(T,S) \lesssim \ & \|T\|_{dG,T} \|S\|_{dG,T}, \qquad && \mathcal{A}_h^T(T,T) \gtrsim \|T\|_{dG,T}^2 \qquad &&\forall \ T,S \in V_h^{\ell},\\
			\mathcal{A}_h^p(p,q) \lesssim \ & \|p\|_{dG,p} \|q\|_{dG,p}, \qquad && \mathcal{A}_h^p(p,p) \gtrsim \|p\|_{dG,p}^2 \qquad &&\forall \ p,q \in V_h^{\ell},\\
			\mathcal{A}_h^e(\mathbf{u},\mathbf{v}) \lesssim \ & \|\mathbf{u}\|_{dG,e} \|\mathbf{v}\|_{dG,e}, \qquad && \mathcal{A}_h^e(\mathbf{u},\mathbf{u}) \gtrsim \|\mathbf{u}\|_{dG,e}^2 \qquad &&\forall \ \mathbf{u},\mathbf{v} \in \mathbf{V}_h^{\ell},
		\end{aligned}
	\end{equation}
	where the hidden constants do not depend on the material properties and the discretization parameters.
\end{lemma}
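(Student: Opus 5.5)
The statement to prove is Lemma~\ref{lem:boundcoerc_bil_forms}. It covers three symmetric weighted interior penalty forms with the same structure, so I will carry out the argument once for a generic form. Take
\[
a_h(w,z) = (\mathbf{D}\nabla_h w,\nabla_h z) - \sum_{F\in\mathcal{F}}\int_F \bigl(\wavg{\mathbf{D}\nabla_h w}\cdot\jump{z} + \jump{w}\cdot\wavg{\mathbf{D}\nabla_h z} - \gamma\,\jump{w}\cdot\jump{z}\bigr),
\]
with $\mathbf{D}\in\{\boldsymbol{\Theta},\mathbf{K}\}$ elementwise constant and weights and penalty as in \eqref{eq:TPE_stabilization_func}. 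For the elastic form, $\mathbf{D}\nabla_h$ is replaced by $2\mu\boldsymbol{\epsilon}_h$ and the scalar $\mu$ plays the role of $\delta_n$. The volume and penalty terms are bounded directly by Cauchy--Schwarz in terms of the corresponding dG-norm in \eqref{eq:RobQSTPE_DG_norms}. All the work is therefore in the consistency term $\int_F \wavg{\mathbf{D}\nabla_h w}\cdot\jump{z}$.

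The key step is a weighted trace estimate that is independent of $\mathbf{D}$. On an interior face, $\jump{z}$ is normal to $F$, so only $\omega^{\pm}\,\mathbf{n}^{T}\mathbf{D}^{\pm}\nabla w^{\pm}$ contributes. Write $\mathbf{n}^{T}\mathbf{D}^{\pm} = (\mathbf{D}^{\pm 1/2}\mathbf{n})^{T}\mathbf{D}^{\pm1/2}$ and note $|\mathbf{D}^{\pm1/2}\mathbf{n}|^2 = \delta_n^{\pm}$. Combined with the choice $\omega^{\pm} = \delta_n^{\mp}/(\delta_n^{+}+\delta_n^{-})$, this gives pointwise
\[
\omega^{\pm}|\mathbf{n}^{T}\mathbf{D}^{\pm}\nabla w^{\pm}| \le \frac{\delta_n^{\mp}\sqrt{\delta_n^{\pm}}}{\delta_n^{+}+\delta_n^{-}}|\mathbf{D}^{\pm1/2}\nabla w^{\pm}| \le \gamma_{\mathbf{D}}^{1/2}|\mathbf{D}^{\pm1/2}\nabla w^{\pm}|,
\]
since $\delta^{\mp}\sqrt{\delta^{\pm}}/(\delta^++\delta^-) \le \sqrt{\delta^+\delta^-/(\delta^++\delta^-)}$. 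The trace-inverse inequality \eqref{eq:TPE_trace_inverse_ineq} applies to the polynomial $\mathbf{D}^{1/2}\nabla w|_\kappa$, and assumption \ref{ass:TPE_A3} allows comparing $h_{\kappa^\pm}$. Together these give
\[
\sum_F \|\gamma^{-1/2}\wavg{\mathbf{D}\nabla_h w}\cdot\mathbf{n}\|_F^2 \le \frac{C_{\mathrm{tr}}^2 N_\partial}{\alpha_i}\,\|\mathbf{D}^{1/2}\nabla_h w\|^2 .
\]
Here $N_\partial$ bounds the face count, or is absorbed through the polytopic-regular simplices of Definition~\ref{def:unif_regular}, following the standard PolyDG argument. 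On boundary faces the same bound holds with $\overline{D}_\kappa \ge \delta_n$, so the weight $\gamma$ is replaced by $\overline{D}_\kappa$. For elasticity the tensor $2\mu\boldsymbol{\epsilon}$ is scalar-weighted, and the same computation works with $\gamma_\mu$. The constant depends only on $C_{\mathrm{tr}}$ and the mesh regularity, not on $\mathbf{D}$ or $\mu$.

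Boundedness then follows by Cauchy--Schwarz, splitting $\wavg{\cdot}\cdot\jump{z} = (\gamma^{-1/2}\wavg{\cdot})(\gamma^{1/2}\jump{z})$ and applying the estimate above. For coercivity, put $z=w$ and use Young's inequality with parameter $\varepsilon$:
\[
a_h(w,w) \ge (1-\varepsilon)\|\mathbf{D}^{1/2}\nabla_h w\|^2 + \Bigl(1-\frac{C_{\mathrm{tr}}^2 N_\partial}{\varepsilon\,\alpha_i}\Bigr)\sum_F\|\gamma^{1/2}\jump{w}\|_F^2 .
\]
With $\varepsilon = 1/2$ and $\alpha_i \ge 4C_{\mathrm{tr}}^2N_\partial$, this is at least $\tfrac12\|w\|_{dG}^2$.

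In the elastic case the dG-norm contains $\boldsymbol{\epsilon}_h$ rather than $\nabla_h$, and this is already built into the definition of $\|\cdot\|_{dG,e}$, so no discrete Korn inequality is needed. The main obstacle is making the trace constant uniform with respect to the coefficient contrast, which is exactly the weighted-average inequality above. The harmonic-mean penalty $\gamma_{\mathbf{D}}$ is what makes it work; arithmetic averages would fail. A secondary technical point is controlling $N_\partial$ on polytopes without assuming a bounded number of faces. For that I would rely on the simplex-based trace inequality of \cite{Cangiani2017} rather than a face-by-face sum.
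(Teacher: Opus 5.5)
Your argument is correct and is the standard weighted-SIP argument (the harmonic-mean bound $\omega^{\pm}\sqrt{\delta_n^{\pm}}\le\gamma^{1/2}$, the polytopic trace-inverse inequality, then Cauchy--Schwarz and Young). The paper does not prove this lemma itself; it relies on exactly this argument by citing \cite{antonietti2020_stokesDG, AntoniettiBonetti2022, Bonetti2024}. The face-count factor $N_\partial$ is not actually needed: \eqref{eq:TPE_trace_inverse_ineq} already controls all of $\partial\kappa$ with $C_{\mathrm{tr}}$ independent of the number of faces, and the penalty uses the maximum of $\ell^2/h_\kappa$ over the two neighbours, so summing over faces only costs a factor $2$.
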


\begin{lemma}
	\label{lem:gen_inf_sup}
	Let assume that Assumption~\ref{ass:TPE_mesh_Th1} holds and that the polynomial degrees $\ell$ and $m$  satisfy $\ell+1 \geq m$. Moreover assume that the parameter $\alpha_4$ in \eqref{eq:TPE_stabilization_func} is large enough. Then, following inequality holds true:
	\begin{equation}
		\label{eq:gen_inf_sup}
		\underset{\mathbf{0} \neq \mathbf{v}_h \in \mathbf{V}^{\ell}_h}{\mbox{sup}} \frac{\mathcal{B}_h(\mathbf{v}_h, \varphi_h)}{\|\mathbf{v}_h\|_{DG,e}} + \mathcal{D}_h(\varphi_h,\varphi_h)^{\frac12} \geq \mathbb{B} \|\varphi_h\| \qquad \forall \varphi_h \in Q_h^m;
	\end{equation}
	where $\mathbb{B}>0$ is possibly dependent on $\ell$ and $m$ and the hidden constant is independent of the mesh size $h$.
\end{lemma}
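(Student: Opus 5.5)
The plan is to use the standard Fortin-type argument for discontinuous Galerkin discretizations of Stokes-like problems with pressure-jump stabilization, following \cite{antonietti2020_stokesDG}. Fix $\varphi_h \in Q_h^m$. By the continuous inf-sup property of the divergence on $\mathbf{H}^1_0(\Omega)$, there is $\mathbf{v} \in \mathbf{H}^1_0(\Omega)$ with $\nabla\cdot\mathbf{v} = \varphi_h$ and $\|\mathbf{v}\|_{H^1} \lesssim \|\varphi_h\|$.

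This step requires $\varphi_h$ to have zero mean. If $Q_h^m$ is not restricted to mean-zero functions, I would first split off the mean value. I would then control that constant mode either through the boundary-face contribution of the stabilization, since $\varrho$ is also defined on $\mathcal{F}_B$, or, at the level of the full problem, through the term $\mathcal{M}_\varphi$. I expect this bookkeeping to be a technical point to settle first, not the core difficulty.

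Next I take $\mathbf{v}_h = \Pi_\ell \mathbf{v}$, the elementwise $L^2$-orthogonal projection onto $\mathbf{V}_h^\ell$. Integrating by parts element by element in the definition of $\mathcal{B}_h$ gives, for any piecewise $H^1$ function $\mathbf{w}$,
$$
\mathcal{B}_h(\varphi_h,\mathbf{w}) = (\nabla_h\varphi_h,\mathbf{w}) - \sum_{F\in\mathcal{F}_I}\int_F \jump{\varphi_h}\cdot\avg{\mathbf{w}} .
$$
Applied to $\mathbf{w}=\mathbf{v}$, this yields $\mathcal{B}_h(\varphi_h,\mathbf{v}) = -\|\varphi_h\|^2$, because $\mathbf{v}$ has no jumps. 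Taking the difference with $\mathbf{w}=\Pi_\ell\mathbf{v}$, the volume term $(\nabla_h\varphi_h,\Pi_\ell\mathbf{v}-\mathbf{v})$ vanishes, since $\nabla_h\varphi_h$ is piecewise in $\mathbb{P}^{m-1}\subseteq\mathbb{P}^{\ell}$ under the assumption $\ell+1\ge m$. The only remainder is the face term, which Cauchy--Schwarz with the weight $\varrho$ bounds by
$$
\Big(\sum_{F\in\mathcal{F}_I}\|\varrho^{1/2}\jump{\varphi_h}\|_F^2\Big)^{1/2}\Big(\sum_{F}\|\varrho^{-1/2}\avg{\Pi_\ell\mathbf{v}-\mathbf{v}}\|_F^2\Big)^{1/2}.
$$
Using $\varrho\simeq h/m$ and the trace approximation estimate $\|\mathbf{v}-\Pi_\ell\mathbf{v}\|_{L^2(\partial\kappa)}\lesssim h_\kappa^{1/2}|\mathbf{v}|_{H^1(\kappa)}$ on polytopic-regular meshes (Assumption~\ref{ass:TPE_mesh_Th1}), the second factor is $\lesssim C(\ell,m)\|\mathbf{v}\|_{H^1}$. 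This gives
$$
\|\varphi_h\|^2 \le |\mathcal{B}_h(\varphi_h,\Pi_\ell\mathbf{v})| + C\,\mathcal{D}_h(\varphi_h,\varphi_h)^{1/2}\|\varphi_h\|.
$$

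The remaining ingredient is the stability $\|\Pi_\ell\mathbf{v}\|_{dG,e}\lesssim \|\mathbf{v}\|_{H^1}$. The volume part follows from $H^1$-stability of the local projection. The jump part follows from $\jump{\Pi_\ell\mathbf{v}}=\jump{\Pi_\ell\mathbf{v}-\mathbf{v}}$, the same trace approximation estimate, and $\zeta\lesssim \mu_M\ell^2/h$. This bound depends on $\ell$ and $\mu_M$; the $\ell$-dependence is absorbed into $\mathbb{B}$, as the statement allows. Dividing by $\|\varphi_h\|\gtrsim\|\Pi_\ell\mathbf{v}\|_{dG,e}$ then gives
$$
\frac{\mathcal{B}_h(\varphi_h,\Pi_\ell\mathbf{v})}{\|\Pi_\ell\mathbf{v}\|_{dG,e}}+\mathcal{D}_h(\varphi_h,\varphi_h)^{1/2}\gtrsim\|\varphi_h\|,
$$
which yields the claim up to sign (replace $\mathbf{v}$ by $-\mathbf{v}$).

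I expect the main obstacle to be the polytopic approximation properties of $\Pi_\ell$, namely the $H^1$-stability and the trace bound on general polytopes with many or small faces. For these I would rely on the covering-by-simplices argument from \cite{Cangiani2014, CangianiDong:17} together with the trace-inverse inequality \eqref{eq:TPE_trace_inverse_ineq}, accepting $\ell$- and $m$-dependent constants.
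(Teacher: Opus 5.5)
Your core argument is the standard Fortin-type proof from the Stokes analysis \cite{antonietti2020_stokesDG} to which the paper defers, and it is correct for mean-free $\varphi_h$. That argument consists of lifting $\varphi_h$ to $\mathbf{v}\in\mathbf{H}^1_0(\Omega)$ with $\nabla\cdot\mathbf{v}=\varphi_h$, taking $\mathbf{v}_h=\Pi_\ell\mathbf{v}$, integrating by parts elementwise, using orthogonality of $\mathbf{v}-\Pi_\ell\mathbf{v}$ to $\nabla_h\varphi_h\in[\mathbb{P}^{m-1}]^d\subseteq[\mathbb{P}^{\ell}]^d$ (exactly where $\ell+1\ge m$ enters), and absorbing the interior-face remainder into $\mathcal{D}_h$.

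The genuine gap is the constant mode, which you treat as bookkeeping. Neither of your proposed fixes can work, because the inequality is false for constants. Your own identity shows this. For $\varphi_h\equiv 1$ we have $\nabla_h\varphi_h=\mathbf{0}$ and $\jump{\varphi_h}=\mathbf{0}$ on every $F\in\mathcal{F}_I$, so $\mathcal{B}_h(1,\mathbf{w}_h)=0$ for all $\mathbf{w}_h\in\mathbf{V}_h^{\ell}$. Moreover $\mathcal{D}_h(1,1)=0$, because $\mathcal{D}_h$ sums over $\mathcal{F}_I$ only; the values of $\varrho$ on $\mathcal{F}_B$ never enter it. So the left-hand side vanishes while $\mathbb{B}\|1\|>0$.

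Adding boundary faces to $\mathcal{D}_h$ would not rescue an $h$-uniform bound either. On quasi-uniform meshes a constant $c$ gives $\mathcal{D}_h(c,c)\simeq \alpha_4 m^{-1}h\,|\partial\Omega|\,c^2$, i.e.\ $\mathcal{D}_h(c,c)^{1/2}\simeq h^{1/2}\|c\|$. The term $\mathcal{M}_\varphi$ does not appear in the inequality at all, and it degenerates as $\lambda\to\infty$ anyway.

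What your argument does prove is the lemma on $Q_h^m\cap L^2_0(\Omega)$. Equivalently, since neither $\mathcal{B}_h$ nor $\mathcal{D}_h$ sees constants, it proves the bound with $\|\varphi_h-\overline{\varphi_h}\|$ on the right-hand side for every $\varphi_h\in Q_h^m$, where $\overline{\varphi_h}$ is the mean value. That is the correct form of the statement, and it is the form used in the Stokes setting, where the pressure space is mean-free.

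The mean of the total pressure has to be recovered outside the lemma, from the coupled problem. Testing the constraint equation with $\psi_h=1$ and all other test functions zero gives $\int_\Omega\lambda^{-1}(\varphi_h+\alpha p_h+\beta T_h)=0$. For constant $\lambda$, this controls $\overline{\varphi_h}$ by $p_h$ and $T_h$ uniformly in $\lambda$.

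A minor point in your last step: $\mathcal{B}_h(\varphi_h,-\Pi_\ell\mathbf{v})$ can be negative when $\|\varphi_h\|\lesssim\mathcal{D}_h(\varphi_h,\varphi_h)^{1/2}$. Dividing by an upper bound for $\|\Pi_\ell\mathbf{v}\|_{dG,e}$ therefore needs the usual two-case split. That case is trivial, since the supremum is nonnegative.
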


\begin{lemma}
	\label{lem:bound_Ch}
	Given Assumption~\ref{ass:TPE_mesh_Th1}, for all $\mathbf{v}_h \in \mathbf{V}_h^{\ell}$ we define the broken $L^{\infty}$ seminorm as
	$$
	|\mathbf{v}_h|_{dG, \infty} = \| \nabla_h \mathbf{v}_h\|_{L^{\infty}(\Omega)} + \max_{F \in \mathcal{F}} \, \max_{\kappa \in \{\kappa^+, \kappa^-\}} \, \frac{\ell^2}{h_{\kappa}} \| \jump{\mathbf{v}_h}\|_{L^{\infty}(F)}
	$$
	then
	$$
	\mathcal{C}(T_h, p_h, T_h) \gtrsim -\lvert c_f \mathbf{K} \nabla_h p_h \rvert_{dG, \infty} \|T_h\|^2 \quad \forall T_h \in V_h^{\ell}
	$$
\end{lemma}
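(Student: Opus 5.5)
The plan is to show that $\mathcal{C}_h(T_h,p_h,T_h)$, tested on the diagonal, reduces to a volume term with the divergence of the transport velocity plus nonnegative upwind contributions. Then every remaining piece can be bounded by $|\boldsymbol{\eta}|_{dG,\infty}\|T_h\|^2$, with $\boldsymbol{\eta}=-c_f\mathbf{K}\nabla_h p_h$.

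Set $S=T=T_h$ in the definition of $\mathcal{C}_h$. The first term is $(\boldsymbol{\eta}\cdot\nabla_h T, T)=\frac12(\boldsymbol{\eta},\nabla_h (T^2))$. Integrating by parts element-wise gives
\begin{equation*}
\tfrac12(\boldsymbol{\eta}\cdot\nabla_h T,T)\cdot 2=-\tfrac12(\nabla_h\cdot\boldsymbol{\eta},T^2)+\tfrac12\sum_{\kappa}\int_{\partial\kappa}\boldsymbol{\eta}\cdot\mathbf{n}_\kappa T^2 .
\end{equation*}
Next, rewrite the boundary sum over faces. On boundary faces the contribution $\frac12\int_F\boldsymbol{\eta}\cdot\mathbf{n}\,T^2$ is cancelled exactly by the last term of $\mathcal{C}_h$. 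On interior faces, the identity $\jump{\boldsymbol{\eta}T^2}_n=\avg{\boldsymbol{\eta}}\cdot\jump{T^2}+\jump{\boldsymbol{\eta}}_n\avg{T^2}$ together with $\jump{T^2}=2\avg{T}\jump{T}$ produces the term $\int_F(\avg{\boldsymbol{\eta}}\cdot\jump{T})\avg{T}$. This cancels the consistency term $-\sum_{F\in\mathcal{F}_I}\int_F(\avg{\boldsymbol{\eta}}\cdot\jump{T})\avg{T}$ of $\mathcal{C}_h$. (I would check the sign and factor conventions here carefully, since this cancellation is the reason that term is in the form.)

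What is left is
\begin{equation*}
\mathcal{C}_h(T_h,p_h,T_h)=-\tfrac12(\nabla_h\cdot\boldsymbol{\eta},T_h^2)+\tfrac12\sum_{F\in\mathcal{F}_I}\int_F\jump{\boldsymbol{\eta}}_n\avg{T_h^2}+\tfrac12\sum_{F\in\mathcal{F}}\int_F|\avg{\boldsymbol{\eta}}\cdot\mathbf{n}|\,|\jump{T_h}|^2 .
\end{equation*}
The last sum is nonnegative and is simply dropped. The volume term is bounded by $\frac{\sqrt d}{2}\|\nabla_h\boldsymbol{\eta}\|_{L^\infty}\|T_h\|^2$.

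The main obstacle is the interior-face term with $\jump{\boldsymbol{\eta}}_n$, because $\boldsymbol{\eta}$ is discontinuous. I will bound it by $\|\jump{\boldsymbol{\eta}}\|_{L^\infty(F)}\int_F\avg{T_h^2}$. The factor $\int_F\avg{T_h^2}$ is controlled by $\sum_{\kappa\in\{\kappa^\pm\}}\|T_h\|_{L^2(\partial\kappa\cap F)}^2$, and the trace-inverse inequality \eqref{eq:TPE_trace_inverse_ineq} bounds this by $C_{\mathrm{tr}}^2\ell^2h_\kappa^{-1}\|T_h\|_{\kappa}^2$. The resulting weight $\frac{\ell^2}{h_\kappa}\|\jump{\boldsymbol{\eta}}\|_{L^\infty(F)}$ is exactly the jump part of $|\boldsymbol{\eta}|_{dG,\infty}$. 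To sum over faces, I use that each element has a uniformly bounded number of faces, or that the trace inequality already accounts for all of $\partial\kappa$. I also use \ref{ass:TPE_A3}, to switch $h_{\kappa^+}$ and $h_{\kappa^-}$.

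Collecting the three bounds gives $\mathcal{C}_h(T_h,p_h,T_h)\ge -C(1+C_{\mathrm{tr}}^2)|c_f\mathbf{K}\nabla_h p_h|_{dG,\infty}\|T_h\|^2$. The constant depends only on $d$, $C_{\mathrm{tr}}$ and the mesh regularity, which is the claimed inequality.
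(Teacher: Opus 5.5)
Your proof is correct. With $\boldsymbol\eta=-c_f\mathbf{K}\nabla_h p_h$, the element-wise integration by parts and the identities $\jump{\boldsymbol\eta T_h^2}_n=\avg{\boldsymbol\eta}\cdot\jump{T_h^2}+\jump{\boldsymbol\eta}_n\avg{T_h^2}$ and $\jump{T_h^2}=2\avg{T_h}\jump{T_h}$ cancel the interior consistency term and the boundary term of $\mathcal{C}_h$ with exactly the signs you wrote. Bounding the remaining $\jump{\boldsymbol\eta}_n\avg{T_h^2}$ term by the trace-inverse inequality is precisely what the $\ell^2/h_\kappa$-weighted jump in $|\cdot|_{dG,\infty}$ is built for. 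The paper gives no proof of its own here and defers to \cite{Bonetti2024}, so your argument is the natural one rather than a competing route.

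Two inessential simplifications. First, you do not need \ref{ass:TPE_A3}: the max over $\kappa\in\{\kappa^+,\kappa^-\}$ in the seminorm already gives $\|\jump{\boldsymbol\eta}\|_{L^\infty(F)}\le \frac{h_\kappa}{\ell^2}|\boldsymbol\eta|_{dG,\infty}$ for both neighbours. Second, drop the bounded-number-of-faces alternative, which is not assumed on polytopal meshes. Since the faces of $\kappa$ partition $\partial\kappa$, the trace-inverse inequality on all of $\partial\kappa$ already closes the sum.
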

\begin{lemma}
	\label{lem:aux_stab_bound}
	Let Assumptions~\ref{assumption:TPE_model_problem} and ~\ref{ass:TPE_mesh_Th1} be satisfied. Moreover, assume that $\alpha_4$ in \eqref{eq:TPE_stabilization_func} is taken large enough, and let $\ell+1 \geq m$. Additionally, assume that at least two of the following requirements are verified:
	$$
	(i)\; b_0 \geq b_m > 0 \quad 
	(ii)\; a_0 - b_0 \geq a_m > 0 \quad 
	(iii)\; c_0 - b_0 \geq c_m > 0 \quad 
	(iv)\; \lambda < \lambda_M < \infty
	$$
	Then, there exist strictly positive constants $a_1$, $b_1$, and $c_1$ such that
	$$
	a_1 \|T_h\|^2 + b_1 \|\varphi_h\|^2 + c_1 \|p_h\|^2 \lesssim \mathcal{M}((p_h, T_h, \varphi_h), (p_h, T_h, \varphi_h)) + \mathcal{D}_h(\varphi_h, \varphi_h) + \| \mathbf{u}_h\|_{dG,e}^2 + \| \mathbf{f}\|^2.
	$$
\end{lemma}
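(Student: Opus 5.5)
The plan is to bound the quadratic form $\mathcal{M}$ from below by splitting it into a lower bound on the $(p,T)$ block and a separate treatment of the total pressure, which is controlled by combining the $\varphi$-coupling terms with the mechanics equation and the generalized inf-sup of Lemma~\ref{lem:gen_inf_sup}.

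\textbf{Step 1: expansion of $\mathcal{M}$.} Testing with $(p_h,T_h,\varphi_h)$ gives
\begin{equation*}
\mathcal{M}((p_h,T_h,\varphi_h),(p_h,T_h,\varphi_h)) = c_0\|p_h\|^2 + a_0\|T_h\|^2 - 2b_0(T_h,p_h) + \frac{1}{\lambda}\|\varphi_h + \alpha p_h + \beta T_h\|^2 .
\end{equation*}
This follows by completing the square in $c_\alpha$, $a_\beta$, $b_{\alpha\beta}$.

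\textbf{Step 2: the $(p,T)$ block.} Since $c_0, a_0 \ge b_0$, I write
\begin{equation*}
c_0\|p\|^2 + a_0\|T\|^2 - 2b_0(T,p) = (c_0-b_0)\|p\|^2 + (a_0-b_0)\|T\|^2 + b_0\|p-T\|^2 .
\end{equation*}
Under (ii) and (iii) this directly gives control of $\|p_h\|^2$ and $\|T_h\|^2$. Under (i) together with (ii) or (iii), one variable is controlled, and $b_0\|p-T\|^2$ plus the triangle inequality recover the other. In this way two of the conditions (i)--(iii) always give $a_1$ and $c_1$. When (iv) is among the two assumed conditions, the square $\lambda^{-1}\|\varphi+\alpha p+\beta T\|^2$ with $\lambda\le\lambda_M$ supplies the missing control, once $\varphi_h$ itself is bounded (Step 3).

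\textbf{Step 3: control of $\varphi_h$.} By Lemma~\ref{lem:gen_inf_sup},
\begin{equation*}
\mathbb{B}\|\varphi_h\| \le \sup_{\mathbf{v}_h}\frac{\mathcal{B}_h(\varphi_h,\mathbf{v}_h)}{\|\mathbf{v}_h\|_{dG,e}} + \mathcal{D}_h(\varphi_h,\varphi_h)^{1/2}.
\end{equation*}
Using the momentum equation of \eqref{eq:TPE_discrete_4fields} (test with $(\mathbf{v}_h,0,0,0)$), $\mathcal{B}_h(\varphi_h,\mathbf{v}_h) = \mathcal{A}_h^e(\mathbf{u}_h,\mathbf{v}_h) - (\mathbf{f},\mathbf{v}_h)$. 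Lemma~\ref{lem:boundcoerc_bil_forms} and a discrete Korn/Poincaré inequality $\|\mathbf{v}_h\|\lesssim\|\mathbf{v}_h\|_{dG,e}$ then bound the supremum by $\|\mathbf{u}_h\|_{dG,e}+\|\mathbf{f}\|$. Squaring gives
\begin{equation*}
b_1\|\varphi_h\|^2 \lesssim \|\mathbf{u}_h\|_{dG,e}^2 + \|\mathbf{f}\|^2 + \mathcal{D}_h(\varphi_h,\varphi_h)
\end{equation*}
with $b_1\sim\mathbb{B}^2$. In the case using (iv), I would combine this with $\|\alpha p+\beta T\|^2 \le 2\|\varphi+\alpha p+\beta T\|^2 + 2\|\varphi\|^2$, weighted by $1/\lambda_M$, to extract whichever of $\|p_h\|$ and $\|T_h\|$ Step 2 did not yet control.

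\textbf{Main obstacle.} The delicate case is (iv) with only one of (i)--(iii). The square controls only the combination $\alpha p+\beta T$, not $p$ and $T$ separately. I expect to close it by noting that the other available condition controls one of the two variables, and then isolating the remaining one through $\alpha>\phi>0$ or $\beta>0$ bounded below. The constants $a_1$ and $c_1$ then depend on $\phi$, $\beta_M$ and $\lambda_M$. All remaining steps are Young-inequality bookkeeping.
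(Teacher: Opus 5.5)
Your proof is correct and takes the natural route. The paper does not reprove this lemma (it defers to \cite{AntoniettiBonetti2022,Bonetti2024}), but your sum-of-squares identity for $\mathcal{M}$, combined with controlling $\|\varphi_h\|$ through Lemma~\ref{lem:gen_inf_sup} and the momentum equation, is exactly the mechanism the paper itself uses in \eqref{eq:stab_step_8}--\eqref{eq:stab_step_9}.

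Two small corrections to your case analysis are needed.
\begin{itemize}
\item For the pair (i)+(iv), condition (i) does not control one of the two variables; it controls only $p_h-T_h$. Close this case with $(\alpha+\beta)p_h=(\alpha p_h+\beta T_h)+\beta(p_h-T_h)$ and $\alpha+\beta>\phi$, then recover $T_h=p_h-(p_h-T_h)$.
\item The pair (iii)+(iv) genuinely requires a uniform lower bound $\inf_{\Omega}\beta>0$, since Assumption~\ref{assumption:TPE_model_problem} only gives $\beta>0$ pointwise. Consequently $a_1$ also depends on that bound, not only on $\phi$, $\beta_M$, $\lambda_M$.
\end{itemize}
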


\subsection{Proof of Theorem~\ref{thm:stability}}
In the following, we report the proof of Theorem~\ref{thm:stability}, that investigates the stability of the \textbf{F-H-M} problem.

\begin{proof}
	We start the proof by focusing on the fluid equation Algorithm~\hyperref[eq:F_H_M_splitting_1]{3--\textit{Step1}}. To this aim, we take $(\mathbf{v}_h, q_h, S_h, \psi_h) = (\mathbf{0}, p_h^{k+1}, 0, 0)$ as test function. This choice leads to:
	\begin{equation}
		\label{eq:stab_step_1}
		\begin{aligned}
			& \mathcal{M}_{p}(p_h^{k+1},p_h^{k+1}) + \mathcal{A}_h^{p}(p_h^{k+1},p_h^{k+1}) = (g,p_h^{k+1}) - \mathcal{M}_{pT}(T_h^k, p_h^{k+1}) - \mathcal{M}_{p\varphi}(p_h^{k+1}, \varphi_h^{k}),  
		\end{aligned}
	\end{equation}
	by using Lemma~\ref{lem:boundcoerc_bil_forms} and Young's inequality we obtain
	\begin{equation}
		\label{eq:stab_step_2}
		\begin{aligned}
			& \frac{c_{\alpha}}{2} \| p_h^{k+1} \|^2 + 
			\| p_h^{k+1} \|_{dG,p}^2 \lesssim \frac{1}{2 c_{\alpha}} \left\|g - b_{\alpha\beta}\, T^k - \frac{\alpha}{\lambda}\varphi^k\right\|^2,
		\end{aligned}
	\end{equation}
	from which we can infer a bound for the $L^2$-norm of the pressure, that reads:
	\begin{equation}
		\label{eq:stab_step_3}
		\begin{aligned}
			& \| p_h^{k+1} \|^2 \lesssim \frac{1}{c_{\alpha}^2} \left\|g - b_{\alpha\beta} \, T^k - \frac{\alpha}{\lambda}\varphi^k\right\|^2.
		\end{aligned}
	\end{equation}
	We now take $(\mathbf{v}_h, q_h, S_h, \psi_h) = (\mathbf{0}, 0, T_h^{k+1}, 0)$ as test function in Algorithm~\hyperref[eq:F_H_M_splitting_1]{3--\textit{Step2}}:
	\begin{equation}
		\label{eq:stab_step_4}
		\begin{aligned}
			& \mathcal{M}_{T}(T_h^{k+1}, T_h^{k+1}) + \mathcal{A}_h^{T}(T_h^{k+1},S_h) + \mathcal{C}_h(T_h^{k+1},p_h^{k+1},S_h) = (H, S_h) - \mathcal{M}_{pT}(S_h, p_h^{k+1}) - \mathcal{M}_{T\varphi}(S_h, \varphi_h^{k}),
		\end{aligned}
	\end{equation}
	By using Lemma~\ref{lem:boundcoerc_bil_forms}, Lemma~\ref{lem:bound_Ch}, Young inequality, and triangle inequality, we get:
	\begin{equation}
		\label{eq:stab_step_5}
		\begin{aligned}
			& \frac{a_{\beta}-a_1}{2} \| T_h^{k+1} \|^2 + 
			\| T_h^{k+1} \|_{dG,T}^2 \lesssim \frac{1}{2(a_{\beta} - a_1)} \left\|H - \frac{\beta}{\lambda}\varphi^k\right\|^2 + \frac{b_{\alpha\beta}}{2(a_{\beta} - a_1)} \left\| p^{k+1} \right\|^2,
		\end{aligned}
	\end{equation}
	and by using \eqref{eq:stab_step_3}, we obtain:
	\begin{equation}
		\label{eq:stab_step_6}
		\begin{aligned}
			\frac{a_{\beta}-a_1}{2} \| T_h^{k+1} \|^2 + 
			\| T_h^{k+1} \|_{dG,T}^2 \lesssim & \frac{1}{2(a_{\beta} - a_1)}\left( \left\|H - \frac{\beta}{\lambda}\varphi^k\right\|^2 + \frac{b_{\alpha\beta}}{c_{\alpha}^2} \left\| g - b_{\alpha\beta} \, T^k - \frac{\alpha}{\lambda} \varphi^k \right\|^2 \right),
		\end{aligned}
	\end{equation}
	that yields:
	\begin{equation}
		\label{eq:stab_step_7}
		\begin{aligned}
			\| T_h^{k+1} \|^2 \lesssim & \frac{1}{(a_{\beta} - a_1)^2} \left\|H - \frac{\beta}{\lambda}\varphi^k\right\|^2 + \frac{b_{\alpha\beta}}{c_{\alpha}^2 \, (a_{\beta} - a_1)^2} \left\| g - b_{\alpha\beta} \, T^k - \frac{\alpha}{\lambda} \varphi^k \right\|^2.
		\end{aligned}
	\end{equation}
	In the last step of the proof we derive a control for the displacement and the pseudo-total pressure. Taking $(\mathbf{v}_h, q_h, S_h, \psi_h) = (\mathbf{v}_h, 0, 0, 0)$ as test function in Algorithm~\hyperref[eq:F_H_M_splitting_1]{3--\textit{Step3}} we obtain:
	\begin{equation}
		\label{eq:stab_step_8}
		\begin{aligned}
			&  \mathcal{B}_h(\varphi_h^{k+1},\mathbf{v}_h) = \mathcal{A}_h^{e}(\mathbf{u}_h^{k+1},\mathbf{v}_h) -\left(\mathbf{f},\mathbf{v}_h\right).
		\end{aligned}
	\end{equation}
	Plugging \eqref{eq:stab_step_8} into Lemma~\ref{lem:gen_inf_sup}, using Lemma~\ref{lem:boundcoerc_bil_forms}, and using the discrete Poincaré-Korn inequality \cite{Botti2025}, we get \cite{Bonetti2024}:
	\begin{equation}
		\label{eq:stab_step_9}
		\begin{aligned}
			&  \mathbb{B}^2 \|\varphi_h^{k+1}\|^2 \lesssim \mathcal{D}(\varphi_h^{k+1}, \varphi_h^{k+1}) + \|\mathbf{u}_h^{k+1}\|_{dG,e}^2 + \mu_m^{-1}\|\mathbf{f}\|^2. 
		\end{aligned}
	\end{equation}
	We now consider $(\mathbf{v}_h, q_h, S_h, \psi_h) = (\mathbf{u}_h^{k+1}, 0, 0, \varphi_h^{k+1})$ as test function in Algorithm~\hyperref[eq:F_H_M_splitting_1]{3--\textit{Step3}}:
	\begin{equation}
		\label{eq:stab_step_10}
		\begin{aligned}
			& \mathcal{M}_{\varphi}(\varphi_h^{k+1},\varphi_h^{k+1}) + \mathcal{A}_h^{e}(\mathbf{u}_h^{k+1},\mathbf{u}_h^{k+1}) +  \mathcal{D}_h(\varphi_h^{k+1},\varphi_h^{k+1}) \\
			& = \left(\mathbf{f},\mathbf{u}_h^{k+1}\right) - \mathcal{M}_{p\varphi}(p_h^{k+1}, \varphi_h^{k+1}) - \mathcal{M}_{T\varphi}(T_h^{k+1}, \varphi_h^{k+1}).
		\end{aligned}
	\end{equation}
	We observe that, thanks to \eqref{eq:gen_inf_sup}, we have:
	\begin{equation}
		\label{eq:stab_step_11}
		\begin{aligned}
			&  \|\mathbf{u}_h^{k+1}\|_{dG,e}^2 +  \mathcal{D}_h(\varphi_h^{k+1},\varphi_h^{k+1}) \gtrsim \frac{\mathbb{B}}{4} \|\varphi_h^{k+1}\|^2 - \frac{\mu_m^{-1}}{4}\|\mathbf{f}\|^2 + \frac{3}{4} \|\mathbf{u}_h^{k+1}\|_{dG,e}^2
		\end{aligned}
	\end{equation}
	that leads to:
	\begin{equation}
		\label{eq:stab_step_12}
		\begin{aligned}
			& \left(\frac{1}{2\lambda} + \frac{\mathbb{B}}{4}\right) \|\varphi_h^{k+1}\|^2 + \frac{1}{2} \|\mathbf{u}_h^{k+1}\|_{dG,e}^2 \lesssim \left(\frac{1}{C_K} + \frac{\mu_m^{-1}}{4}\right) \|\mathbf{f}\|^2 + \frac{\alpha^2}{\lambda} \|p_h^{k+1}\|^2 + \frac{\beta^2}{\lambda} \|T_h^{k+1}\|^2.
		\end{aligned}
	\end{equation}
	By plugging \eqref{eq:stab_step_3}, \eqref{eq:stab_step_7} into \eqref{eq:stab_step_12}, we get obtain our result.
\end{proof}

\subsection{Proof of Theorem~\ref{thm:convergence}}
We conclude the section by proving the convergence of the splitting solution strategy, cf. Theorem~\ref{thm:convergence}. Our aim is to show that the difference of approximations at two successive iterations defines is a contracting sequence. We start by deriving the error equations. Let $(\mathbf{u}_h^{k+1}, p_h^{k+1}, T_h^{k+1}, \varphi_h^{k+1})$ and $(\mathbf{u}_h^k, p_h^k, T_h^k, \varphi_h^k)$ be the solutions to \textbf{F-H-M} at the $(k+1)^{\text{th}}$ and $k^{\text{th}}$ iterations, respectively. For all $k \geq 1$, we define:
$$
\boldsymbol{\delta}_{\mathbf{u}}^k = \mathbf{u}_h^{k+1} - \mathbf{u}_h^{k}, \quad \delta_{p}^k = p_h^{k+1} - p_h^{k}, \quad \delta_{T}^k = T_h^{k+1} - T_h^{k}, \quad \delta_{\varphi}^k = \varphi_h^{k+1} - \varphi_h^{k}. 
$$
Then, it can be observed that $(\boldsymbol{\delta}_{\mathbf{u}}^k, \delta_p^k, \delta_T^k, \delta_{\varphi}^k) \in \mathbf{V}_h^{\ell} \times V_h^{\ell} \times V_h^{\ell} \times Q_h^m$ solves the problem:
\begin{equation}
	\label{eq:error_eq_1}
	\begin{aligned}
		& \mathcal{M}_{p}(\delta_p^{k},q_h) + \mathcal{A}_h^{p}(\delta_p^{k},q_h) + \mathcal{M}_{T}(\delta_T^{k}, S_h) + \mathcal{A}_h^{T}(\delta_T^{k},S_h) + \mathcal{C}_h(T_h^{k+1},p_h^{k+1},S_h) - \mathcal{C}_h(T_h^{k},p_h^{k},S_h) \\
		& + \mathcal{M}_{\varphi}(\delta_{\varphi}^{k},\psi_h) + \mathcal{A}_h^{e}(\boldsymbol{\delta}_{\mathbf{u}}^{k},\mathbf{v}_h) - \mathcal{B}_h(\delta_{\varphi}^{k},\mathbf{v}_h) +  \mathcal{B}_h(\psi_h, \boldsymbol{\delta}_{\mathbf{u}}^{k}) +  \mathcal{D}_h(\delta_{\varphi}^{k},\psi_h) = - \mathcal{M}_{pT}(\delta_T^{k-1}, q_h) \\
		& - \mathcal{M}_{p\varphi}(q_h, \delta_{\varphi}^{k-1}) - \mathcal{M}_{pT}(S_h, \delta_p^{k}) - \mathcal{M}_{T\varphi}(S_h, \delta_{\varphi}^{k-1}) - \mathcal{M}_{p\varphi}(\delta_p^{k}, \psi_h) - \mathcal{M}_{T\varphi}(\delta_T^{k},  \psi_h).
	\end{aligned}
\end{equation}
Now, we add and subtract the terms $\pm \mathcal{C}_h(T_h^k, p_h^{k+1}, S_h)$ and we obtain
\begin{equation}
	\label{eq:error_eq_2}
	\begin{aligned}
		& \mathcal{M}_{p}(\delta_p^{k},q_h) + \mathcal{M}_{T}(\delta_T^{k}, S_h) + \mathcal{M}_{\varphi}(\delta_{\varphi}^{k},\psi_h) + \mathcal{M}_{pT}(S_h, \delta_p^{k}) + \mathcal{M}_{p\varphi}(\delta_p^{k}, \psi_h) + \mathcal{M}_{T\varphi}(\delta_T^{k},  \psi_h) \\
		& + \mathcal{A}_h^{p}(\delta_p^{k},q_h) + \mathcal{A}_h^{T}(\delta_T^{k},S_h) + \mathcal{A}_h^{e}(\boldsymbol{\delta}_{\mathbf{u}}^{k},\mathbf{v}_h) + \mathcal{C}_h(\delta_T^{k},p_h^{k+1},S_h) - \mathcal{C}_h(T_h^{k},\delta_p^{k},S_h)   - \mathcal{B}_h(\delta_{\varphi}^{k},\mathbf{v}_h) \\
		& +  \mathcal{B}_h(\psi_h, \boldsymbol{\delta}_{\mathbf{u}}^{k}) +  \mathcal{D}_h(\delta_{\varphi}^{k},\psi_h) = - \mathcal{M}_{pT}(\delta_T^{k-1}, q_h) - \mathcal{M}_{p\varphi}(q_h, \delta_{\varphi}^{k-1}) - \mathcal{M}_{T\varphi}(S_h, \delta_{\varphi}^{k-1}).
	\end{aligned}
\end{equation}
We can now proceed with the proof of Theorem~\ref{thm:convergence}.
\begin{proof}
	We start the proof by setting $(\mathbf{v}_h, q_h, S_h, \psi_h) = (\boldsymbol{\delta}_{\mathbf{u}}^k, 0, 0, 0)$ in \eqref{eq:error_eq_2} for bounding $\delta_{\varphi}^k$, i.e.,
	\begin{equation}
		\label{eq:error_eq_3}
		\begin{aligned}
			& \mathcal{A}_h^{e}(\boldsymbol{\delta}_{\mathbf{u}}^{k}, \boldsymbol{\delta}_{\mathbf{u}}^{k}) - \mathcal{B}_h(\delta_{\varphi}^{k},\boldsymbol{\delta}_{\mathbf{u}}^{k}) = 0.
		\end{aligned}
	\end{equation}
	As in the proof of Theorem~\ref{thm:stability}, this leads to:
	\begin{equation}
		\label{eq:error_eq_4}
		\begin{aligned}
			& \mathbb{B}^2 \|\delta_{\varphi}^k\|^2 \lesssim \mathcal{D}_h(\delta_{\varphi}^k, \delta_{\varphi}^k) + \|\boldsymbol{\delta}_{\mathbf{u}}^k\|_{dG,e}^2.
		\end{aligned}
	\end{equation}
	Then, we can use Lemma~\ref{lem:bound_Ch} to obtain
	$
	-\frac{a_1}{2}\|\delta_T^k\|^2 \lesssim \mathcal{C}_h(\delta_T^k, p_h^k, \delta_T^k),
	$
	from which we can infer:
	\begin{equation}
		\label{eq:error_eq_6}
		\begin{aligned}
			\frac{a_1}{2} \|\delta_T^k\|^2 + b_1 \|\delta_{\varphi}^k\|^2 + c_1 \|\delta_p^k\|^2 + \|\boldsymbol{\delta}_{\mathbf{u}}^k\|_{dG,e}^2 + \|\delta_T^k\|_{dG,T}^2 + \|\delta_p^k\|_{dG,p}^2
			\lesssim a_1 \|\delta_T^k\|^2 + b_1 \|\delta_{\varphi}^k\|^2 \\
			+ c_1 \|\delta_p^k\|^2 + \|\boldsymbol{\delta}_{\mathbf{u}}^k\|_{dG,e}^2 + \|\delta_T^k\|_{dG,T}^2 + \|\delta_p^k\|_{dG,p}^2 + \mathcal{C}_h(\delta_T^k, p_h^k, \delta_T^k),
		\end{aligned}
	\end{equation}
	by summing the same terms at left and right hand side. We can rearrange \eqref{eq:error_eq_6} to obtain:
	\begin{equation}
		\label{eq:error_eq_7}
		\begin{aligned}
			\frac{a_1}{2} \|\delta_T^k\|^2 + b_1 \|\delta_{\varphi}^k\|^2 + c_1 \|\delta_p^k\|^2 + \|\boldsymbol{\delta}_{\mathbf{u}}^k\|_{dG,e}^2 + \|\delta_T^k\|_{dG,T}^2 + \|\delta_p^k\|_{dG,p}^2
			\lesssim \mathcal{M}((\delta_p^k, \delta_T^k, \delta_{\varphi}^k), (\delta_p^k, \delta_T^k, \delta_{\varphi}^k)) \\
			+ \mathcal{D}_h(\delta_{\varphi}^k, \delta_{\varphi}^k) + \mathcal{A}_h^e(\boldsymbol{\delta}_{\mathbf{u}}^k,\boldsymbol{\delta}_{\mathbf{u}}^k) + \mathcal{A}_h^T(\delta_T^k, \delta_T^k) + \mathcal{A}_h^p(\delta_p^k, \delta_p^k) + \mathcal{C}_h(\delta_T^k, p_h^k, \delta_T^k),
		\end{aligned}
	\end{equation}
	We now set $(\mathbf{v}_h, q_h, S_h, \psi_h) = (\boldsymbol{\delta}_{\mathbf{u}}^k, \delta_p^k, \delta_T^k, \delta_{\varphi}^k)$ in \eqref{eq:error_eq_2}, to obtain:
	\begin{equation}
		\label{eq:error_eq_8}
		\begin{aligned}
			& \mathcal{M}_{p}(\delta_p^{k},\delta_p^{k}) + \mathcal{M}_{T}(\delta_T^{k}, \delta_T^{k}) + \mathcal{M}_{\varphi}(\delta_{\varphi}^{k}, \delta_{\varphi}^{k}) + \mathcal{M}_{pT}(\delta_T^{k}, \delta_p^{k}) + \mathcal{M}_{p\varphi}(\delta_p^{k}, \delta_{\varphi}^{k}) + \mathcal{M}_{T\varphi}(\delta_T^{k}, \delta_{\varphi}^{k}) \\
			& + \mathcal{A}_h^{p}(\delta_p^{k},\delta_p^{k})  + \mathcal{A}_h^{T}(\delta_T^{k},\delta_T^{k}) + \mathcal{A}_h^{e}(\boldsymbol{\delta}_{\mathbf{u}}^{k}, \boldsymbol{\delta}_{\mathbf{u}}^{k}) + \mathcal{C}_h(\delta_T^{k},p_h^{k+1},\delta_T^{k}) - \mathcal{C}_h(T_h^{k},\delta_p^{k},\delta_T^{k}) +  \mathcal{D}_h(\delta_{\varphi}^{k}, \delta_{\varphi}^{k}) = \\
			& - \mathcal{M}_{pT}(\delta_T^{k-1}, \delta_p^{k}) - \mathcal{M}_{p\varphi}(\delta_p^{k}, \delta_{\varphi}^{k-1}) - \mathcal{M}_{T\varphi}(\delta_T^{k}, \delta_{\varphi}^{k-1}).
		\end{aligned}
	\end{equation}
	We can now sum and subtract $\mathcal{M}_{pT}(\delta_T^{k}, \delta_p^{k})$, $\mathcal{M}_{p\varphi}(\delta_p^{k}, \delta_{\varphi}^{k})$, $\mathcal{M}_{T\varphi}(\delta_T^{k}, \delta_{\varphi}^{k})$ in \eqref{eq:error_eq_8} to get:
	\begin{equation}
		\label{eq:error_eq_9}
		\begin{aligned}
			& \mathcal{M}((\delta_p^{k},\delta_T^{k},\delta_{\varphi}^{k}),(\delta_p^{k},\delta_T^{k},\delta_{\varphi}^{k})) + \mathcal{A}_h^{p}(\delta_p^{k},\delta_p^{k})  + \mathcal{A}_h^{T}(\delta_T^{k},\delta_T^{k}) + \mathcal{A}_h^{e}(\boldsymbol{\delta}_{\mathbf{u}}^{k}, \boldsymbol{\delta}_{\mathbf{u}}^{k}) + \mathcal{C}_h(\delta_T^{k},p_h^{k+1},\delta_T^{k}) +  \mathcal{D}_h(\delta_{\varphi}^{k}, \delta_{\varphi}^{k}) \\
			& = \mathcal{C}_h(T_h^{k},\delta_p^{k},\delta_T^{k}) - \mathcal{M}_{pT}(\delta_T^{k} - \delta_T^{k-1}, \delta_p^{k}) - \mathcal{M}_{p\varphi}(\delta_p^{k}, \delta_{\varphi}^{k} - \delta_{\varphi}^{k-1}) - \mathcal{M}_{T\varphi}(\delta_T^{k}, \delta_{\varphi}^{k} - \delta_{\varphi}^{k-1}).
		\end{aligned}
	\end{equation}
	that combined with \eqref{eq:error_eq_7} leads to:
	\begin{equation}
		\label{eq:error_eq_10}
		\begin{aligned}
			& \frac{a_1}{2} \|\delta_T^k\|^2 + b_1 \|\delta_{\varphi}^k\|^2 + c_1 \|\delta_p^k\|^2 + \|\boldsymbol{\delta}_{\mathbf{u}}^k\|_{dG,e}^2 + \|\delta_T^k\|_{dG,T}^2 + \|\delta_p^k\|_{dG,p}^2
			\lesssim \\
			& \mathcal{C}_h(T_h^{k},\delta_p^{k},\delta_T^{k}) - \mathcal{M}_{pT}(\delta_T^{k} - \delta_T^{k-1}, \delta_p^{k}) - \mathcal{M}_{p\varphi}(\delta_p^{k}, \delta_{\varphi}^{k} - \delta_{\varphi}^{k-1}) - \mathcal{M}_{T\varphi}(\delta_T^{k}, \delta_{\varphi}^{k} - \delta_{\varphi}^{k-1}).
		\end{aligned}
	\end{equation}
	We are now left to bound the right hand side of \eqref{eq:error_eq_10}. We start by the trilinear form, that can be treated as in \cite{Bonetti2024}, while the bilinear forms can be bounded as follows:
	\begin{equation}
		\label{eq:error_eq_11}
		(a^{k+1} - a^k, b) \leq \epsilon \| b \|^2 + \frac{1}{2\epsilon}\|a^{k+1}\|^2 + \frac{1}{2\epsilon}\|a^k\|^2,
	\end{equation}
	where we have used the Cauchy-Schwartz's, the Young's, and parallelogram inequalities. The terms that appear at right hand side of \eqref{eq:error_eq_10} are then bounded as follows:
	\begin{equation}
		\label{eq:error_eq_12}
		\begin{aligned}
			\lvert\, \mathcal{C}_h(T_h^{k},\delta_p^{k},\delta_T^{k}) \,\rvert & \lesssim \frac{a_1}{2 \epsilon_c} \| \delta_T^k \|^2 + \frac{\epsilon_c \, c_{f_M}^2 (1 + C_{tr}^4)}{a_1}\lvert T_h^k \rvert_{dG,\infty}^2 \|\delta_p^k\|^2, \\
			\lvert\, -\mathcal{M}_{pT}(\delta_T^{k}-\delta_T^{k-1}, \delta_p^{k}) \,\rvert & \leq \epsilon_{pT}\|\delta_p^{k}\|^2 + \frac{b_{\alpha\beta}^2}{2\epsilon_{pT}}\|\delta_T^{k}\|^2 + \frac{b_{\alpha\beta}^2}{2\epsilon_{pT}}\|\delta_T^{k-1}\|^2, \\
			\lvert\, -\mathcal{M}_{p\varphi}(\delta_p^{k}, \delta_{\varphi}^{k}-\delta_{\varphi}^{k-1}) \,\rvert & \leq \epsilon_{p\varphi}\|\delta_p^{k}\|^2 + \frac{\alpha^2}{2\lambda^2\epsilon_{p\varphi}}\|\delta_{\varphi}^{k}\|^2 + \frac{\alpha^2}{2\lambda^2\epsilon_{p\varphi}}\|\delta_{\varphi}^{k-1}\|^2,\\
			\lvert\, -\mathcal{M}_{T\varphi}(\delta_T^{k}, \delta_{\varphi}^{k}-\delta_{\varphi}^{k-1}) \,\rvert & \leq \epsilon_{T\varphi}\|\delta_T^{k}\|^2 + \frac{\beta^2}{2\lambda^2\epsilon_{T\varphi}}\|\delta_{\varphi}^{k}\|^2 + \frac{\beta^2}{2\lambda^2\epsilon_{T\varphi}}\|\delta_{\varphi}^{k-1}\|^2
		\end{aligned}
	\end{equation}
	By plugging \eqref{eq:error_eq_12} into \eqref{eq:error_eq_10} we obtain:
	\begin{equation}
		\label{eq:error_eq_13}
		\begin{aligned}
			& \frac{a_1}{2} \|\delta_T^k\|^2 + b_1 \|\delta_{\varphi}^k\|^2 + c_1 \|\delta_p^k\|^2 + \|\boldsymbol{\delta}_{\mathbf{u}}^k\|_{dG,e}^2 + \|\delta_T^k\|_{dG,T}^2 + \|\delta_p^k\|_{dG,p}^2
			\lesssim \frac{a_1}{2 \epsilon_c} \| \delta_T^k \|^2 \\
			& + \frac{\epsilon_c \, c_{f_M}^2 (1 + C_{tr}^4)}{a_1}\lvert T_h^k \rvert_{dG,\infty}^2 \|\delta_p^k\|^2
			+ \epsilon_{pT}\|\delta_p^{k}\|^2 + \frac{b_{\alpha\beta}^2}{2\epsilon_{pT}}\|\delta_T^{k}\|^2 + \frac{b_{\alpha\beta}^2}{2\epsilon_{pT}}\|\delta_T^{k-1}\|^2
			+ \epsilon_{p\varphi}\|\delta_p^{k}\|^2 \\
			& + \frac{\alpha^2}{2\lambda^2\epsilon_{p\varphi}}\|\delta_{\varphi}^{k}\|^2 + \frac{\alpha^2}{2\lambda^2\epsilon_{p\varphi}}\|\delta_{\varphi}^{k-1}\|^2
			+ \epsilon_{T\varphi}\|\delta_T^{k}\|^2 + \frac{\beta^2}{2\lambda^2\epsilon_{T\varphi}}\|\delta_{\varphi}^{k}\|^2 + \frac{\beta^2}{2\lambda^2\epsilon_{T\varphi}}\|\delta_{\varphi}^{k-1}\|^2. 
		\end{aligned}
	\end{equation}
	We choose $\epsilon_c = 2$, $\epsilon_{pT} = b_{\alpha\beta}^2/2$, $\epsilon_{p\varphi} = \alpha^2/\lambda^2$, and $\epsilon_{T\varphi} = \beta^2/\lambda^2$, to obtain:
	\begin{equation}
		\label{eq:error_eq_14}
		\begin{aligned}
			& \left( \frac{a_1}{4} - \frac{\beta^2}{\lambda^2} - 1 \right) \|\delta_T^k\|^2 + \left( b_1-1 \right) \|\delta_{\varphi}^k\|^2 + \left( c_1 - \frac{b_{\alpha\beta}^2}{2} - \frac{\alpha^2}{\lambda^2} - \frac{2 \, c_{f_M}^2 (1 + C_{tr}^4)}{a_1}\lvert T_h^k \rvert_{dG,\infty}^2 \right) \|\delta_p^k\|^2 \\
			& + \|\boldsymbol{\delta}_{\mathbf{u}}^k\|_{dG,e}^2 + \|\delta_T^k\|_{dG,T}^2 + \|\delta_p^k\|_{dG,p}^2
			\lesssim \|\delta_T^{k-1}\|^2 + \|\delta_{\varphi}^{k-1}\|^2
		\end{aligned}
	\end{equation}
	Under Assumptions~\ref{eq:convergece_ass}, the map $(\boldsymbol{\delta}_{\mathbf{u}}^{k-1}, \delta_p^{k-1}, \delta_T^{k-1}, \delta_{\varphi}^{k-1}) \rightarrow (\boldsymbol{\delta}_{\mathbf{u}}^k, \delta_p^k, \delta_T^k, \delta_{\varphi}^k)$ is a contraction. Then, the conclusion follows by applying the Banach fixed-point theorem.
\end{proof}

\section{Conclusions}
In this work we have presented two different splitted iterative strategy for the solution of  a four-field PolyDG-WSIP space discretization of the non-linear fully-coupled thermo-hydro-mechanical problem. The stability estimate and the convergence of the iteration schemes are presented. Numerical simulations are performed to assess the convergence and robustness properties of the method and to test the applicability of such strategies for real problems' simulations. 

Further developments of the present work are possible. In particular, we mention an in-depth investigation of the preconditioning of subproblems arising from the splitting schemes, but also the preconditioning of the whole system. Moreover, an extension to unsteady problems, both in the quasi-static and dynamic regimes, is of interest. Finally, the extension to other non-linear coupled models and to viscous media can be relevant both in terms of real-life applications and numerical analysis. 

\section*{Funding}
This work received funding from the European Union (ERC SyG, NEMESIS, project number 101115663). Views and opinions expressed are, however, those of the authors only and do not necessarily reflect those of the European Union or the European Research Council Executive Agency. Neither the European Union nor the granting authority can be held responsible for them. PFA, SB, and MB are members of INdAM-GNCS. MB kindly acknowledge partial financial support by INdAM-GNCS project 2025 CUP E53C24001950001. The present research is part of the activities of “Dipartimento di Eccellenza 2023-2027”, funded by MUR, Italy.

\bibliography{Sections/bibliography}

\appendix
\section{Computational times}
\label{sec:appendix}
The aim of this appendix is to show the computational times for the test cases of Section~\ref{sec:NumericalResults}, where only the speed-ups have been reported. All the numerical simulations presented in Section~\ref{sec:NumericalResults}, Section~\ref{sec:geo3D} were performed using the \texttt{NEMESIS} cluster (2 CPU AMD EPYC 9634 84-Core Processor (336 threads), 1.5 TB di RAM -- Resource for sequential applications) at MOX, Department of Mathematics, Politecnico di Milano.

\begin{table}[H]
	\centering 
	\footnotesize
	\begin{tabular}{c c c c c c c c c c}
		& $N_{\text{el}}$ & $50$ & $100$  & $310$ & $1000$  & $3100$ & $10000$  & $31000$ & $100000$ \B\\
		\hline
		\multirow{2}{*}{\textbf{FHM}} 
		& \#it & $4$ & $4$ & $4$ & $4$ & $4$ & $4$ & $4$ & $4$ \T\\
		& CPU [\si{\second}] & $0.457$ & $0.905$ & $3.372$ & $13.698$ & $59.116$ & $286.246$ & $1709.860$ & $12415.693$\B\\
		\hline
		\multirow{3}{*}{\textbf{FM-H}} 
		& \#it & $4$ & $4$ & $4$ & $4$ & $4$ & $4$ & $4$ & $4$ \T\\
		& Speed-up & 1.19 & 1.31 & 1.49 & 1.73 & 2.13 & 2.58 & 3.30 & 4.83 \T\B\\
		& CPU [\si{\second}] & $0.383$ & $0.691$ & $2.259$ & $7.910$ & $27.756$ & $111.158$ & $518.363$ & $2572.766$ \B\\
		\hline
		\multirow{3}{*}{\textbf{F-H-M}} 
		& \#it & $3$ & $3$ & $3$ & $3$ & $3$ & $3$ & $3$ & $3$ \T\\
		& Speed-up & 0.46 & 1.31 & 1.59 & 1.91 & 2.29 & 2.86 & 3.69 & 4.93 \T\B\\ 
		& CPU[\si{\second}] & $0.996$ & $0.691$ & $2.115$ & $7.184$ & $25.849$ & $100.247$ & $463.334$ & $2517.394$ \B\\
		\hline
	\end{tabular}
	\caption{Convergence test vs $h$ in 3D: number of iterations of the three solution algorithms for the convergence, speed-up of the splitting schemes with respect to the monolithic approach, and computational times [\si{\second}] with respect to $N_{\text{el}}$.}
	\label{tab:ConvH_itertimes_appendix}
\end{table}

\begin{table}[H]
	\centering 
	\footnotesize
	\begin{tabular}{c c c c c c c c c c}
		& $\ell$ & $1$  & $2$ & $3$  & $4$ & $5$  & $6$ & $7$ & $8$ \B\\
		\hline
		\multirow{2}{*}{\textbf{FHM}} 
		& \#it & $4$ & $4$ & $4$ & $4$ & $4$ & $4$ & $4$ & $4$ \T\\
		& CPU[\si{\second}] & $1.028$ & $0.488$ & $0.696$ & $1.163$ & $2.146$ & $3.474$ & $5.905$ & $10.817$ \B\\
		\hline
		\multirow{3}{*}{\textbf{FM-H}} 
		& \#it  & $4$ & $4$ & $4$ & $4$ & $4$ & $4$ & $4$ & $4$ \T\\
		& Speed-up & 3.19 & 1.45 & 1.49 & 1.59 & 1.80 & 1.82 & 1.87 & 1.96 \T\B \\
		& CPU[\si{\second}] & $0.323$ & $0.338$ & $0.467$ & $0.732$ & $1.192$ & $1.908$ & $3.150$ & $5.521$ \B\\
		\hline
		\multirow{3}{*}{\textbf{F-H-M}} 
		& \#it  & $4$ & $4$ & $4$ & $4$ & $4$ & $4$ & $4$ & $4$ \T\\
		& Speed-up & 3.49 & 1.41 & 1.37 & 1.44 & 1.58 & 1.51 & 1.17 & 1.13 \T\B \\
		& CPU[\si{\second}] & $0.295$ & $0.346$ & $0.507$ & $0.805$ & $1.359$ & $2.296$ & $5.048$ & $9.558$ \B \\
		\hline
	\end{tabular}
	\caption{Convergence test vs $\ell$ in 2D: number of iterations of the three solution algorithms for the convergence, speed-up of the splitting schemes with respect to the monolithic approach, and computational times [\si{\second}] with respect to $N_{\text{el}}$.}
	\label{tab:ConvP_itertimes_appendix}
\end{table}

\begin{table}[H]
	\centering 
	\footnotesize
	\begin{tabular}{c c c c c c c c c c}
		& $N_{\text{el}}$ & $48$ & $1296$  & $6000$ & $16464$  & $34992$ & $63888$  & $105456$ & $162000$ \B\\
		\hline
		\multirow{2}{*}{\textbf{FHM}} 
		& \#it & $3$ & $3$ & $3$ & $4$ & $4$ & $4$ & $4$ & $4$ \T\\
		& CPU [\si{\second}] & $0.395$ & $11.802$ & $58.601$ & $232.643$ & $537.915$ & $1071.775$ & $2087.690$ & $4798.707$\B\\
		\hline
		\multirow{3}{*}{\textbf{FM-H}} 
		& \#it & $3$ & $3$ & $3$ & $3$ & $3$ & $3$ & $3$ & $3$ \T\\
		& Speed-up & $1.05$ & $1.39$ & $1.39$ & $1.69$ & $1.73$ & $1.73$ & $1.64$ & $1.60$ \T\B\\
		& CPU [\si{\second}] & $0.375$ & $8.480$ & $42.094$ & $137.795$ & $310.286$ & $619.494$ & $1270.269$ & $2999.107$ \B\\    
		\hline
		\multirow{3}{*}{\textbf{F-H-M}} 
		& \#it & $3$ & $3$ & $3$ & $3$ & $3$ & $3$ & $3$ & $3$ \T\\
		& Speed-up & $1.00$ & $1.82$ & $1.91$ & $2.38$ & $2.48$ & $2.41$ & $2.56$ & $3.07$ \T\B\\
		& CPU[\si{\second}] & $0.394$ & $6.496$ & $30.675$ & $97.904$ & $217.334$ & $444.432$ & $815.054$ & $1562.066$ \B\\
		\hline
	\end{tabular}
	\caption{Convergence test vs $h$ in 3D: number of iterations of the three solution algorithms for the convergence, speed-up of the splitting schemes with respect to the monolithic approach, and computational times [\si{\second}] with respect to $N_{\text{el}}$.}
	\label{tab:ConvH3D_itertimes_appendix}
\end{table}

\begin{table}[H]
	\centering 
	\footnotesize
	\begin{tabular}{c c c c c c c c c c}
		& $N_{\text{el}}$ & $50$ & $100$  & $310$ & $1000$  & $3100$ & $10000$  & $31000$ & $100000$ \B\\
		\hline
		\multirow{2}{*}{\textbf{FHM}} 
		& \#it & $5$ & $3$ & $3$ & $3$ & $3$ & $3$ & $3$ & $3$ \T\\
		& CPU [\si{\second}] & $0.528$ & $0.761$ & $2.791$ & $11.073$ & $47.140$ & $223.988$ & $1321.004$ & $9426.582$ \B\\
		\hline
		\multirow{3}{*}{\textbf{FM-H}} 
		& \#it & $3$ & $3$ & $3$ & $3$ & $3$ & $3$ & $3$ & $3$ \T\\
		& Speed-up & 1.59 & 1.30 & 1.45 & 1.64 & 1.98 & 2.34 & 2.91 & 4.12  \T\B\\
		& CPU [\si{\second}] & $0.332$ & $0.587$ & $1.920$ & $6.742$ & $23.757$ & $95.810$ & $454.086$ & $2288.489$ \B\\
		\hline
		\multirow{3}{*}{\textbf{F-H-M}} 
		& \#it & $2$ & $2$ & $2$ & $2$ & $2$ & $2$ & $2$ & $2$ \T\\
		& Speed-up & 0.57 & 1.39 & 1.66 & 2.00 & 2.40 & 2.99 & 3.91 & 5.23 \T\B\\
		& CPU[\si{\second}] & $0.931$ & $0.549$ & $1.680$ & $5.526$ & $19.638$ & $74.890$ & $338.267$ & $1801.705$ \B\\
		\hline
	\end{tabular}
	\caption{Robustness test $\mathbf{(i)}$: number of iterations of the three solution algorithms for the convergence, speed-up of the splitting schemes with respect to the monolithic approach, and computational times [\si{\second}] with respect to $N_{\text{el}}$.}
	\label{tab:Rob1_itertimes_appendix}
\end{table}

\begin{table}[H]
	\centering 
	\footnotesize
	\begin{tabular}{c c c c c c c c c c}
		& $N_{\text{el}}$ & $50$ & $100$  & $310$ & $1000$  & $3100$ & $10000$  & $31000$ & $100000$ \B\\
		\hline
		\multirow{2}{*}{\textbf{FHM}} 
		& \#it & $2$ & $2$ & $2$ & $2$ & $2$ & $2$ & $2$ & $2$ \T\\
		& CPU [\si{\second}] & $0.321$ & $0.609$ & $2.185$ & $8.419$ & $34.761$ & $161.515$ & $920.434$ & $7564.624$ \B\\
		\hline
		\multirow{3}{*}{\textbf{FM-H}} 
		& \#it & $4$ & $4$ & $4$ & $4$ & $4$ & $4$ & $4$ & $4$ \T\\
		& Speed-up & 0.83 & 0.88 & 0.97 & 1.07 & 1.24 & 1.46 & 1.79 & 2.86 \T\B\\
		& CPU [\si{\second}] & $0.386$ & $0.694$ & $2.262$ & $7.897$ & $27.936$ & $110.904$ & $515.060$ & $2648.855$ \B\\
		\hline
		\multirow{3}{*}{\textbf{F-H-M}} 
		& \#it & $5$ & $5$ & $5$ & $5$ & $5$ & $5$ & $5$ & $5$ \T\\
		& Speed-up & 0.28 & 0.67 & 0.73 & 0.80 & 0.92 & 1.0656 & 1.29 & 1.93 \T\B\\
		& CPU[\si{\second}] & $1.163$ & $0.913$ & $2.981$ & $10.460$ & $37.843$ & $151.573$ & $711.494$ & $3916.191$ \B\\
		\hline
	\end{tabular}
	\caption{Robustness test $\mathbf{(ii)}$: number of iterations of the three solution algorithms for the convergence, speed-up of the splitting schemes with respect to the monolithic approach, and computational times [\si{\second}] with respect to $N_{\text{el}}$.}
	\label{tab:Rob2_itertimes_appendix}
\end{table}

\begin{table}[H]
	\centering 
	\footnotesize
	\begin{tabular}{c c c c c c c c c c}
		& $N_{\text{el}}$ & $50$ & $100$  & $310$ & $1000$  & $3100$ & $10000$  & $31000$ & $100000$ \B\\
		\hline
		\multirow{2}{*}{\textbf{FHM}} 
		& \#it & $5$ & $5$ & $5$ & $5$ & $5$ & $5$ & $5$ & $5$ \T\\
		& CPU [\si{\second}] & $0.516$ & $1.045$ & $3.973$ & $16.362$ & $72.099$ & $352.188$ & $2109.475$ & $16006.520$ \B\\
		\hline
		\multirow{3}{*}{\textbf{FM-H}} 
		& \#it & $5$ & $5$ & $5$ & $5$ & $5$ & $5$ & $5$ & $5$ \T\\
		& Speed-up & 1.17 & 1.33 & 1.53 & 1.81 & 2.27 & 2.82 & 3.67 & 5.59 \T\B\\
		& CPU [\si{\second}] & $0.443$ & $0.784$ & $2.599$ & $9.055$ & $31.829$ & $124.787$ & $574.928$ & $2865.303$ \B\\
		\hline
		\multirow{3}{*}{\textbf{F-H-M}} 
		& \#it & $5$ & $4$ & $5$ & $5$ & $5$ & $5$ & $5$ & $5$ \T\\
		& Speed-up & 0.44 & 1.33 & 1.33 & 1.56 & 1.88 & 2.32 & 2.94 & 4.06 \T\B\\
		& CPU[\si{\second}] & $1.181$ & $0.786$ & $2.996$ & $10.498$ & $38.272$ & $152.071$ & $716.399$ & $3946.521$ \B\\
		\hline
	\end{tabular}
	\caption{Robustness test $\mathbf{(iii)}$: number of iterations of the three solution algorithms for the convergence, speed-up of the splitting schemes with respect to the monolithic approach, and computational times [\si{\second}] with respect to $N_{\text{el}}$.}
	\label{tab:Rob3_itertimes_appendix}
\end{table}

\begin{table}[H]
	\centering 
	\footnotesize
	\begin{tabular}{c c c c c c c c c c}
		& $N_{\text{el}}$ & $50$ & $100$  & $310$ & $1000$  & $3100$ & $10000$  & $31000$ & $100000$ \B\\
		\hline
		\multirow{2}{*}{\textbf{FHM}} 
		& \#it & $2$ & $2$ & $2$ & $2$ & $2$ & $2$ & $2$ & $-$\T\\
		& CPU [\si{\second}] & 0.314 & 0.599 & 2.153 & 8.322 & 34.188 & 159.203 & 974.375  & $-$ \B\\
		\hline
		\multirow{3}{*}{\textbf{FM-H}} 
		& \#it & $3$ & $3$ & $3$ & $3$ & $3$ & $3$ & $3$ & $3$ \T\\
		& Speed-up & 0.48 & 0.49 & 0.54 & 0.60 & 0.71 & 0.86 & 1.21 & $-$ \T\B\\
		& CPU [\si{\second}] & 0.653 & 1.223 & 4.020 & 13.983 & 48.146 & 185.011 & 803.727 & 4048.949 \B\\
		\hline
		\multirow{3}{*}{\textbf{F-H-M}} 
		& \#it & $2$ & $2$ & $2$ & $2$ & $2$ & $2$ & $2$ & $2$ \T\\
		& Speed-up & 0.21 & 0.39 & 0.42 & 0.45 & 0.50 & 0.57 & 0.74 & $-$ \T\B\\
		& CPU[\si{\second}] & 1.469 & 1.520 & 5.186 & 18.682 & 68.628 & 278.019 & 1324.331 & 7494.267 \B\\
		\hline
	\end{tabular}
	\caption{Robustness test $\mathbf{(iv)}$: number of iterations of the three solution algorithms for the convergence, speed-up of the splitting schemes with respect to the monolithic approach, and computational times [\si{\second}] with respect to $N_{\text{el}}$.}
	\label{tab:Rob4_itertimes_appendix}
\end{table}

\begin{table}[H]
	\centering 
	\footnotesize
	\begin{tabular}{c c c c c c}
		& $N_{\text{el}}$ & $96000$ & $324000$  & $768000$ & $1500000$ \B\\
		\hline
		\multirow{2}{*}{\textbf{FHM}} 
		& \#it & $3$ & $3$ & $-$ &  $-$\T\\
		& CPU [\si{\second}] & $880.48$ & $3307.83$ & $-$ &  $-$ \B\\
		\hline
		\multirow{3}{*}{\textbf{FM-H}} 
		& \#it & $5$ & $5$ & $5$ & $-$ \T\\
		& Speed-up & $1.76$ & $1.89$ & $-$ & $-$ \T\B\\
		& CPU [\si{\second}] & $500.95$ & $ 1746.36$ & $ 4437.31$ & $-$ \B\\
		\hline
		\multirow{3}{*}{\textbf{F-H-M}} 
		& \#it & $18$ & $18$ & $18$ & $18$ \T\\
		& Speed-up & $0.24$ & $0.28$ & $-$ & $-$ \T\B\\
		& CPU [\si{\second}] & $3642.24$ & $11713.36$ & $32132.50$ & $64021.62$ \B\\
		\hline
	\end{tabular}
	\caption{Geothermal test case in three dimensions: number of iterations of the three solution algorithms for the convergence and computational times [\si{\second}] of the three solutions algorithms with respect to the number of elements.}
	\label{tab:geo_itertimes_appendix}
\end{table}

\end{document}

%% file: Fig/Splitting_F_H_M.tikz
\tikzstyle{Fprob} = [rectangle, rounded corners, minimum width=2cm, minimum height=1cm,text centered, draw=black, fill=myblue!30]

\tikzstyle{Hprob} = [rectangle, rounded corners, minimum width=2cm, minimum height=1cm,text centered, draw=black, fill=myred!30]

\tikzstyle{Mprob} = [rectangle, rounded corners, minimum width=2cm, minimum height=1cm,text centered, draw=black, fill=myyellow!20]

\tikzstyle{Others} = [rectangle, rounded corners, minimum width=2cm, minimum height=0.6cm,text centered, draw=black, fill=mygreen!20]

\tikzstyle{Others_2} = [rectangle, rounded corners, minimum width=3.8cm, minimum height=0.6cm,text centered, draw=black, fill=mygreen!20]

\tikzstyle{Question} = [rectangle, rounded corners, minimum width=2cm, minimum height=0.6cm,text centered, draw=black, fill=white!20]

\tikzstyle{arrow} = [thick,->,>=stealth]

\begin{tikzpicture}[node distance=2cm]
\centering

\footnotesize

\node (Initial2) [Question, align = center, yshift = +0.4cm] {At iteration $k \geq 0$, \\ given $(\mathbf{u}^k, p^k, T^k, \varphi^k)$};

\node (Fprob) [Fprob, below of=Initial2, align = center, yshift = +0.4cm] {Solve \textbf{F} \\ $\rightarrow p^{k+1}$};

\node (Hprob) [Hprob, right of = Fprob, align = center, xshift = 1.7cm] {Solve \textbf{H} \\ $\rightarrow T^{k+1}$};

\node (Mprob) [Mprob, right of = Hprob, align = center, xshift = 1.7cm] {Solve \textbf{M} \\ $\rightarrow \mathbf{u}^{k+1}, \varphi^{k+1}$};

\node (Cond) [Question, above of = Mprob, yshift = -0.4cm] {Check convergence};

\node (Fine) [Others_2, right of = Cond, align = center, xshift = 2.5cm] {$(\mathbf{u}, p, T, \varphi)$};

\draw [arrow] (Initial2) -- (Fprob);

\draw [arrow] (Fprob) -- node[anchor=north, align = center, yshift= -0.45cm] {\scriptsize$(\mathbf{u}^k, \color{black!100}{p^{k+1}} \color{black}{,T^k, \varphi^k)}$}(Hprob);

\draw [arrow] (Hprob) -- node[anchor=north, align = center, yshift= -0.45cm] {\scriptsize $(\mathbf{u}^k, \color{black!100}{p^{k+1}, T^{k+1}} \color{black}{,\varphi^k)}$}(Mprob);

\draw [arrow] (Mprob) -- (Cond);

\draw [arrow] (Cond) -- node[anchor=south, align = center] {Yes}(Fine);
 
\draw [arrow] (Cond) -- node[anchor=south, align = center] {No $\rightarrow k = k+1$}(Initial2);

\end{tikzpicture}

%% file: Fig/ConvH_2D.tikz
\begin{center}
\begin{figure}[ht]
\begin{subfigure}[b]{1\textwidth}
\begin{subfigure}[b]{0.4\textwidth}
\begin{tikzpicture}
\begin{axis}[%
width=5cm,
height=3.8cm,
at={(0\textwidth,0\textwidth)},
scale only axis,
xmode=log,
xmin=3,
xmax=250,
xminorticks=true,
xlabel={\footnotesize $1/h$},
ymode=log,
ymin=3e-9,
ymax=0.06,
yminorticks=true,
ylabel={\footnotesize $L^2$-errors},
legend style={draw=none,fill=none,legend cell align=left},
legend pos=outer north east,
]

\addplot [color=myred,solid,line width=0.75pt, mark size=3pt,mark=+, mark options={color=myred}]
  table[row sep=crcr]{
4.0832   0.034068 \\
5.7043   0.0084521 \\
8.7622   0.0011048 \\
16.4893   0.00015153 \\
28.7396   2.4993e-05 \\
50.7883   4.1286e-06 \\
90.6731   7.3439e-07 \\
153.862   1.2595e-07 \\
};

\addplot [color=myred,solid,line width=0.75pt, mark size=3pt,mark=o, mark options={color=myred}]
  table[row sep=crcr]{
4.0832   0.034068 \\
5.7043   0.0084521 \\
8.7622   0.0011048 \\
16.4893   0.00015153 \\
28.7396   2.4993e-05 \\
50.7883   4.1286e-06 \\
90.6731   7.3439e-07 \\
153.862   1.2595e-07 \\
};

\addplot [color=myred,solid,line width=0.75pt, mark size=3pt,mark=x,mark options={color=myred}]
  table[row sep=crcr]{
4.0832   0.034068 \\
5.7043   0.0084521 \\
8.7622   0.0011048 \\
16.4893   0.00015153 \\
28.7396   2.4993e-05 \\
50.7883   4.1286e-06 \\
90.6731   7.3439e-07 \\
153.862   1.2595e-07 \\
};

\addplot [color=myblue,solid,line width=0.75pt, mark size=3pt,mark=+,mark options={color=myblue}]
  table[row sep=crcr]{
4.0832   0.00099012 \\
5.7043   0.0003051 \\
8.7622   5.384e-05 \\
16.4893   8.3116e-06 \\
28.7396   1.5759e-06 \\
50.7883   2.7435e-07 \\
90.6731   4.8961e-08 \\
153.862   8.3706e-09 \\
};

\addplot [color=myblue,solid,line width=0.75pt, mark size=3pt,mark=o,mark options={color=myblue}]
  table[row sep=crcr]{
4.0832   0.00099012 \\
5.7043   0.0003051 \\
8.7622   5.384e-05 \\
16.4893   8.3116e-06 \\
28.7396   1.5759e-06 \\
50.7883   2.7435e-07 \\
90.6731   4.8961e-08 \\
153.862   8.3706e-09 \\
};

\addplot [color=myblue,solid,line width=0.75pt, mark size=3pt,mark=x,mark options={color=myblue}]
  table[row sep=crcr]{
4.0832   0.00099012 \\
5.7043   0.0003051 \\
8.7622   5.384e-05 \\
16.4893   8.3117e-06 \\
28.7396   1.5759e-06 \\
50.7883   2.7437e-07 \\
90.6731   4.8974e-08 \\
153.862   8.3854e-09 \\
};

\addplot [color=mygreen,solid,line width=0.75pt, mark size=3pt,mark=+,mark options={color=mygreen}]
  table[row sep=crcr]{
4.0832   0.00099765 \\
5.7043   0.00030565 \\
8.7622   5.365e-05 \\
16.4893   8.2731e-06 \\
28.7396   1.5698e-06 \\
50.7883   2.7463e-07 \\
90.6731   4.9842e-08 \\
153.862   8.9786e-09 \\
};

\addplot [color=mygreen,solid,line width=0.75pt, mark size=3pt,mark=o,mark options={color=mygreen}]
  table[row sep=crcr]{
4.0832   0.00099765 \\
5.7043   0.00030565 \\
8.7622   5.365e-05 \\
16.4893   8.2731e-06 \\
28.7396   1.5698e-06 \\
50.7883   2.7463e-07 \\
90.6731   4.9842e-08 \\
153.862   8.9787e-09 \\
};

\addplot [color=mygreen,solid,line width=0.75pt, mark size=3pt,mark=x,mark options={color=mygreen}]
  table[row sep=crcr]{
4.0832   0.00099765 \\
5.7043   0.00030565 \\
8.7622   5.365e-05 \\
16.4893   8.2731e-06 \\
28.7396   1.5698e-06 \\
50.7883   2.7462e-07 \\
90.6731   4.9836e-08 \\
153.862   8.968e-09 \\
};

\addplot [color=black,solid,line width=0.5pt]
  table[row sep=crcr]{
 90.6731     1.4658e-06 \\
 153.862     1.4658e-06 \\
 153.862     3e-7 \\
 90.6731     1.4658e-06 \\ 
};
\node[right, align=left, text=black, font=\footnotesize]
at (axis cs:153.862,7e-7) {3}; 

\end{axis}
\end{tikzpicture}
\end{subfigure}
\begin{subfigure}[b]{0.59\textwidth}
\begin{tikzpicture}
\begin{axis}[%
width=5cm,
height=3.8cm,
at={(0\textwidth,0\textwidth)},
scale only axis,
xmode=log,
xmin=3,
xmax=250,
xminorticks=true,
xlabel={\footnotesize $1/h$},
ymode=log,
ymin=3e-5,
ymax=3,
yminorticks=true,
ylabel={\footnotesize $dG$-errors},
legend style={draw=none,fill=none,legend cell align=left},
legend pos=outer north east,
]
\addplot [color=myred,solid,line width=0.75pt, mark size=3pt,mark=+, mark options={color=myred}]
  table[row sep=crcr]{
4.0832   1.8614 \\
5.7043   0.93286 \\
8.7622   0.31197 \\
16.4893   0.097144 \\
28.7396   0.031693 \\
50.7883   0.0099361 \\
90.6731   0.0031823 \\
153.862   0.00098225 \\
};
\addlegendentry{\scriptsize $\mathbf{u}_h$(FHM)}

\addplot [color=myred,solid,line width=0.75pt, mark size=3pt,mark=o, mark options={color=myred}]
  table[row sep=crcr]{
4.0832   1.8614 \\
5.7043   0.93286 \\
8.7622   0.31197 \\
16.4893   0.097144 \\
28.7396   0.031693 \\
50.7883   0.0099361 \\
90.6731   0.0031823 \\
153.862   0.00098225 \\
};
\addlegendentry{\scriptsize $\mathbf{u}_h$(FM-H)}

\addplot [color=myred,solid,line width=0.75pt, mark size=3pt,mark=x, mark options={color=myred}]
  table[row sep=crcr]{
4.0832   1.8614 \\
5.7043   0.93286 \\
8.7622   0.31197 \\
16.4893   0.097144 \\
28.7396   0.031693 \\
50.7883   0.0099361 \\
90.6731   0.0031823 \\
153.862   0.00098225 \\
};
\addlegendentry{\scriptsize $\mathbf{u}_h$(F-H-M)}

\addplot [color=myblue,solid,line width=0.75pt, mark size=3pt,mark=+,mark options={color=myblue}]
  table[row sep=crcr]{
4.0832   0.098042 \\
5.7043   0.050867 \\
8.7622   0.017225 \\
16.4893   0.005231 \\
28.7396   0.0017175 \\
50.7883   0.00053649 \\
90.6731   0.00017144 \\
153.862   5.2942e-05 \\
};
\addlegendentry{\scriptsize $p_h$(FHM)}

\addplot [color=myblue,solid,line width=0.75pt, mark size=3pt,mark=o,mark options={color=myblue}]
  table[row sep=crcr]{
4.0832   0.098042 \\
5.7043   0.050867 \\
8.7622   0.017225 \\
16.4893   0.005231 \\
28.7396   0.0017175 \\
50.7883   0.00053649 \\
90.6731   0.00017144 \\
153.862   5.2942e-05 \\
};
\addlegendentry{\scriptsize $p_h$(FM-H)}

\addplot [color=myblue,solid,line width=0.75pt, mark size=3pt,mark=x,mark options={color=myblue}]
  table[row sep=crcr]{
4.0832   0.098042 \\
5.7043   0.050867 \\
8.7622   0.017225 \\
16.4893   0.005231 \\
28.7396   0.0017175 \\
50.7883   0.00053649 \\
90.6731   0.00017144 \\
153.862   5.2942e-05 \\
};
\addlegendentry{\scriptsize $p_h$(F-H-M)}

\addplot [color=mygreen,solid,line width=0.75pt, mark size=3pt, mark=+, mark options={color=mygreen}]
  table[row sep=crcr]{
4.0832   0.098061 \\
5.7043   0.050869 \\
8.7622   0.017225 \\
16.4893   0.0052309 \\
28.7396   0.0017175 \\
50.7883   0.00053648 \\
90.6731   0.00017144 \\
153.862   5.2941e-05 \\
};
\addlegendentry{\scriptsize $T_h$(FHM)}

\addplot [color=mygreen,solid,line width=0.75pt, mark size=3pt, mark=o, mark options={color=mygreen}]
  table[row sep=crcr]{
4.0832   0.098061 \\
5.7043   0.050869 \\
8.7622   0.017225 \\
16.4893   0.0052309 \\
28.7396   0.0017175 \\
50.7883   0.00053648 \\
90.6731   0.00017144 \\
153.862   5.2941e-05 \\
};
\addlegendentry{\scriptsize $T_h$(FM-H)}

\addplot [color=mygreen,solid,line width=0.75pt, mark size=3pt, mark=x, mark options={color=mygreen}]
  table[row sep=crcr]{
4.0832   0.098061 \\
5.7043   0.050869 \\
8.7622   0.017225 \\
16.4893   0.0052309 \\
28.7396   0.0017175 \\
50.7883   0.00053648 \\
90.6731   0.00017144 \\
153.862   5.2941e-05 \\
};
\addlegendentry{\scriptsize $T_h$(F-H-M)}

\addplot [color=black,solid,line width=0.5pt]
  table[row sep=crcr]{
 90.6731     0.0058 \\
 153.862     0.0058 \\
 153.862     2e-3 \\
 90.6731     0.0058 \\ 
};
\node[right, align=left, text=black, font=\footnotesize]
at (axis cs:153.862,0.0035) {2}; 

\end{axis}
\end{tikzpicture}
\end{subfigure}
\end{subfigure}

\caption{Convergence test vs $h$ in 2D: computed errors in $L^2$-norm (left column) and $dG$-norms (right column) versus $1/h$ (\textit{log-log} scale).}
\label{fig:ConvH_2D}

\end{figure}
\end{center}

%% file: Fig/ConvP.tikz
\begin{center}
\begin{figure}[ht]
\begin{subfigure}[b]{1\textwidth}
\begin{subfigure}[b]{0.4\textwidth}
\begin{tikzpicture}
\begin{axis}[%
width=5cm,
height=3.8cm,
at={(0\textwidth,0\textwidth)},
scale only axis,
xmin=0.001,
xmax=9,
xminorticks=true,
xlabel={\footnotesize $1/h$},
ymode=log,
ymin=1e-13,
ymax=1,
yminorticks=true,
ylabel={\footnotesize $L^2$-errors},
legend style={draw=none,fill=none,legend cell align=left},
legend pos=outer north east,
]
\addplot [color=myred,solid,line width=0.75pt, mark size=3pt,mark=+, mark options={color=myred}]
  table[row sep=crcr]{
1   0.46987 \\
2   0.032069 \\
3   0.001775 \\
4   0.00012403 \\
5   8.7659e-06 \\
6   4.6169e-07 \\
7   2.3202e-08 \\
8   9.8371e-10 \\
};

\addplot [color=myred,solid,line width=0.75pt, mark size=3pt,mark=o, mark options={color=myred}]
  table[row sep=crcr]{
1   0.46987 \\
2   0.032069 \\
3   0.001775 \\
4   0.00012403 \\
5   8.7659e-06 \\
6   4.6169e-07 \\
7   2.3202e-08 \\
8   9.8372e-10 \\
};

\addplot [color=myred,solid,line width=0.75pt, mark size=3pt,mark=x,mark options={color=myred}]
  table[row sep=crcr]{
1   0.46987 \\
2   0.032069 \\
3   0.001775 \\
4   0.00012403 \\
5   8.7659e-06 \\
6   4.6169e-07 \\
7   2.3202e-08 \\
8   9.8371e-10 \\
};

\addplot [color=myblue,solid,line width=0.75pt, mark size=3pt,mark=+,mark options={color=myblue}]
  table[row sep=crcr]{
1   0.068464 \\
2   0.00098503 \\
3   6.3894e-05 \\
4   2.0174e-06 \\
5   8.4352e-08 \\
6   1.785e-09 \\
7   5.9571e-11 \\
8   1.1552e-12 \\
};

\addplot [color=myblue,solid,line width=0.75pt, mark size=3pt,mark=o,mark options={color=myblue}]
  table[row sep=crcr]{
1   0.068464 \\
2   0.00098503 \\
3   6.3894e-05 \\
4   2.0174e-06 \\
5   8.4352e-08 \\
6   1.785e-09 \\
7   5.9575e-11 \\
8   1.3747e-12 \\
};

\addplot [color=myblue,solid,line width=0.75pt, mark size=3pt,mark=x,mark options={color=myblue}]
  table[row sep=crcr]{
1   0.068464 \\
2   0.00098503 \\
3   6.3894e-05 \\
4   2.0174e-06 \\
5   8.4352e-08 \\
6   1.785e-09 \\
7   6.294e-11 \\
8   2.1538e-11 \\
};

\addplot [color=mygreen,solid,line width=0.75pt, mark size=3pt,mark=+,mark options={color=mygreen}]
  table[row sep=crcr]{
1   0.072162 \\
2   0.00099261 \\
3   6.3906e-05 \\
4   2.018e-06 \\
5   8.4361e-08 \\
6   1.7849e-09 \\
7   5.9573e-11 \\
8   1.0198e-12 \\
};

\addplot [color=mygreen,solid,line width=0.75pt, mark size=3pt,mark=o,mark options={color=mygreen}]
  table[row sep=crcr]{
1   0.072162 \\
2   0.00099261 \\
3   6.3906e-05 \\
4   2.018e-06 \\
5   8.4361e-08 \\
6   1.7849e-09 \\
7   5.9571e-11 \\
8   1.0493e-12 \\
};

\addplot [color=mygreen,solid,line width=0.75pt, mark size=3pt,mark=x,mark options={color=mygreen}]
  table[row sep=crcr]{
1   0.072162 \\
2   0.00099261 \\
3   6.3906e-05 \\
4   2.018e-06 \\
5   8.4361e-08 \\
6   1.7849e-09 \\
7   6.0003e-11 \\
8   7.4501e-12 \\
};

\addplot [color=black,dashed,line width=0.75pt]
  table[row sep=crcr]{
1   0.024894 \\
2   0.0012394 \\
3   6.1705e-05 \\
4   3.0721e-06 \\
5   1.5295e-07 \\
6   7.615e-09 \\
7   3.7913e-10 \\
8   1.8876e-11 \\
};

\end{axis}
\end{tikzpicture}
\end{subfigure}
\begin{subfigure}[b]{0.59\textwidth}
\begin{tikzpicture}
\begin{axis}[%
width=5cm,
height=3.8cm,
at={(0\textwidth,0\textwidth)},
scale only axis,
xmin=0.001,
xmax=9,
xminorticks=true,
xlabel={\footnotesize $1/h$},
ymode=log,
ymin=3e-11,
ymax=6,
yminorticks=true,
ylabel={\footnotesize Energy-errors},
legend style={draw=none,fill=none,legend cell align=left},
legend pos=outer north east,
]
\addplot [color=myred,solid,line width=0.75pt, mark size=3pt,mark=+, mark options={color=myred}]
  table[row sep=crcr]{
1   5.4236 \\
2   1.9045 \\
3   0.32648 \\
4   0.041624 \\
5   0.0038393 \\
6   0.00028836 \\
7   1.8951e-05 \\
8   1.0282e-06 \\
};
\addlegendentry{\scriptsize $\mathbf{u}_h$(FHM)}

\addplot [color=myred,solid,line width=0.75pt, mark size=3pt,mark=o, mark options={color=myred}]
  table[row sep=crcr]{
1   5.4236 \\
2   1.9045 \\
3   0.32648 \\
4   0.041624 \\
5   0.0038393 \\
6   0.00028836 \\
7   1.8951e-05 \\
8   1.0282e-06 \\
};
\addlegendentry{\scriptsize $\mathbf{u}_h$(FM-H)}

\addplot [color=myred,solid,line width=0.75pt, mark size=3pt,mark=x, mark options={color=myred}]
  table[row sep=crcr]{
1   5.4236 \\
2   1.9045 \\
3   0.32648 \\
4   0.041624 \\
5   0.0038393 \\
6   0.00028836 \\
7   1.8951e-05 \\
8   1.0282e-06 \\
};
\addlegendentry{\scriptsize $\mathbf{u}_h$(F-H-M)}

\addplot [color=myblue,solid,line width=0.75pt, mark size=3pt,mark=+,mark options={color=myblue}]
  table[row sep=crcr]{
1   0.39487 \\
2   0.09804 \\
3   0.0097089 \\
4   0.00058358 \\
5   2.9751e-05 \\
6   9.9303e-07 \\
7   3.8304e-08 \\
8   8.6709e-10 \\
};
\addlegendentry{\scriptsize $p_h$(FHM)}

\addplot [color=myblue,solid,line width=0.75pt, mark size=3pt,mark=o,mark options={color=myblue}]
  table[row sep=crcr]{
1   0.39487 \\
2   0.09804 \\
3   0.0097089 \\
4   0.00058358 \\
5   2.9751e-05 \\
6   9.9303e-07 \\
7   3.8304e-08 \\
8   8.6695e-10 \\
};
\addlegendentry{\scriptsize $p_h$(FM-H)}

\addplot [color=myblue,solid,line width=0.75pt, mark size=3pt,mark=x,mark options={color=myblue}]
  table[row sep=crcr]{
1   0.39487 \\
2   0.09804 \\
3   0.0097089 \\
4   0.00058358 \\
5   2.9751e-05 \\
6   9.9303e-07 \\
7   3.8305e-08 \\
8   8.7181e-10 \\
};
\addlegendentry{\scriptsize $p_h$(F-H-M)}

\addplot [color=mygreen,solid,line width=0.75pt, mark size=3pt,mark=+,mark options={color=mygreen}]
  table[row sep=crcr]{
1   0.40988 \\
2   0.098059 \\
3   0.0097096 \\
4   0.00058359 \\
5   2.9752e-05 \\
6   9.9303e-07 \\
7   3.8305e-08 \\
8   8.6669e-10 \\
};
\addlegendentry{\scriptsize $T_h$(FHM)}

\addplot [color=mygreen,solid,line width=0.75pt, mark size=3pt,mark=o,mark options={color=mygreen}]
  table[row sep=crcr]{
1   0.40988 \\
2   0.098059 \\
3   0.0097096 \\
4   0.00058359 \\
5   2.9752e-05 \\
6   9.9303e-07 \\
7   3.8305e-08 \\
8   8.6723e-10 \\
};
\addlegendentry{\scriptsize $T_h$(FM-H)}

\addplot [color=mygreen,solid,line width=0.75pt, mark size=3pt,mark=x,mark options={color=mygreen}]
  table[row sep=crcr]{
1   0.40988 \\
2   0.098059 \\
3   0.0097096 \\
4   0.00058359 \\
5   2.9752e-05 \\
6   9.9303e-07 \\
7   3.8305e-08 \\
8   8.6741e-10 \\
};
\addlegendentry{\scriptsize $T_h$(F-H-M)}

\addplot [color=black,dashed,line width=0.75pt]
  table[row sep=crcr]{
1   0.049787 \\
2   0.0024788 \\
3   0.00012341 \\
4   6.1442e-06 \\
5   3.059e-07 \\
6   1.523e-08 \\
7   7.5826e-10 \\
8   3.7751e-11 \\
};
\addlegendentry{$\exp{(-3\ell)}$}

\end{axis}
\end{tikzpicture}
\end{subfigure}
\end{subfigure}

\caption{Convergence test vs $\ell$ in 2D: computed errors in $L^2$-norm (left column) and $dG$-norms (right column) versus $1/h$ (\textit{log-log} scale).}
\label{fig:ConvP}

\end{figure}
\end{center}

%% file: Fig/ConvH_3D.tikz
\begin{center}
\begin{figure}[ht]
\begin{subfigure}[b]{1\textwidth}
\begin{subfigure}[b]{0.4\textwidth}
\begin{tikzpicture}
\begin{axis}[%
width=5cm,
height=3.8cm,
at={(0\textwidth,0\textwidth)},
scale only axis,
xmode=log,
xmin=1,
xmax=23,
xminorticks=true,
xlabel={\footnotesize $1/h$},
ymode=log,
ymin=5e-4,
ymax=1.1,
yminorticks=true,
ylabel={\footnotesize $L^2$-errors},
legend style={draw=none,fill=none,legend cell align=left},
legend pos=outer north east,
]

\addplot [color=myred,solid,line width=0.75pt, mark size=3pt,mark=+, mark options={color=myred}]
  table[row sep=crcr]{
1.1547   0.93708435 \\
3.4641   0.27128733 \\
5.7735   0.1223474  \\
8.0829   0.06802111 \\
10.3923  0.04290072 \\
12.7017  0.0294019 \\
15.0111  0.02136218 \\
17.3205  0.01620398\\
};

\addplot [color=myred,solid,line width=0.75pt, mark size=3pt,mark=o, mark options={color=myred}]
  table[row sep=crcr]{
1.1547   0.93708435  \\
3.4641   0.27128738 \\
5.7735   0.12234739  \\
8.0829   0.06802114 \\
10.3923   0.04290076  \\
12.7017  0.02940185 \\
15.0111  0.021362 \\
17.3205  0.01620392\\
};

\addplot [color=myred,solid,line width=0.75pt, mark size=3pt,mark=x,mark options={color=myred}]
  table[row sep=crcr]{
1.1547  0.9370835   \\
3.4641  0.27128815  \\
5.7735  0.12234743   \\
8.0829  0.06802114 \\
10.3923 0.04290083   \\
12.7017 0.02940189 \\
15.0111 0.02136214  \\
17.3205 0.01620394\\
};

\addplot [color=myblue,solid,line width=0.75pt, mark size=3pt,mark=+,mark options={color=myblue}]
  table[row sep=crcr]{
1.1547  0.07785625  \\
3.4641 0.01556733  \\
5.7735 0.00638695\\
8.0829  0.00343067   \\
10.3923 0.00213139  \\
12.7017 0.00144995 \\
15.0111 0.00104915  \\
17.3205 0.00079393\\
};

\addplot [color=myblue,solid,line width=0.75pt, mark size=3pt,mark=o,mark options={color=myblue}]
  table[row sep=crcr]{
1.1547  0.07785133  \\
3.4641  0.01556743  \\
5.7735  0.00638695  \\
8.0829  0.00343067  \\
10.3923 0.00213139  \\
12.7017 0.00144995 \\
15.0111 0.00104977  \\
17.3205 0.00079428 \\
};

\addplot [color=myblue,solid,line width=0.75pt, mark size=3pt,mark=x,mark options={color=myblue}]
  table[row sep=crcr]{
1.1547  0.0778509  \\
3.4641  0.01556825   \\
5.7735  0.006387  \\
8.0829   0.00343063    \\
10.3923  0.0021313  \\
12.7017 0.00144979 \\
15.0111 0.00104866  \\
17.3205  0.00079337\\
};

\addplot [color=mygreen,solid,line width=0.75pt, mark size=3pt,mark=+,mark options={color=mygreen}]
  table[row sep=crcr]{
1.1547  0.07959291  \\
3.4641  0.01630031  \\
5.7735  0.00669192  \\
8.0829  0.00359277  \\
10.3923 0.00223107   \\
12.7017 0.00151721 \\
15.0111  0.00109744  \\
17.3205 0.00083044 \\
};

\addplot [color=mygreen,solid,line width=0.75pt, mark size=3pt,mark=o,mark options={color=mygreen}]
  table[row sep=crcr]{
1.1547  0.07958735 \\
3.4641  0.01630078   \\
5.7735  0.00669204   \\
8.0829  0.00359268  \\
10.3923 0.00223077  \\
12.7017 0.00151571 \\
15.0111 0.00109589  \\
17.3205 0.00082783 \\
};

\addplot [color=mygreen,solid,line width=0.75pt, mark size=3pt,mark=x,mark options={color=mygreen}]
  table[row sep=crcr]{
1.1547  0.07958729  \\
3.4641  0.0163008 \\
5.7735   0.00669203  \\
8.0829   0.00359268  \\
10.3923 0.00223079   \\
12.7017 0.00151573 \\
15.0111 0.00109586  \\
17.3205 0.0008278 \\
};

\addplot [color=black,solid,line width=0.5pt]
  table[row sep=crcr]{
 12.7017     0.0024 \\
 17.3205     0.0024 \\
 17.3205     0.0013 \\
 12.7017     0.0024 \\ 
};
\node[right, align=left, text=black, font=\footnotesize]
at (axis cs:17.3205,0.0018) {2}; 

\end{axis}
\end{tikzpicture}
\end{subfigure}
\begin{subfigure}[b]{0.59\textwidth}
\begin{tikzpicture}
\begin{axis}[%
width=5cm,
height=3.8cm,
at={(0\textwidth,0\textwidth)},
scale only axis,
xmode=log,
xmin=1,
xmax=23,
xminorticks=true,
xlabel={\footnotesize $1/h$},
ymode=log,
ymin=0.08,
ymax=15,
yminorticks=true,
ylabel={\footnotesize $dG$-errors},
legend style={draw=none,fill=none,legend cell align=left},
legend pos=outer north east,
]
\addplot [color=myred,solid,line width=0.75pt, mark size=3pt,mark=+, mark options={color=myred}]
  table[row sep=crcr]{
1.1547  13.11040634   \\
3.4641  7.44614904  \\
5.7735  4.74950903    \\
8.0829  3.44430297    \\
10.3923 2.69394752   \\
12.7017 2.21028575 \\
15.0111 1.87352041  \\
17.3205 1.62586148 \\
};
\addlegendentry{\scriptsize $\mathbf{u}_h$(FHM)}

\addplot [color=myred,solid,line width=0.75pt, mark size=3pt,mark=o, mark options={color=myred}]
  table[row sep=crcr]{
1.1547  13.11040895  \\
3.4641  7.44614871  \\
5.7735   4.74950906   \\
8.0829  3.44430295    \\
10.3923 2.69394753    \\
12.7017 2.21028577 \\
15.0111 1.87352041   \\
17.3205  1.62586148 \\
};
\addlegendentry{\scriptsize $\mathbf{u}_h$(FM-H)}

\addplot [color=myred,solid,line width=0.75pt, mark size=3pt,mark=x, mark options={color=myred}]
  table[row sep=crcr]{
1.1547  13.1104104   \\
3.4641  7.44614807 \\
5.7735  4.74950901    \\
8.0829   3.44430293  \\
10.3923  2.69394749\\
12.7017   2.21028575 \\
15.0111  1.8735204   \\
17.3205  1.62586146\\
};
\addlegendentry{\scriptsize $\mathbf{u}_h$(F-H-M)}

\addplot [color=myblue,solid,line width=0.75pt, mark size=3pt,mark=+,mark options={color=myblue}]
  table[row sep=crcr]{
1.1547  1.24187574  \\
3.4641 0.52034241 \\
5.7735  0.32042927\\
8.0829  0.23068902 \\
10.3923  0.18007404\\
12.7017 0.14763581 \\
15.0111  0.12508774 \\
17.3205 0.10850947  \\
};
\addlegendentry{\scriptsize $p_h$(FHM)}

\addplot [color=myblue,solid,line width=0.75pt, mark size=3pt,mark=o,mark options={color=myblue}]
  table[row sep=crcr]{
1.1547  1.24187542  \\
3.4641 0.52034241 \\
5.7735 0.32042927 \\
8.0829  0.23068902 \\
10.3923 0.18007404    \\
12.7017 0.14763581 \\
15.0111  0.12508778 \\
17.3205  0.10850949 \\
};
\addlegendentry{\scriptsize $p_h$(FM-H)}

\addplot [color=myblue,solid,line width=0.75pt, mark size=3pt,mark=x,mark options={color=myblue}]
  table[row sep=crcr]{
1.1547  1.24187535  \\
3.4641  0.52034267  \\
5.7735  0.32042945\\
8.0829   0.23068902  \\
10.3923  0.18007397  \\
12.7017 0.14763577 \\
15.0111  0.12508763   \\
17.3205 0.10850918 \\
};
\addlegendentry{\scriptsize $p_h$(F-H-M)}

\addplot [color=mygreen,solid,line width=0.75pt, mark size=3pt, mark=+, mark options={color=mygreen}]
  table[row sep=crcr]{
1.1547  1.24203201  \\
3.4641  0.52025329  \\
5.7735  0.3204051  \\
8.0829   0.23067878 \\
10.3923  0.18006841 \\
12.7017   0.14763221 \\
15.0111  0.1250852   \\
17.3205 0.10850758  \\
};
\addlegendentry{\scriptsize $T_h$(FHM)}

\addplot [color=mygreen,solid,line width=0.75pt, mark size=3pt, mark=o, mark options={color=mygreen}]
  table[row sep=crcr]{
1.1547  1.24203116  \\
3.4641 0.52025373 \\
5.7735  0.32040529    \\
8.0829 0.23067864 \\
10.3923 0.18006818  \\
12.7017 0.14763187\\
15.0111  0.12508504 \\
17.3205  0.10850736  \\
};
\addlegendentry{\scriptsize $T_h$(FM-H)}

\addplot [color=mygreen,solid,line width=0.75pt, mark size=3pt, mark=x, mark options={color=mygreen}]
  table[row sep=crcr]{
1.1547  1.24203118  \\
3.4641 0.52025374  \\
5.7735 0.32040529   \\
8.0829  0.23067864  \\
10.3923 0.18006818  \\
12.7017 0.14763187\\
15.0111  0.12508503  \\
17.3205 0.10850736 \\
};
\addlegendentry{\scriptsize $T_h$(F-H-M)}

\addplot [color=black,solid,line width=0.5pt]
  table[row sep=crcr]{
 12.7017     0.2045 \\
 17.3205     0.2045 \\
 17.3205     0.15 \\
 12.7017     0.2045 \\ 
};
\node[right, align=left, text=black, font=\footnotesize]
at (axis cs:17.3205,0.17) {1}; 

\end{axis}
\end{tikzpicture}
\end{subfigure}
\end{subfigure}

\caption{Convergence test vs $h$ in 3D: computed errors in $L^2$-norm (left column) and $dG$-norms (right column) versus $1/h$ (\textit{log-log} scale).}
\label{fig:ConvH_3D}

\end{figure}
\end{center}

%% file: Fig/Rob.tikz
\begin{center}
\begin{figure}[h!]
\begin{tabular}{c c c}
\hspace{1.5cm} \large\textbf{dG-Errors u} & \hspace{2.5cm}
\large\textbf{dG-Errors p} & \hspace{2.5cm}
\large\textbf{dG-Errors T} \\
\end{tabular}

\begin{subfigure}[b]{1\textwidth}
\begin{subfigure}[b]{0.32\textwidth}
\begin{tikzpicture}
\begin{axis}[%
width=4.3cm,
height=3.8cm,
at={(0\textwidth,0\textwidth)},
scale only axis,
xmode=log,
xmin=3,
xmax=250,
xminorticks=true,
ymode=log,
ymin=5e-4,
ymax=3,
yminorticks=true,
legend style={draw=none,fill=none,legend cell align=left},
legend pos=south west,
]
\addplot [color=myred,solid,line width=0.75pt, mark size=3pt,mark=+, mark options={color=myred}]
  table[row sep=crcr]{
4.0832   1.8576 \\
5.7043   0.93187 \\
8.7622   0.31183 \\
16.4893   0.097131 \\
28.7396   0.031688 \\
50.7883   0.009935 \\
90.6731   0.0031821 \\
153.862   0.00098219 \\
};
\addlegendentry{\scriptsize \textbf{FHM}}

\addplot [color=myblue,solid,line width=0.75pt, mark size=3pt,mark=o,mark options={color=myblue}]
  table[row sep=crcr]{
4.0832   1.8576 \\
5.7043   0.93187 \\
8.7622   0.31183 \\
16.4893   0.097131 \\
28.7396   0.031688 \\
50.7883   0.009935 \\
90.6731   0.0031821 \\
153.862   0.00098219 \\
};
\addlegendentry{\scriptsize \textbf{FM-H}}

\addplot [color=mygreen,solid,line width=0.75pt, mark size=3pt,mark=x,mark options={color=mygreen}]
  table[row sep=crcr]{
4.0832   1.8576 \\
5.7043   0.93187 \\
8.7622   0.31183 \\
16.4893   0.097131 \\
28.7396   0.031688 \\
50.7883   0.009935 \\
90.6731   0.0031821 \\
153.862   0.00098219 \\
};
\addlegendentry{\scriptsize \textbf{F-H-M}}

\addplot [color=black,solid,line width=0.5pt]
  table[row sep=crcr]{
 90.6731     0.0058 \\
 153.862     0.0058 \\
 153.862     2e-3 \\
 90.6731     0.0058 \\ 
};
\node[right, align=left, text=black, font=\footnotesize]
at (axis cs:153.862,0.0035) {2}; 

\node[anchor=north west] at (rel axis cs:0.75,0.98) {\textbf{(i)}};

\end{axis}
\end{tikzpicture}
\end{subfigure}
\begin{subfigure}[b]{0.32\textwidth}
\begin{tikzpicture}
\begin{axis}[%
width=4.3cm,
height=3.8cm,
at={(0\textwidth,0\textwidth)},
scale only axis,
xmode=log,
xmin=3,
xmax=250,
xminorticks=true,
ymode=log,
ymin=3e-5,
ymax=0.2,
yminorticks=true,
legend style={draw=none,fill=none,legend cell align=left},
]

\addplot [color=myred,solid,line width=0.75pt, mark size=3pt,mark=+, mark options={color=myred}]
  table[row sep=crcr]{
4.0832   0.098048 \\
5.7043   0.050867 \\
8.7622   0.017225 \\
16.4893   0.0052311 \\
28.7396   0.0017176 \\
50.7883   0.00053649 \\
90.6731   0.00017144 \\
153.862   5.2942e-05 \\
};

\addplot [color=myblue,solid,line width=0.75pt, mark size=3pt,mark=o,mark options={color=myblue}]
  table[row sep=crcr]{
4.0832   0.098048 \\
5.7043   0.050867 \\
8.7622   0.017225 \\
16.4893   0.0052311 \\
28.7396   0.0017176 \\
50.7883   0.00053649 \\
90.6731   0.00017144 \\
153.862   5.2942e-05 \\
};

\addplot [color=mygreen,solid,line width=0.75pt, mark size=3pt,mark=x,mark options={color=mygreen}]
  table[row sep=crcr]{
4.0832   0.098048 \\
5.7043   0.050867 \\
8.7622   0.017225 \\
16.4893   0.0052311 \\
28.7396   0.0017176 \\
50.7883   0.00053649 \\
90.6731   0.00017144 \\
153.862   5.2942e-05 \\
};

\addplot [color=black,solid,line width=0.5pt]
  table[row sep=crcr]{
 90.6731     2.5915e-04 \\
 153.862     2.5915e-04 \\
 153.862     9e-5 \\
 90.6731     2.5915e-04 \\ 
};
\node[right, align=left, text=black, font=\footnotesize]
at (axis cs:153.862,1.7e-4) {2};  

\node[anchor=north west] at (rel axis cs:0.75,0.98) {\textbf{(i)}};

\end{axis}
\end{tikzpicture}
\end{subfigure}
\begin{subfigure}[b]{0.32\textwidth}
\begin{tikzpicture}
\begin{axis}[%
width=4.3cm,
height=3.8cm,
at={(0\textwidth,0\textwidth)},
scale only axis,
xmode=log,
xmin=3,
xmax=250,
xminorticks=true,
ymode=log,
ymin=3e-5,
ymax=0.2,
yminorticks=true,
legend style={draw=none,fill=none,legend cell align=left},
]

\addplot [color=myred,solid,line width=0.75pt, mark size=3pt,mark=+, mark options={color=myred}]
  table[row sep=crcr]{
4.0832   0.098065 \\
5.7043   0.05087 \\
8.7622   0.017225 \\
16.4893   0.0052309 \\
28.7396   0.0017175 \\
50.7883   0.00053648 \\
90.6731   0.00017143 \\
153.862   5.2941e-05 \\
};

\addplot [color=myblue,solid,line width=0.75pt, mark size=3pt,mark=o,mark options={color=myblue}]
  table[row sep=crcr]{
4.0832   0.098065 \\
5.7043   0.05087 \\
8.7622   0.017225 \\
16.4893   0.0052309 \\
28.7396   0.0017175 \\
50.7883   0.00053648 \\
90.6731   0.00017143 \\
153.862   5.2941e-05 \\
};

\addplot [color=mygreen,solid,line width=0.75pt, mark size=3pt,mark=x,mark options={color=mygreen}]
  table[row sep=crcr]{
4.0832   0.098065 \\
5.7043   0.05087 \\
8.7622   0.017225 \\
16.4893   0.0052309 \\
28.7396   0.0017175 \\
50.7883   0.00053648 \\
90.6731   0.00017143 \\
153.862   5.2941e-05 \\
};

\addplot [color=black,solid,line width=0.5pt]
  table[row sep=crcr]{
 90.6731     2.5915e-04 \\
 153.862     2.5915e-04 \\
 153.862     9e-5 \\
 90.6731     2.5915e-04 \\ 
};
\node[right, align=left, text=black, font=\footnotesize]
at (axis cs:153.862,1.7e-4) {2};  

\node[anchor=north west] at (rel axis cs:0.75,0.98) {\textbf{(i)}};

\end{axis}
\end{tikzpicture}
\end{subfigure}
\end{subfigure}
\vspace{-0.1cm}

\begin{subfigure}[b]{1\textwidth}
\begin{subfigure}[b]{0.32\textwidth}
\begin{tikzpicture}
\begin{axis}[%
width=4.3cm,
height=3.8cm,
at={(0\textwidth,0\textwidth)},
scale only axis,
xmode=log,
xmin=3,
xmax=250,
xminorticks=true,
ymode=log,
ymin=5e-4,
ymax=3,
yminorticks=true,
legend style={draw=none,fill=none,legend cell align=left},
legend pos=outer north east,
]

\addplot [color=myred,solid,line width=0.75pt, mark size=3pt,mark=+, mark options={color=myred}]
  table[row sep=crcr]{
4.0832   1.8614 \\
5.7043   0.93286 \\
8.7622   0.31197 \\
16.4893   0.097144 \\
28.7396   0.031693 \\
50.7883   0.0099361 \\
90.6731   0.0031823 \\
153.862   0.00098225 \\
};

\addplot [color=myblue,solid,line width=0.75pt, mark size=3pt,mark=o,mark options={color=myblue}]
  table[row sep=crcr]{
4.0832   1.8614 \\
5.7043   0.93286 \\
8.7622   0.31197 \\
16.4893   0.097144 \\
28.7396   0.031693 \\
50.7883   0.0099361 \\
90.6731   0.0031823 \\
153.862   0.00098225 \\
};

\addplot [color=mygreen,solid,line width=0.75pt, mark size=3pt,mark=x,mark options={color=mygreen}]
  table[row sep=crcr]{
4.0832   1.8614 \\
5.7043   0.93286 \\
8.7622   0.31197 \\
16.4893   0.097144 \\
28.7396   0.031693 \\
50.7883   0.0099361 \\
90.6731   0.0031823 \\
153.862   0.00098225 \\
};

\addplot [color=black,solid,line width=0.5pt]
  table[row sep=crcr]{
 90.6731     0.0058 \\
 153.862     0.0058 \\
 153.862     2e-3 \\
 90.6731     0.0058 \\ 
};
\node[right, align=left, text=black, font=\footnotesize]
at (axis cs:153.862,0.004) {2};  

\node[anchor=north west] at (rel axis cs:0.75,0.98) {\textbf{(ii)}};

\end{axis}
\end{tikzpicture}
\end{subfigure}
\begin{subfigure}[b]{0.32\textwidth}
\begin{tikzpicture}
\begin{axis}[%
width=4.3cm,
height=3.8cm,
at={(0\textwidth,0\textwidth)},
scale only axis,
xmode=log,
xmin=3,
xmax=250,
xminorticks=true,
ymode=log,
ymin=2e-8,
ymax=7e-5,
yminorticks=true,
legend style={draw=none,fill=none,legend cell align=left},
]

\addplot [color=myred,solid,line width=0.75pt, mark size=3pt,mark=+, mark options={color=myred}]
  table[row sep=crcr]{
4.0832   4.724e-05 \\
5.7043   2.4202e-05 \\
8.7622   8.0241e-06 \\
16.4893   2.3022e-06 \\
28.7396   6.9805e-07 \\
50.7883   2.148e-07 \\
90.6731   9.2321e-08 \\
153.862   3.6491e-08 \\
};

\addplot [color=myblue,solid,line width=0.75pt, mark size=3pt,mark=o,mark options={color=myblue}]
  table[row sep=crcr]{
4.0832   4.724e-05 \\
5.7043   2.4202e-05 \\
8.7622   8.0241e-06 \\
16.4893   2.3022e-06 \\
28.7396   6.9805e-07 \\
50.7883   2.148e-07 \\
90.6731   9.2321e-08 \\
153.862   3.6491e-08 \\
};

\addplot [color=mygreen,solid,line width=0.75pt, mark size=3pt,mark=x,mark options={color=mygreen}]
  table[row sep=crcr]{
4.0832   4.724e-05 \\
5.7043   2.4202e-05 \\
8.7622   8.0241e-06 \\
16.4893   2.3022e-06 \\
28.7396   6.9805e-07 \\
50.7883   2.148e-07 \\
90.6731   9.2321e-08 \\
153.862   3.6491e-08 \\
};

\addplot [color=black,solid,line width=0.5pt]
  table[row sep=crcr]{
90.6731      1.7277e-07 \\
 153.862     1.7277e-07 \\
 153.862     6e-8 \\
 90.6731     1.7277e-07 \\ 
};
\node[right, align=left, text=black, font=\footnotesize]
at (axis cs:153.862,1e-7) {2};  

\node[anchor=north west] at (rel axis cs:0.75,0.98) {\textbf{(ii)}};

\end{axis}
\end{tikzpicture}
\end{subfigure}
\begin{subfigure}[b]{0.32\textwidth}
\begin{tikzpicture}
\begin{axis}[%
width=4.3cm,
height=3.8cm,
at={(0\textwidth,0\textwidth)},
scale only axis,
xmode=log,
xmin=3,
xmax=250,
xminorticks=true,
ymode=log,
ymin=3e-5,
ymax=0.2,
yminorticks=true,
legend style={draw=none,fill=none,legend cell align=left},
]

\addplot [color=myred,solid,line width=0.75pt, mark size=3pt,mark=+, mark options={color=myred}]
  table[row sep=crcr]{
4.0832   0.098043 \\
5.7043   0.050867 \\
8.7622   0.017225 \\
16.4893   0.0052311 \\
28.7396   0.0017175 \\
50.7883   0.00053649 \\
90.6731   0.00017144 \\
153.862   5.2942e-05 \\
};

\addplot [color=myblue,solid,line width=0.75pt, mark size=3pt,mark=o,mark options={color=myblue}]
  table[row sep=crcr]{
4.0832   0.098043 \\
5.7043   0.050867 \\
8.7622   0.017225 \\
16.4893   0.0052311 \\
28.7396   0.0017175 \\
50.7883   0.00053649 \\
90.6731   0.00017144 \\
153.862   5.2942e-05 \\
};

\addplot [color=mygreen,solid,line width=0.75pt, mark size=3pt,mark=x,mark options={color=mygreen}]
  table[row sep=crcr]{
4.0832   0.098043 \\
5.7043   0.050867 \\
8.7622   0.017225 \\
16.4893   0.0052311 \\
28.7396   0.0017175 \\
50.7883   0.00053649 \\
90.6731   0.00017144 \\
153.862   5.2942e-05 \\
};

\addplot [color=black,solid,line width=0.5pt]
  table[row sep=crcr]{
 90.6731     2.5915e-04 \\
 153.862     2.5915e-04 \\
 153.862     9e-5 \\
 90.6731     2.5915e-04 \\ 
};
\node[right, align=left, text=black, font=\footnotesize]
at (axis cs:153.862,1.7e-4) {2};   

\node[anchor=north west] at (rel axis cs:0.75,0.98) {\textbf{(ii)}};

\end{axis}
\end{tikzpicture}
\end{subfigure}
\end{subfigure}
\vspace{-0.1cm}

\begin{subfigure}[b]{1\textwidth}
\begin{subfigure}[b]{0.32\textwidth}
\begin{tikzpicture}
\begin{axis}[%
width=4.3cm,
height=3.8cm,
at={(0\textwidth,0\textwidth)},
scale only axis,
xmode=log,
xmin=3,
xmax=250,
xminorticks=true,
ymode=log,
ymin=5e-4,
ymax=3,
yminorticks=true,
legend style={draw=none,fill=none,legend cell align=left},
legend pos=outer north east,
]

\addplot [color=myred,solid,line width=0.75pt, mark size=3pt,mark=+, mark options={color=myred}]
  table[row sep=crcr]{
4.0832   1.8614 \\
5.7043   0.93286 \\
8.7622   0.31197 \\
16.4893   0.097144 \\
28.7396   0.031693 \\
50.7883   0.0099361 \\
90.6731   0.0031823 \\
153.862   0.00098225 \\
};

\addplot [color=myblue,solid,line width=0.75pt, mark size=3pt,mark=o,mark options={color=myblue}]
  table[row sep=crcr]{
4.0832   1.8614 \\
5.7043   0.93286 \\
8.7622   0.31197 \\
16.4893   0.097144 \\
28.7396   0.031693 \\
50.7883   0.0099361 \\
90.6731   0.0031823 \\
153.862   0.00098225 \\
};

\addplot [color=mygreen,solid,line width=0.75pt, mark size=3pt,mark=x,mark options={color=mygreen}]
  table[row sep=crcr]{
4.0832   1.8614 \\
5.7043   0.93286 \\
8.7622   0.31197 \\
16.4893   0.097144 \\
28.7396   0.031693 \\
50.7883   0.0099361 \\
90.6731   0.0031823 \\
153.862   0.00098225 \\
};

\addplot [color=black,solid,line width=0.5pt]
  table[row sep=crcr]{
 90.6731     0.0058 \\
 153.862     0.0058 \\
 153.862     2e-3 \\
 90.6731     0.0058 \\ 
};
\node[right, align=left, text=black, font=\footnotesize]
at (axis cs:153.862,0.004) {2};  

\node[anchor=north west] at (rel axis cs:0.75,0.98) {\textbf{(iii)}};

\end{axis}
\end{tikzpicture}
\end{subfigure}
\begin{subfigure}[b]{0.32\textwidth}
\begin{tikzpicture}
\begin{axis}[%
width=4.3cm,
height=3.8cm,
at={(0\textwidth,0\textwidth)},
scale only axis,
xmode=log,
xmin=3,
xmax=250,
xminorticks=true,
ymode=log,
ymin=3e-5,
ymax=0.2,
yminorticks=true,
legend style={draw=none,fill=none,legend cell align=left},
]

\addplot [color=myred,solid,line width=0.75pt, mark size=3pt,mark=+, mark options={color=myred}]
  table[row sep=crcr]{
4.0832   0.098175 \\
5.7043   0.050858 \\
8.7622   0.017226 \\
16.4893   0.0052309 \\
28.7396   0.0017176 \\
50.7883   0.0005365 \\
90.6731   0.00017143 \\
153.862   5.2941e-05 \\
};

\addplot [color=myblue,solid,line width=0.75pt, mark size=3pt,mark=o,mark options={color=myblue}]
  table[row sep=crcr]{
4.0832   0.098175 \\
5.7043   0.050858 \\
8.7622   0.017226 \\
16.4893   0.0052309 \\
28.7396   0.0017176 \\
50.7883   0.0005365 \\
90.6731   0.00017143 \\
153.862   5.2941e-05 \\
};

\addplot [color=mygreen,solid,line width=0.75pt, mark size=3pt,mark=x,mark options={color=mygreen}]
  table[row sep=crcr]{
4.0832   0.098175 \\
5.7043   0.050858 \\
8.7622   0.017226 \\
16.4893   0.0052309 \\
28.7396   0.0017176 \\
50.7883   0.0005365 \\
90.6731   0.00017143 \\
153.862   5.2941e-05 \\
};

\addplot [color=black,solid,line width=0.5pt]
  table[row sep=crcr]{
 90.6731     2.5915e-04 \\
 153.862     2.5915e-04 \\
 153.862     9e-5 \\
 90.6731     2.5915e-04 \\ 
};
\node[right, align=left, text=black, font=\footnotesize]
at (axis cs:153.862,1.7e-4) {2};  

\node[anchor=north west] at (rel axis cs:0.75,0.98) {\textbf{(iii)}};

\end{axis}
\end{tikzpicture}
\end{subfigure}
\begin{subfigure}[b]{0.32\textwidth}
\begin{tikzpicture}
\begin{axis}[%
width=4.3cm,
height=3.8cm,
at={(0\textwidth,0\textwidth)},
scale only axis,
xmode=log,
xmin=3,
xmax=250,
xminorticks=true,
ymode=log,
ymin=2e-9,
ymax=8e-5,
yminorticks=true,
legend style={draw=none,fill=none,legend cell align=left},
]

\addplot [color=myred,solid,line width=0.75pt, mark size=3pt,mark=+, mark options={color=myred}]
  table[row sep=crcr]{
4.0832   4.629e-05 \\
5.7043   6.0256e-06 \\
8.7622   1.3076e-06 \\
16.4893   3.6202e-07 \\
28.7396   1.0278e-07 \\
50.7883   3.4444e-08 \\
90.6731   1.5346e-08 \\
153.862   4.3779e-09 \\
};

\addplot [color=myblue,solid,line width=0.75pt, mark size=3pt,mark=o,mark options={color=myblue}]
  table[row sep=crcr]{
4.0832   4.629e-05 \\
5.7043   6.0256e-06 \\
8.7622   1.3076e-06 \\
16.4893   3.6202e-07 \\
28.7396   1.0278e-07 \\
50.7883   3.4444e-08 \\
90.6731   1.5346e-08 \\
153.862   4.3778e-09 \\
};

\addplot [color=mygreen,solid,line width=0.75pt, mark size=3pt,mark=x,mark options={color=mygreen}]
  table[row sep=crcr]{
4.0832   4.629e-05 \\
5.7043   6.0256e-06 \\
8.7622   1.3076e-06 \\
16.4893   3.6202e-07 \\
28.7396   1.0278e-07 \\
50.7883   3.4444e-08 \\
90.6731   1.5346e-08 \\
153.862   4.3776e-09 \\
};

\addplot [color=black,solid,line width=0.5pt]
  table[row sep=crcr]{
 90.6731     2.5915e-08 \\
 153.862     2.5915e-08 \\
 153.862     9e-9 \\
 90.6731     2.5915e-08 \\ 
};
\node[right, align=left, text=black, font=\footnotesize]
at (axis cs:153.862,1.9e-8) {2};  

\node[anchor=north west] at (rel axis cs:0.75,0.98) {\textbf{(iii)}};

\end{axis}
\end{tikzpicture}
\end{subfigure}
\end{subfigure}
\vspace{-0.1cm}

\begin{subfigure}[b]{1\textwidth}
\begin{subfigure}[b]{0.32\textwidth}
\begin{tikzpicture}
\begin{axis}[%
width=4.3cm,
height=3.8cm,
at={(0\textwidth,0\textwidth)},
scale only axis,
xmode=log,
xmin=3,
xmax=250,
xminorticks=true,
ymode=log,
ymin=5e-4,
ymax=3,
yminorticks=true,
legend style={draw=none,fill=none,legend cell align=left},
legend pos=outer north east,
]

\addplot [color=myred,solid,line width=0.75pt, mark size=3pt,mark=+, mark options={color=myred}]
  table[row sep=crcr]{
4.0832   1.8614 \\
5.7043   0.93285 \\
8.7622   0.31197 \\
16.4893   0.097144 \\
28.7396   0.031693 \\
50.7883   0.0099361 \\
90.6731   0.0031823 \\
};

\addplot [color=myblue,solid,line width=0.75pt, mark size=3pt,mark=o,mark options={color=myblue}]
  table[row sep=crcr]{
4.0832   1.8614 \\
5.7043   0.93285 \\
8.7622   0.31197 \\
16.4893   0.097144 \\
28.7396   0.031693 \\
50.7883   0.0099361 \\
90.6731   0.0031823 \\
153.862   0.00098225 \\
};

\addplot [color=mygreen,solid,line width=0.75pt, mark size=3pt,mark=x,mark options={color=mygreen}]
  table[row sep=crcr]{
4.0832   1.8614 \\
5.7043   0.93285 \\
8.7622   0.31197 \\
16.4893   0.097144 \\
28.7396   0.031693 \\
50.7883   0.0099361 \\
90.6731   0.0031823 \\
153.862   0.00098225 \\
};

\addplot [color=black,solid,line width=0.5pt]
  table[row sep=crcr]{
 90.6731     0.0058 \\
 153.862     0.0058 \\
 153.862     2e-3 \\
 90.6731     0.0058 \\ 
};
\node[right, align=left, text=black, font=\footnotesize]
at (axis cs:153.862,0.004) {2};  

\node[anchor=north west] at (rel axis cs:0.75,0.98) {\textbf{(iv)}};

\end{axis}
\end{tikzpicture}
\end{subfigure}
\begin{subfigure}[b]{0.32\textwidth}
\begin{tikzpicture}
\begin{axis}[%
width=4.3cm,
height=3.8cm,
at={(0\textwidth,0\textwidth)},
scale only axis,
xmode=log,
xmin=3,
xmax=250,
xminorticks=true,
ymode=log,
ymin=4e-8,
ymax=2e-4,
yminorticks=true,
legend style={draw=none,fill=none,legend cell align=left},
]

\addplot [color=myred,solid,line width=0.75pt, mark size=3pt,mark=+, mark options={color=myred}]
  table[row sep=crcr]{
4.0832   8.4679e-05 \\
5.7043   4.3343e-05 \\
8.7622   1.4367e-05 \\
16.4893   4.1158e-06 \\
28.7396   1.2429e-06 \\
50.7883   3.7804e-07 \\
90.6731   1.5855e-07 \\
};

\addplot [color=myblue,solid,line width=0.75pt, mark size=3pt,mark=o,mark options={color=myblue}]
  table[row sep=crcr]{
4.0832   8.4679e-05 \\
5.7043   4.3343e-05 \\
8.7622   1.4367e-05 \\
16.4893   4.1158e-06 \\
28.7396   1.2429e-06 \\
50.7883   3.7804e-07 \\
90.6731   1.5855e-07 \\
153.862   6.0741e-08 \\
};

\addplot [color=mygreen,solid,line width=0.75pt, mark size=3pt,mark=x,mark options={color=mygreen}]
  table[row sep=crcr]{
4.0832   8.4679e-05 \\
5.7043   4.3343e-05 \\
8.7622   1.4367e-05 \\
16.4893   4.1158e-06 \\
28.7396   1.2429e-06 \\
50.7883   3.7804e-07 \\
90.6731   1.5855e-07 \\
153.862   6.0741e-08 \\
};

\addplot [color=black,solid,line width=0.5pt]
  table[row sep=crcr]{
 90.6731     2.8794e-07 \\
 153.862     2.8794e-07 \\
 153.862     1e-7 \\
 90.6731     2.8794e-07 \\ 
};
\node[right, align=left, text=black, font=\footnotesize]
at (axis cs:153.862,2e-7) {2};  

\node[anchor=north west] at (rel axis cs:0.75,0.98) {\textbf{(iv)}};

\end{axis}
\end{tikzpicture}
\end{subfigure}
\begin{subfigure}[b]{0.32\textwidth}
\begin{tikzpicture}
\begin{axis}[%
width=4.3cm,
height=3.8cm,
at={(0\textwidth,0\textwidth)},
scale only axis,
xmode=log,
xmin=3,
xmax=250,
xminorticks=true,
ymode=log,
ymin=5e-8,
ymax=2e-4,
yminorticks=true,
legend style={draw=none,fill=none,legend cell align=left},
]

\addplot [color=myred,solid,line width=0.75pt, mark size=3pt,mark=+, mark options={color=myred}]
  table[row sep=crcr]{
4.0832   0.0001129 \\
5.7043   5.7788e-05 \\
8.7622   1.9154e-05 \\
16.4893   5.4849e-06 \\
28.7396   1.6547e-06 \\
50.7883   5.0179e-07 \\
90.6731   2.091e-07 \\
};

\addplot [color=myblue,solid,line width=0.75pt, mark size=3pt,mark=o,mark options={color=myblue}]
  table[row sep=crcr]{
4.0832   0.0001129 \\
5.7043   5.7788e-05 \\
8.7622   1.9154e-05 \\
16.4893   5.4849e-06 \\
28.7396   1.6547e-06 \\
50.7883   5.0178e-07 \\
90.6731   2.091e-07 \\
153.862   7.9449e-08 \\
};

\addplot [color=mygreen,solid,line width=0.75pt, mark size=3pt,mark=x,mark options={color=mygreen}]
  table[row sep=crcr]{
4.0832   0.0001129 \\
5.7043   5.7788e-05 \\
8.7622   1.9154e-05 \\
16.4893   5.4849e-06 \\
28.7396   1.6547e-06 \\
50.7883   5.0178e-07 \\
90.6731   2.091e-07 \\
153.862   7.9449e-08 \\
};

\addplot [color=black,solid,line width=0.5pt]
  table[row sep=crcr]{
 90.6731     4.3191e-07 \\
 153.862     4.3191e-07 \\
 153.862     1.5e-7 \\
 90.6731     4.3191e-07 \\ 
};
\node[right, align=left, text=black, font=\footnotesize]
at (axis cs:153.862,3.5e-7) {2};  

\node[anchor=north west] at (rel axis cs:0.75,0.98) {\textbf{(iv)}};
\end{axis}

\end{tikzpicture}
\end{subfigure}
\end{subfigure}
\caption{Robustness test: computed errors in $dG$-norms versus $1/h$ (\textit{log-log} scale) for the displacement field (left column), pressure field (middle column), and temperature field (right column) for test case $\mathbf{(i)}$ (first row), test case $\mathbf{(ii)}$ (second row), test case $\mathbf{(iii)}$ (third row), and test case $\mathbf{(iv)}$ (fourth row).}
\label{fig:Rob}
\end{figure}
\end{center}